\newtheorem{theorem}{Theorem}
\newtheorem{lemma}{Lemma}[section]
\newtheorem{proposition}{Proposition}[section]
\newtheorem{definition}{Definition}[section]
\newtheorem{corollary}{Corollary}[section]
\newtheorem{example}{Example}[section]
\newtheorem{acknowledgment*}{Acknowledgment}
\newtheorem{remark}{Remark}[section]
\numberwithin{equation}{section}
\newcommand{\ccC}{{\cal C}}
\newcommand{\ooM}{{\cal M}_1^+}
\newcommand{\oMj}{{\cal M}^{+,N}}
\newcommand{\D}{{\cal D}}
\newcommand{\oH}{\overline{H}}
\newcommand{\oM}{{\cal M}^+}
\newcommand{\cD}{{\cal D}}
\newcommand{\mm}{{\cal M}_0}
\newcommand{\be}{\begin{equation}}
\newcommand{\ee}{\end{equation}}
\newcommand{\bd}{\begin{displaymath}}
\newcommand{\ed}{\end{displaymath}}
\newcommand{\uE}{\underline{E}}
\newcommand{\cC}{{\cal C}}
\newcommand{\eps}{\varepsilon}
\newcommand{\hC}{\widehat{\cal C}}
\newcommand{\R}{\mathbb R}
\renewcommand{\vec}[1]{\boldsymbol{#1}}
\begin{document}
 \Large  \begin{center}{\bf Limit Theorems for Optimal Mass Transportation}\end{center} \normalsize
\begin{center} G. Wolansky\footnote{This research was supported by M. \& M. Bank Mathematics Res. Fund and by the  Israel Science Foundation }\\ Department of Mathematics, \\
Technion, Haifa 32000, Israel \end{center}
\begin{abstract}
The optimal mass transportation was introduced by  Monge some 200 years ago and is, today, the source of large number of
results in analysis, geometry and convexity.  Here I investigate a new, surprising link between optimal transformations obtained by different Lagrangian actions on Riemannian manifolds.  As a special case, for any pair of non-negative  measures $\lambda^+,\lambda^-$  of equal mass
$$ W_1(\lambda^-, \lambda^+)= \lim_{\eps\rightarrow 0} \eps^{-1}\inf_{\mu} W_p(\mu+\eps\lambda^-, \mu+\eps\lambda^+)$$
where $W_p$, $p\geq 1$  is the Wasserstein distance and  the infimum is over the set of probability measures in the ambient space.
\end{abstract}

\section{Introduction}
The Wasserstein metric $W_p$  ($\infty>p\geq 1$) is a useful distance  on the set of positive Borel measures on metric spaces. Given a metric space $(M, D)$ and a pair of positive Borel measures $\lambda^\pm$ on $M$  satisfying $\int_M d\lambda^+=\int_M d\lambda^-$:
 \be\label{wasserstein}W_p(\lambda^+,\lambda^-):= \inf_\pi\left\{ \left[ \int_M\int_M D^p(x,y) d\pi(x,y)\right]^{1/p} ; \ \ \pi\in {\cal P}(\lambda^+, \lambda^-)\right\} \ , \ee
where ${\cal P}(\lambda^+, \lambda^-)$ stands for the set of all positive Borel measures  on $M\times M$ whose $M-$marginals are  $\lambda^+, \lambda^-$.
\par
Under fairly general conditions (e.g if $M$ is compact),
   a minimizer $\pi^0\in {\cal P}(\lambda^+,\lambda^-)$ of (\ref{wasserstein}) exists. Such minimizers are called {\it  optimal plans}.   I'll  assume in this paper that $M$ is a compact Riemannian manifold and $D$ is a metric related (but not necessarily identical) to the geodesic distance.
\par
If in addition $\lambda^+$ satisfies certain regularity conditions,  the optimal measure $\pi^0$ is supported on a graph of a Borel mapping $\Psi:M\rightarrow M$.  By some abuse of notation we call a Borel map  $\Psi$  {\it an optimal plan} if it is a minimizer of
$$ W_p(\lambda^+,\lambda^-)=\inf_\Phi\left\{  \left[\int D^p(x, \Phi(x)) d\lambda^+\right]^{1/p} \ \ ; \ \ \Phi_\#\lambda^+=\lambda^-\right\}  \  $$
(see Section~\ref{notation}-\ref{diez} for notation).
\par
The metric $W_p$, $p\geq 1$ is a  metrization of the weak topology $C^*(M)$ on positive Borel measures. In particular, it  is continuous in the weak topology.  Thus, it is possible to approximate $W_p(\lambda^+, \lambda^-)$ (and the corresponding optimal plan) by  $W_p(\lambda^+_N,\lambda^-_N)$ on the set of {\it atomic measures}
 \be\label{m+1at}\lambda^\pm_N\in\oMj:=\left\{  \mu =\sum_{i=1}^N m_i\delta_{(x_i)} \ \ \ , m_i\geq 0, \ \ \ x_i\in M \ \right\} \  \  , \  N\rightarrow\infty  \ee reducing (\ref{wasserstein}) into a  {\it finite-dimensional linear programming} on the set of non-negative $N\times N$ matrices $\{{\cal P}_{i,j}\}$ subjected to linear constraints.
\par
There is, however, a sharp distinction between the case $p>1$ and $p=1$. If $p>1$ then the optimal plan $\pi^0$ is unique (for regular $\lambda^+$). This is, in general, {\it not the case} for $p=1$.
Another distinctive feature of the case $p=1$ is its "pinning property": The distance $W_1$ depends only on the difference $\lambda:=\lambda^+-\lambda^-$. This  is manifested by the alternative, dual formulation of $W_1$:
\be\label{w1infi} W_1(\lambda)= \sup_\phi\left\{ \int\phi d\lambda \ \ ; \ \|\phi\|_{Lip}\leq1\right\}\ee
where $\|\phi\|_{Lip}:= \sup_{x\not= y\in M}\left(\phi(x)-\phi(y)\right)/D(x,y)$.
\par\noindent
The optimal potential $\phi$ yields some partial information on the optimal plan $\Psi$ (if exists). In particular, $\nabla \phi(x)$, whenever exists, only indicates {\it the direction} of the optimal plan.
For example, if the metric $D$ is Euclidean, then
 $\Psi(x)=x+t(x)\nabla\phi(x)$ for some  unknown $t(x)\in \R^+$. This is in contrast to the case $p>1$ where a dual variational formulation, analogous to (\ref{w1infi}),    yields the {\it complete information} on the optimal plan $\Psi$ in terms of the gradient of some potential $\phi$.
\par
In this paper I consider an object called  the  $p-$Wasserstein distance $(p>1)$ of $\lambda^+$ to $\lambda^-$, {\it conditioned on a probability measure $\mu$}:
 \be\label{w1phi}W^{(p)}(\lambda\|\mu):= \sup_\phi\left\{ \int \phi d\lambda \ \ ; \ \ \int |\nabla\phi|^q d\mu\leq 1 \right\}\ee
where $q=p/(p-1)$.
\par
The first result  is
\be\label{w1mu} W_1(\lambda)=\min_\mu\left\{ W^{(p)}(\lambda\|\mu) \ \ ; \ \ \int d\mu=1 \right\} \ \ \ , \ (p>1) \ \ee
The problem associated with (\ref{w1mu})  is  related to {\it shape optimization}, see \cite{bu1}. In addition, the minimizer $\mu$ in (\ref{w1mu}) and the corresponding maximizer $\phi$ in (\ref{w1phi}) or (\ref{w1infi}) play
an important rule in the $L_1$ theory of transport  \cite{wil}. In fact, the optimal $\phi$ is, in general, a Lipschitz function which is differentiable $\mu$ a.e.  and satisfies $|\nabla\phi|=1 $  $\mu$ a.e. The minimal measure $\mu$ is called a {\it transport measure}. It verifies the weak form of the continuity equation which, under the current notation,  takes the form
$$ \nabla\cdot (\mu\nabla\phi)= \frac{\lambda}{ W_1(\lambda)} \ . $$
The transport measure yields an additional information on the optimal plan $\Psi$  along the {\it transport rays}  which completes  the information included in $\nabla\phi$ \cite{wil}.  In the context of shape optimization it is related to the optimal distribution of conducting material  \cite{bu1}. See also  \cite{I}, \cite{S1}, \cite{S2}.
\par
The evaluation of the transport measure $\mu$ is therefore an important object of study. It is tempting to approximate the transport measure as a minimizer of (\ref{w1mu}) on a restricted  finite space, e.g. for $\mu\in \oMj$ as defined in (\ref{m+1at}).
\par
 However, this cannot be done. Unlike $W_p$,  $W^{(p)}(\lambda\|\mu)$ is {\it not} continuous in the weak topology of $C^*$ on Borel measures with respect to both $\mu$ and $\lambda$.
  Indeed, it follows easily that $W^{(p)}(\lambda\|\mu)=\infty$ for any atomic measure $\mu$. \par
The second result  of this paper is
\be\label{Ww1pmu} W^{(p)}(\lambda\|\mu)= \lim_{n\rightarrow \infty} nW_p(\mu+\lambda^+/n, \mu+\lambda^-/n)\ee
Here the limit is in the sense of $\Gamma$ convergence. A somewhat stronger result is obtained if we take the infimum over all probability measures $\mu$:
\be\label{Ww1pmumin} W_1(\lambda)= \lim_{n\rightarrow \infty} n \min_{\mu} W_p(\mu+\lambda^+/n, \mu+\lambda^-/n) \ \ee
where the convergence is, this time,  pointwise in $\lambda$.
\par
The importance of (\ref{Ww1pmu}, \ref{Ww1pmumin}) is that $W^{(p)}(\lambda\|\mu)$ can now be approximated by a {\it weakly continuous function}
$$ W^{(p)}_{n}(\lambda^+, \lambda^-\|\mu):= nW_p(\mu+\lambda^+/n, \mu+\lambda^-/n) \ . $$
Suppose $\mu_0$ is a unique minimizer of (\ref{w1mu}).
If $\mu_n$ is a minimizer of $W^{(p)}_{n}(\lambda^+, \lambda^-\|\mu)$  then the sequence $\{\mu_n\}$ must converge to the transport measure $\mu_0$.
In contrast to $W^{(p)}$, $W^{(p)}_n$ {\it is continuous} in the $C^*$ topology with respect to $\mu$. Hence $\mu_n$ can be approximated by  atomic measures  $\mu_n^N\in\oMj$ (\ref{m+1at}).
  In particular a transport measure can be approximated  by  a finite {\it  points allocation}
obtained by minimizing $W^{(p)}_{n}$ on $\oMj$ for a sufficiently large $n$ and  $N$.
\par
The results (\ref{w1mu}- \ref{Ww1pmumin}) can be extended to the case where the cost $D^p$ on  $M\times M$  is generalized into an action function on a Riemannian manifold $M\times M$, induced by a Lagrangian function $l:TM\rightarrow \R$.  This point of view reveals some relations with the {\it Weak KAM Theory}
dealing with invariant measures of Lagrangian flows on manifolds.
\subsection{Overview}
Section~\ref{background}  review the necessary background for the Weak KAM and its relation to optimal transport.  Section~\ref{mainresults}  state the main results (Theorems~\ref{th1}-\ref{th6}), which correspond to   (\ref{w1mu}- \ref{Ww1pmumin}) for homogeneous Lagrangian on
$M\times M$.
Section~\ref{proof1} presents the proof of  the first of the main results which generalizes (\ref{w1phi}). Finally, Section~\ref{th23} contains the proofs of the other main results which generalize (\ref{Ww1pmu}, \ref{Ww1pmumin}).
\subsection{Standing notations and assumptions}\label{notation}
\begin{enumerate}
\item\label{Mg}  $(M,g)$ is a compact, Riemannian Manifold and $D:M\times M\rightarrow \R^+$ is the geodesic distance.
\item \label{Pi} $TM$ (res. $T^*M$) the tangent (res. cotangent) bundle of $M$. The duality between $v\in T_xM$ and $p\in T_x^*M$ is denoted by $\langle \xi, v\rangle\in \R$. The projection $\Pi:TM\rightarrow M$ is the trivialization $\Pi(x,v)=x$. Likewise $\Pi^*:T^*M\rightarrow M$ is the trivialization $\Pi^*(x,\xi)=x$.
\item For any topological space $X$, ${\cal M}(X)$ is the set of Borel measures on $X$, ${\cal M}_0(X)\subset {\cal M}(X)$ the set of such measures which are perpendicular to the constants. ${\cal M}^+(X)\subset{\cal M}(X)$ the set
    of all non-negative measures in ${\cal M}$, and  $\ooM(X)\subset{\cal M}^+(X)$ the set of normalized (probability) measures. If $X=M$, the parameter $X$ is usually omitted.
    \item \label{diez} A Borel map $\Phi:X_1\rightarrow X_2$ induces a mapping $\Phi_\#: {\cal M}^+(X_1)\rightarrow {\cal M}^+(X_2)$ via
        $$ \Phi_\#(\mu_1)(A)= \mu_1(\Phi^{-1}(A))$$
        for any Borel set $A\subset X_2$.
        \item For any \label{kappa} $x,y\in M$ let ${\cal K}^T_{x,y}$ be the set of all absolutely continuous paths $\vec{z}:[0,T]\rightarrow M$ connecting $x$ to $y$, that is, $\vec{z}(0)=x$, $\vec{z}(T)=y$.
\item\label{kappadef} Given $\mu_1, \mu_2\in {\cal M}^+$, the set ${\cal P}(\mu_1,\mu_2)$ is defined as all the measures $\Lambda\in {\cal M}^+(M\times M)$ such that $\pi_{1, \#}\Lambda=\mu_1$ and $\pi_{2, \#}\Lambda=\mu_2$, where $\pi_i:{\cal M}\times{\cal M}\rightarrow{\cal M}$ defined by $\pi_1(x,y)=x$, $\pi_2(x,y)=y$.
\item \label{hamdes} An hamiltonian function $h\in C^2(T^*M; \R)$ is assumed to be strictly convex and super-linear in $\xi$ on the fibers $T^*_xM$, uniformly in  $x\in M$, that is
     $$ h(x,\xi) \geq -C+ \hat{h}(\xi) \ \ \text{where} \lim_{\|\xi\|\rightarrow\infty}\hat{h}(\xi)/\|\xi\|=\infty\ \ \ \ . $$
     The Lagrangian $l:TM\rightarrow \R$ is obtained by Legendre duality
     $$ l(x,v)=\sup_{\xi\in T^*_xM} \langle \xi, v\rangle-h(x,\xi)$$
     satisfies $l\in C^2(TM; \R)$, and is super linear on the fibers of $T_xM$ uniformly in $x$.
     \item\label{Exp} $Exp_{(l)}:TM\times \R\rightarrow M$ is the flow due to the Lagrangian $l$ on $M$, corresponding to the  Euler-Lagrange equation
         $$ \frac{d}{dt} l_v = l_x \ . $$
         For each $t\in\R$, $Exp^{(t)}_{(l)}:TM\rightarrow M$ is the exponential map at time $t$.

\end{enumerate}
\section{Background}\label{background}
 The weak version of Mather's theory \cite{ma}  deals with minimal invariant measures of  Lagrangians, and the corresponding Hamiltonians defined on a manifold $M$.  In this theory the concept of an orbit
$\vec{z}=\vec{z}(t):\R\rightarrow M$ is replaced by that of a {\it closed probability measure} on $TM$:

\be\label{closedm}{\cal M}^c_0:= \left\{ \nu\in {\cal M}^+_1(TM) \ ;  \ \ \int_{TM} l(x,v) d\nu(x,v) < \infty \ , \ \ \int_{TM}\langle d\phi, v\rangle d\nu =0\ \ \text{for  \ any} \ \phi\in C^1(M) \right\}\ . \ee

A minimal (or Mather)  measure $\nu_M\in {\cal M}^c_0$ is a minimizer of
\be\label{minlag} \inf_{\nu\in {\cal M}^c_0}\int_{TM} l(x,v) d\nu(x,v) := -\uE\  \ee
It can be shown (\cite{[Ba]}, \cite{[Fa]}, \cite{[P]}) that
 any minimizer of (\ref{minlag}) is invariant under the flow induced by the Euler-Lagrange equation on $TM$:
\be\label{eleq} \frac{d}{dt} \nabla_{\dot{x}} l(x, \dot{x}) = \nabla_x l(x,\dot{x}) \ . \ee
\par
There is also a dual formulation of (\ref{minlag}) \cite{f2}, \cite{wol1}:
\be\label{maxham} \sup_{\mu\in \ooM} \inf_{\phi\in C^1(M)} \int_M h(x, d\phi) d\mu=\uE \ , \ee
where the maximizer $\mu_M$ is the projection of a Mather measure $\nu_M$ on $M$.
 The ground energy level $\uE$, common to (\ref{minlag}, \ref{maxham}), admits several equivalent definitions.
Evans and Gomes (\cite{ev} \cite{eg1} \cite{eg2}) defined $\uE$ as the {\it effective hamiltonian value}
$$ \uE:= \inf_{\phi\in C^1(M)} \sup_{x\in M} h(x, d\phi) \ , $$
while the PDE approach to the WKAM theory  (\cite{f1}, \cite{f2}) defines $\uE$ as the minimal $E\in \R$ for which the Hamilton-Jacobi equation
$h(x, d\phi)=E $
admits a viscosity sub-solution on $M$. Alternatively $\uE$ is the {\it only} constant for which
$h(x, d\phi)=\uE$
admits a viscosity solution \cite{f0}.
There are other, equivalent definitions of $\uE$ known in the literature. We shall meet some of them below.
\begin{example}
\begin{description}
\item{i)} \ $l=l_K:= |v|^p/(p-1)$ where $p>1$. Here $\uE=0$ and  $\mu_M$ is the volume induced by the metric $g$.
\item{ii)}  $l(x,v)=(1/2)|v|^2 -V(x)$ where $V\in C^2(M)$ (mechanical Lagrangian) .  Then
$\uE=\max_{x\in M} V(x)$ and  $\mu_M$ of (\ref{maxham}) is supported at the points of maxima of $V$.
\item{iii)} \   $l(x,v)=l_K( v-\vec{W}(x))$ where $\vec{W}$ is a section in $TM$.  \\
    Then  (\ref{minlag}) implies $\uE\leq 0$. In fact, it can be shown that $\uE=0$ for {\it any} choice of $\vec{W}$.
    \item{iv)}  In general, if $\vec{P}$ is in the first cohomology of $M$ ($\mathbf{H}^1(M)$) then  $l \mapsto l(x,v)-\langle\vec{P}, v\rangle$ induced the hamiltonian $h\mapsto h(x, \xi+\vec{P})$ and  $\uE=\alpha(\vec{P})$ corresponds to the celebrated Mather ($\alpha$) function \cite{ma} on the cohomology $\mathbf{H}^1(M)$. See also \cite{S}.
\end{description}
\end{example}
The Monge problem of mass transportation, on the other hand, has a much longer history. Some years before the
the French revolution, Monge (1781) proposed to consider the minimal cost of transporting a given mass distribution to another, where the cost of transporting a unit of mass from point $x$ to $y$ is prescribed by a function $C(x,y)$.  In modern language, the Monge problem on a manifold $M$ is described as follows: Given a pair of Borel probability measures
$\mu_0, \mu_1$ on $M$, consider the set ${\cal K}(\mu_0,\mu_1)$ of all Borel mappings $\Phi:M\rightarrow M$ transporting $\mu_0$ to $\mu_1$, i.e
$$ \Phi\in {\cal K}(\mu_0, \mu_1) \Longleftrightarrow \Phi_\#\mu_0=\mu_1$$ and look for the one which minimize the {\it transportation cost}
\be\label{mongedef} \cC(\mu_0, \mu_1):= \inf_\Phi\left\{  \int_M C(x, \Phi(x)) d\mu_0(x) \ \ ; \ \ \Phi\in{\cal K}(\mu_0, \mu_1)\right\}  \ . \ee
In this generality, the set ${\cal K}(\mu_0, \mu_1)$ can be empty if, e.g., $\mu_0$ contains an atomic measure while $\mu_1$ does not, so
$C(\mu_0,\mu_1)=\infty$ in that case. In 1942, Kantorovich proposed a relaxation of this deterministic definition of the Monge cost. Instead of the (very nonlinear)  set ${\cal K}(\mu_0, \mu_1)$, he suggested to consider the set ${\cal P}(\mu_0,\mu_1)$ defined in section~\ref{notation}-(\ref{kappadef}).  Then, the definition of the Monge metric is relaxed into the linear optimization
\be\label{kantdef}\cC(\mu_0,\mu_1)= \min_{\Lambda}\left\{ \int_{M\times M} C(x,y) d\Lambda(x,y)  \ \ ; \ \ \Lambda\in {\cal P}(\mu_0, \mu_1)\right\}\ . \ee
\begin{example}
 The {\it Wasserstein} distance $W_p$ ($p\geq 1$) is obtained by the power $p$ of the metric $D$ induced by the Riemannian structure:
 \be\label{wasser}W_p(\mu_0,\mu_1) =   \min_{\Lambda} \left\{ \left[\int_{M\times M} D^p(x,y) d\Lambda(x,y)\right]^{1/p} \ ; \ \Lambda\in {\cal P}(\mu_0, \mu_1)\right\}\ee
 \end{example}
The advantage of this relaxed definition is that $C(\mu_0,\mu_1)$ is always finite, and that a minimizer of
(\ref{kantdef}) always exists by the compactness of the set ${\cal P}(\mu_0,\mu_1)$ in the weak topology $C^*(M\times M)$. If $\mu_0$ contains no atomic points then it can be shown that $C(\mu_0,\mu_1)'s$ given by  (\ref{mongedef}) and (\ref{kantdef}) coincide \cite{ampar}.
\par
The theory of Monge-Kantorovich (M-K) was developed in the last few decades in a countless number of publications. For  updated reference see \cite{wil}, \cite{vi1}. \footnote{ By convention, the name "Monge problem" is reserved for the metric cost, while "Monge-Kantorovich problem" is usually referred to general cost functions}

Returning now to WKAM, it was observed by Bernard and  Buffoni   (\cite{bb1}\cite{bb2}- see also \cite{wol1})  that the  minimal measure and the ground energy
can be expressed in terms of  the  M-K problem subjected to the cost function
induced by the Lagrangian (recall section~\ref{notation}-\ref{kappa})
 \be\label{CTdef}C_T(x,y):= \inf_{\vec{z}}\left\{ \int_0^T l\left(\vec{z}(s); \dot{\vec{z}}(s)\right) ds  \  \ , \
 \vec{z}\in {\cal K}^T_{x,y}\right\} \ , T>0 \ .  \ee
Then
$$ \cC_T(\mu):= \cC_T(\mu,\mu)=\min_{\Lambda} \left\{ \int_{M\times M} C_T(x,y) d\Lambda(x,y) \ \ ; \ \
\Lambda\in {\cal P}(\mu,\mu)\right\}  $$
 and
\be\label{mkmin} \min_{\mu} \left\{ \cC_T(\mu) \ ; \ \mu\in\ooM\right\}= -T\uE\ee
 where the minimizers of (\ref{mkmin}) coincide, for any $T>0$, with the projected Mather measure $\mu_M$ maximizing  (\ref{maxham}) \cite{bb2}.
The action $C_T$ induces a metric on the manifold $M$:
\be\label{dedef1} (x,y)\in M\times M \mapsto D_E(x,y)=\inf_{T>0} C_T(x,y) + TE \ .  \ee
\newpage
\begin{example} \ \begin{description}
\item{i)} For $l(x,v)=|v|^p/(p-1)$, $p>1$  we get  $C_T(x,y)= D(x,y)^p/(p-1)T^{p-1}$ while $D_E(x,y)= pE^{1-1/p}D_g(x,y)/(p-1)$ if $E\geq 0$, $D_E(x,y)=-\infty$ if $E<0$.
    \item{ii)}  $l(x,v)=(1/2)|v|^2 -V(x)$ where $V\in C^2(M)$ (mechanical Lagrangian) .  Then
    $D_E(x,y)$ is the geodesic distance induced by conformal equivalent metric $(M, (E-V)g)$ on $M$, where $E\geq\uE= \sup_M V$.
    \end{description}
\end{example}
It is not difficult to see that either $D_E(x,x)=0$ for any $x\in M$, or $D_E(x,y)=-\infty$ for any $x,y\in M$.
In fact, it follows (\cite{mane1}, \cite{cdi}) that $D_E(x,y)=-\infty$ for $E<\uE$ and $D_E(x,x)=0$ for $E\geq \uE$ and any $x,y\in M$.

 Let now  $\lambda^+\ , \lambda^-\in {\cal M}^+$   where  $\lambda:=\lambda^+-\lambda^-\in {\cal M}_0$, that is $\int_M d\lambda=0$. Let
   \be\label{mk}\D_E(\lambda):=\D_E(\lambda^+,\lambda^-)= \min_{\Lambda} \left\{ \int_{M\times M} D_E(x,y) d\Lambda(x,y)  \  \ ; \ \Lambda\in{\cal P}(\lambda)\right\}  \ee
  be the Monge distance of $\lambda^+$ and $\lambda^-$ with respect to the metric $D_E$. There is a dual formulation of $\D_E$ as follows: Consider the set
  ${\cal L}_E$ of $D_E$ Lipschitz functions on $M$:
  \be\label{LE} {\cal L}_E:= \left\{ \phi\in C(M) \ ; \ \ \phi(x)-\phi(y)\leq D_E(x,y) \ \ \forall \ x,y\in M\right\}\ee
  Then (see, e.g \cite{wil}, \cite{vi})
  \be\label{dualdefD} \D_E(\lambda)= \max_\phi\left\{\int_M \phi d\lambda \  \ ; \ \phi\in {\cal L}_E . \right\} \ee
\section{Description of the main results}\label{mainresults}
The object of this paper is to establish some relations between the action $C_T$ and a modified  action $\hC_T$ defined below.
\subsection{Unconditional action}
For given $\lambda\in\mm$ we generalize (\ref{closedm}) into
\be\label{closedmslambda}{\cal M}_\lambda:= \left\{ \nu\in {\cal M}^+_1(TM) \ ;  \ \ \int_{TM} l(x,v) d\nu(x,v) < \infty \ \ ; \ \ \int_{TM}\langle d\phi, v\rangle d\nu =\int_M \phi d\lambda \ \ \text{for  \ any} \ \phi\in C^1(M) \right\}\  \ee
and define
\be\label{minlag1} \hC(\lambda):= \inf_{\nu}\left\{\int_{TM} l(x,v) d\nu(x,v) \ \ ; \ \ \nu\in {\cal M}_\lambda \right\} \  . \ee

The modified action $\hC_T:\mm\rightarrow \R\cup\{\infty\}$, $T>0$  have several equivalent definitions as given in Theorem~\ref{th1} below:
\begin{theorem}\label{th1}  The following definitions are equivalent:
\begin{enumerate}
\item\label{1}$ \hC_T(\lambda):=T\hC\left(\frac{\lambda}{T}\right) \   . $
\item\label{hcT} $\hC_T(\lambda):= \min_{\mu}\sup_{\phi}\left\{ \int_M-Th(x, d\phi)d\mu + \phi d\lambda \ \ ; \ \ \mu\in\ooM  , \ \phi\in C^1(M) \right\} \ .  $
    \item\label{hcT1} $\hC_T(\lambda):= \max_{E\geq \uE}\left[ \D_E(\lambda)- ET\right] \ . $
\end{enumerate}
In addition if  $T_c:= D_{\uE}^{'+}(\lambda) <\infty$ then for $T\geq T_c$,
 $$ \hC_T(\lambda)= \hC_{T_c}(\lambda)- T\uE \ . $$
 In that case the minimizer
 $\mu_\lambda^T\in\ooM$of (\ref{hcT1}), $T>T_c$ is given by
 $$ \mu_\lambda^T= \frac{T_c}{T} \mu_\lambda^{T_c} + \left( 1-\frac{T_c}{T}\right)\mu_M  \ , $$
 where $\mu_M$ is the projected Mather measure.
\end{theorem}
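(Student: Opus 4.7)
The plan is to treat item~(\ref{1}) as the primal definition of $\hC_T$ (a rescaling of (\ref{minlag1})) and derive items~(\ref{hcT}) and~(\ref{hcT1}) from it by Fenchel--Kantorovich duality. Once representation~(\ref{hcT1}) is available, the stabilisation for $T\geq T_c$ and the convex--combination formula for $\mu_\lambda^T$ both follow from concavity of $E\mapsto D_E(\lambda)$ together with the primal interpretation of the optimum.

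To pass from~(\ref{minlag1}) to~(\ref{hcT}), introduce a Lagrange multiplier $\phi\in C^1(M)$ for the constraint $\int\langle d\phi,v\rangle\,d\nu=\int\phi\,d\lambda$ in~(\ref{closedmslambda}). The primal then reads $\inf_\nu\sup_\phi\bigl\{\int[l(x,v)-\langle d\phi,v\rangle]\,d\nu+\int\phi\,d\lambda\bigr\}$ with $\nu\in\ooM(TM)$. Swapping $\inf$ and $\sup$ by a Sion--type minimax, the inner infimum over probability measures on $TM$ collapses to the pointwise infimum $\inf_{(x,v)}[l(x,v)-\langle d\phi(x),v\rangle]=-\sup_x h(x,d\phi)$ by Legendre duality (item~\ref{hamdes}). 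Rewriting $\sup_x h(x,d\phi)=\sup_{\mu\in\ooM}\int h(x,d\phi)\,d\mu$, performing a further minimax exchange in $\mu$ and $\phi$, and restoring the factor $T$ via $\hC_T(\lambda)=T\hC(\lambda/T)$ produces~(\ref{hcT}).

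For~(\ref{hcT})$\Rightarrow$(\ref{hcT1}), exchange $\min_\mu$ and $\sup_\phi$ in~(\ref{hcT}) once more and note $\min_{\mu\in\ooM}[-T\int h(x,d\phi)\,d\mu]=-T\sup_x h(x,d\phi)$. Stratifying $\phi\in C^1(M)$ by the level $E:=\sup_x h(x,d\phi)$ and identifying $\{\phi:\sup_x h(x,d\phi)\leq E\}$, up to Lipschitz regularisation, with ${\cal L}_E$ of~(\ref{LE})---the standard viscosity--subsolution correspondence for the Hamilton--Jacobi inequality $h(x,d\phi)\leq E$---the inner supremum becomes $D_E(\lambda)$ by Kantorovich duality~(\ref{dualdefD}); the restriction $E\geq\uE$ is automatic since $D_E\equiv-\infty$ below $\uE$. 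For the stabilisation I use concavity of $E\mapsto D_E(\lambda)$, which itself follows from the concavity of $E\mapsto D_E(x,y)$ in~(\ref{dedef1}) (an infimum of affines) via~(\ref{dualdefD}). By concavity, the right derivative of $E\mapsto D_E(\lambda)-TE$ at $\uE$ equals $T_c-T\leq 0$ whenever $T\geq T_c$, so the function is non--increasing on $[\uE,\infty)$ and its maximum is attained at $E=\uE$, giving $\hC_T(\lambda)=D_{\uE}(\lambda)-T\uE$; comparing with the value $\hC_{T_c}(\lambda)=D_{\uE}(\lambda)-T_c\uE$ at $T=T_c$ yields the asserted stabilisation identity. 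Finally, for the formula for $\mu_\lambda^T$, I pass back to the primal: if $\nu^{T_c}\in{\cal M}_{\lambda/T_c}$ is optimal for $\hC(\lambda/T_c)$ and $\nu_M\in{\cal M}^c_0$ is a Mather measure realising~(\ref{minlag}), then $\nu^T:=(T_c/T)\nu^{T_c}+(1-T_c/T)\nu_M$ lies in ${\cal M}_{\lambda/T}$ and computes $T\int l\,d\nu^T=T_c\hC(\lambda/T_c)-(T-T_c)\uE=\hC_T(\lambda)$, so it is optimal; projecting by $\Pi_\#$ and invoking the primal--dual correspondence between optimisers of~(\ref{minlag1}) and~(\ref{hcT}) delivers the claimed decomposition of $\mu_\lambda^T$.

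The main technical obstacle is rigorously justifying the two minimax exchanges: probability measures on $TM$ are not weakly compact (because the fibres are not compact) and $\phi$ ranges over the unbounded space $C^1(M)$. Super--linearity of $l$ (item~\ref{hamdes}) confines minimising $\nu$ to a tight set, while truncation and Lipschitz regularisation allow the replacement of $C^1$ potentials by ${\cal L}_E$--functions; marrying these approximations with Sion's theorem, together with the viscosity--subsolution correspondence needed to identify $\{\phi:h(\cdot,d\phi)\leq E\}$ with ${\cal L}_E$, is where the bulk of the technical work will sit.
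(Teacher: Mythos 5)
Your handling of (\ref{1})$\leftrightarrows$(\ref{hcT}) is essentially the paper's argument: the same auxiliary functional ${\cal G}(\nu,\phi)=\int_{TM}(l-\langle d\phi,v\rangle)\,d\nu+\int_M\phi\,d\lambda$, the same two minimax exchanges (tightness of the sub-level sets of $\int l\,d\nu$ from superlinearity on one side, compactness of $\ooM$ on the other), and the same Legendre/Young collapse of the inner infimum. The divergence --- and the genuine gap --- is in (\ref{hcT})$\leftrightarrows$(\ref{hcT1}). You propose a further minimax exchange, stratify the $C^1$ potentials by the level $E=\sup_xh(x,d\phi)$, and then identify $\sup\left\{\int_M\phi\,d\lambda\ ;\ \phi\in C^1(M),\ h(\cdot,d\phi)\le E\right\}$ with $\D_E(\lambda)$ via Kantorovich duality (\ref{dualdefD}). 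One inclusion is the paper's Lemma~\ref{6.3} (a $C^1$ function with $\oH(\phi)\le E$ lies in ${\cal L}_E$), but the inequality you actually need, $\sup_{\phi\in{\cal L}_E\cap C^1}\int\phi\,d\lambda\ \ge\ \D_E(\lambda)$, is the density of $C^1$ subsolutions of $h(x,d\phi)\le E$ among $D_E$-Lipschitz functions. The extremal Kantorovich potentials have the form $x\mapsto\inf_y\left[D_E(x,y)+\psi(y)\right]$ and are merely Lipschitz; mollifying them on a general Riemannian manifold does not preserve the pointwise constraint $h(x,d\phi)\le E$ (note that the paper reserves the mollifier hypothesis ${\bf H_1}$ for Theorems~\ref{th3} and~\ref{th6} and does \emph{not} assume it in Theorem~\ref{th1}). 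Filling this step is essentially the Fathi--Siconolfi $C^1$-subsolution theorem, a deep result that is delicate precisely at and near the critical level $E=\uE$ --- the regime where the last clause of the theorem lives. You flag this as ``where the bulk of the technical work will sit,'' but no argument is indicated, and generic truncation/regularisation will not deliver it. The paper avoids the density question entirely: it builds, from an optimal plan $\Lambda^\lambda_E$, an explicit measure $\mu^E_\Lambda$ carried by re-parametrised minimising geodesics and weighted by $\sigma'_E$ (Definition~\ref{defmu}), and closes the loop through the inequality chain (\ref{last}) together with a careful analysis of the $E$-differentiability of $\D_E(\lambda)$ (Lemmas~\ref{mongk} and~\ref{2side}); that construction is also what produces the convex-combination formula for $\mu^T_\lambda$.

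Two smaller points. Your primal derivation of the final clause --- mixing an optimal $\nu^{T_c}\in{\cal M}_{\lambda/T_c}$ with a Mather measure, checking feasibility in ${\cal M}_{\lambda/T}$, and certifying optimality against the value given by (\ref{hcT1}) --- is sound once (\ref{hcT1}) is established, and is arguably cleaner than the paper's route through (\ref{part3}); note, however, that both your primal computation and your dual one give $\hC_T(\lambda)=\hC_{T_c}(\lambda)-(T-T_c)\uE$, which is what concavity of $E\mapsto\D_E(\lambda)$ actually yields (the displayed formula $\hC_{T_c}(\lambda)-T\uE$ in the statement appears to drop the $T_c\uE$ term). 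Finally, your appeal to ``the primal--dual correspondence'' to conclude that $\Pi_\#$ of an optimal $\nu$ is an optimal $\mu$ for (\ref{hcT}) should be made explicit: it rests on the disintegration identity $\inf\{{\cal G}(\nu,\phi)\,;\,\Pi_\#\nu=\mu\}={\cal F}(\mu,\phi)$ from the first part of the proof.
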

\begin{remark}
Note that $\D_E(\lambda)$ (\ref{mk}, \ref{dualdefD}) is a monotone non-decreasing  and concave function of $E$ while $\D_{\uE}(\lambda)>-\infty$ by definition. Hence the right-derivative of  $\D^{'+}_E(\lambda)$  as a function of $E$ is defined and positive (possibly $+\infty$ at $E=\uE$).
 \par
 \end{remark}
\begin{remark}
A special case of Theorem~\ref{th1} was introduced in \cite{wol2}.
\end{remark}
\par
 For the next result  we need a two technical  assumptions:

\vskip .2in\noindent
${\bf H_1}$ \ \ \ There exists a sequence of smooth, positive mollifiers $\delta_\eps: M\times M\rightarrow \R^+$ such that, for any $\phi\in C^0(M)$ (res. $\phi\in C^1(M)$)
$$ \lim_{\eps\rightarrow 0}\delta_\eps * \phi= \phi$$
where the convergence is in $C^0(M)$ (res. $C^1(M)$) and for any $\eps>0$ and $\phi\in C^1(M)$
$$  \delta_\eps * d\phi=d(\delta_\eps*\phi)  \ . $$
\par\noindent ${\bf H_2}$ \ \ \
For any $(x,p)\in T^*M$ and  $\eps>0$ there exists $\delta>0$ such that $h(x,\xi)-h(y,\xi_y)\leq \eps (h(x,\xi)+1)$  provided $D(x,y)<\delta$. Here $\xi_y$ is obtained  by parallel translation of $(x,\xi)$ to $y$.

 \begin{remark}
${\bf H_1}$ holds for  homogeneous spaces,  e.g the flat $d-$torus $\R^d/\mathbb{Z}^n$ or the sphere $\mathbb{S}^{d-1}=SO(d)/SO(1)$.
\\
${\bf H_2}$ holds, in particular, for any mechanical hamiltonian with continuous potential.
\end{remark}

\vskip .2in
\begin{theorem}\label{th3} Assume ${\bf H_1}+ {\bf H_2}$.
 For any $\lambda=\lambda^+-\lambda^-$ where $\lambda^\pm\in\ooM$, $$\hC_T(\lambda)=\lim_{\eps\rightarrow 0}\min_{\mu\in \ooM} \eps^{-1}\cC_{ \eps T}(\mu+\eps\lambda^-, \mu+\eps\lambda^+) \ . $$
\end{theorem}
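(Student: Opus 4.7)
The plan is to prove the equality by matching $\liminf$ and $\limsup$ bounds, with the bulk of the work on the upper bound.

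For the lower bound, I would start from a nearly-optimal pair $(\mu_\eps,\Lambda_\eps)$ realizing the right-hand side (up to $o(\eps)$), measurably select for each $(x,y)$ a curve $z_{x,y}\in {\cal K}^{\eps T}_{x,y}$ of action within $o(\eps)$ of $C_{\eps T}(x,y)$, and lift to the following measure on $TM$:
$$
\tilde{\nu}_\eps := \int_{M\times M}\frac{1}{\eps T}\int_0^{\eps T}\delta_{(z_{x,y}(s),\dot z_{x,y}(s))}\,ds\,d\Lambda_\eps(x,y).
$$
A direct computation gives $T\int l\,d\tilde{\nu}_\eps\leq \eps^{-1}\cC_{\eps T}(\mu_\eps+\eps\lambda^-,\mu_\eps+\eps\lambda^+)+o(1)$, while telescoping the integral of $\langle d\phi,\dot z_{x,y}\rangle$ along each curve (and using the known marginals of $\Lambda_\eps$) yields $\int\langle d\phi,v\rangle\,d\tilde{\nu}_\eps = T^{-1}\int\phi\,d\lambda$, \emph{independently of} $\eps$. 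Super-linearity of $l$ provides tightness on $TM$; along a weakly convergent subsequence $\tilde{\nu}_\eps\weakly\tilde{\nu}$ (total mass $1+\eps\to 1$), the constraint passes to the limit since $\langle d\phi,v\rangle$ has linear growth dominated by $l$ at infinity, so $\tilde{\nu}\in {\cal M}_{\lambda/T}$. Weak lower semicontinuity then gives $\liminf \eps^{-1}\cC_{\eps T}\geq T\int l\,d\tilde\nu\geq T\hC(\lambda/T)=\hC_T(\lambda)$.

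For the upper bound, I would fix $\eta>0$, pick $\nu\in {\cal M}_{\lambda/T}$ with $T\int l\,d\nu\leq \hC_T(\lambda)+\eta$, invoke $\mathbf{H}_1$ to mollify $\nu$ into a smooth, compactly supported measure on $TM$ that preserves the divergence constraint and worsens the action by at most $\eta$, set $\mu:=\Pi_\#\nu$, and propose the plan
$$
\Lambda_\eps := (\Pi,\Pi\circ\Psi_{\eps T})_\#\nu + \eps(\mathrm{id},\mathrm{id})_\#\lambda^-,
$$
where $\Psi_s$ denotes the Euler--Lagrange flow on $TM$ lifting $\text{Exp}^{s}_{(l)}$. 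The first marginal is exactly $\mu+\eps\lambda^-$. The second is $\Pi\Psi_{\eps T\#}\nu+\eps\lambda^-$, and Taylor expansion of $\phi\circ\Pi\Psi_s$ combined with the divergence constraint gives $\Pi\Psi_{\eps T\#}\nu = \mu+\eps\lambda+O(\eps^2)$, the $O(\eps^2)$ residue being localized within distance $O(\eps)$ of $\mathrm{supp}(\mu)$. I would repair this $O(\eps^2)$ discrepancy by an auxiliary short-range transport whose $C_{\eps T}$-cost is $O(\eps^3)$. The main flow contribution is bounded by using the Euler--Lagrange trajectories themselves as transport paths, so that $\int C_{\eps T}(x,\Pi\Psi_{\eps T}(x,v))\,d\nu\leq \int_0^{\eps T}\!\int l(\Psi_s)\,d\nu\,ds$; by $\mathbf{H}_2$ and continuity of $l$ along short trajectories, the inner integral collapses to $\eps T\int l\,d\nu\,(1+o(1))$. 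Collecting, $\eps^{-1}\cC_{\eps T}\leq \hC_T(\lambda)+2\eta+o(1)$, and sending $\eps\to 0$ then $\eta\to 0$ finishes the bound.

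The hard part will be making the second-order target-marginal mismatch harmless. A crude correction bounded by the Monge--Kantorovich diameter would cost $O(\eps^{2-p})$ after division by $\eps$, which diverges for $p>2$; it is crucial that the error is not only $O(\eps^2)$ in mass but also supported at distance $O(\eps)$ from $\mu$, so that its repair needs only a short-range plan at $C_{\eps T}$-cost $O(\eps^3)$. This is precisely where both hypotheses bite: $\mathbf{H}_1$ lets me mollify $\nu$ so that $\Pi\circ\Psi_{\eps T}$ is a Borel map with uniformly bounded velocities on $\mathrm{supp}(\nu)$, and $\mathbf{H}_2$ provides the control on variations of $h$ (hence $l$) under parallel translation needed to absorb the curvature corrections along short trajectories into the $o(1)$ term.
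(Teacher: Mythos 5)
Your lower bound is correct and takes a genuinely different route from the paper: you argue on the primal side, lifting a near-optimal plan to an occupation measure $\tilde{\nu}_\eps$ on $TM$, using superlinearity of $l$ for tightness and lower semicontinuity to land in ${\cal M}_{\lambda/T}$, and then invoking Theorem~\ref{th1}-(\ref{1}). The paper instead gets the same inequality by pure duality (Lemma~\ref{lem60}): $C_T(x,y)+ET\geq D_E(x,y)$ gives $\ccC_T(\lambda^+,\lambda^-)\geq \D_E(\lambda)-ET$ for every $E\geq\uE$, and Theorem~\ref{th1}-(\ref{hcT1}) finishes. Both work; yours is self-contained at the level of measures, the paper's is shorter but leans on the already-proved equivalence of definitions.

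The upper bound, however, has a genuine gap in the repair step. The mismatch $\sigma_\eps:=\left(\Pi\circ\Psi_{\eps T}\right)_\#\nu-(\mu+\eps\lambda)$ is a mean-zero signed measure of total variation $O(\eps^2)$, but the property you invoke to make its repair cheap --- that it is supported within distance $O(\eps)$ of $\mathrm{supp}(\mu)$ --- is vacuous once $\nu$ is mollified so that $\mu$ has everywhere positive density ($\mathrm{supp}(\mu)=M$), and it is not the relevant property in any case. What the repair needs is that the positive part $\sigma_\eps^+$ can be matched to the negative part $\sigma_\eps^-$ over distances $o(\eps)$ (so that the $C_{\eps T}$-cost, which scales like $D^p/\eps^{p-1}$, is $o(\eps^2)\cdot$mass), and this fails in general: the second-order Taylor residue of the flow push-forward is a smooth mean-zero density whose positive and negative parts typically sit at distance $O(1)$ from each other, giving a repair cost of order $\eps^{2-p}$ after division by $\eps$ --- exactly the divergence you flag but do not actually avoid. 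The paper circumvents this in two different ways: in Lemma~\ref{lem63} (the smooth case, used for Theorem~\ref{th6}) it solves the Monge--Amp\`ere equation (\ref{comM}) exactly via the implicit function theorem, so the target marginal is matched with \emph{no} residue; and in Lemma~\ref{lastlema} (the actual upper bound for Theorem~\ref{th3}) it abandons the flow construction altogether, instead discretizing $D_E$ into $D^\eps_E(x,y)=\inf_n[C_{\eps nT}(x,y)+\eps nET]$, time-interpolating an optimal plan into steps of length $\eps T$, and assembling $\mu^{\eps,E}$ from the intermediate interpolants with $E$ tuned so that $\mu^{\eps,E}\in\ooM$ --- a construction in which the marginals match exactly by design and which requires no regularity of $\lambda^\pm$ whatsoever. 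A second, related gap: mollifying $\nu$ under ${\bf H_1}$ does not preserve the constraint relative to the \emph{original} $\lambda$ but only relative to $\delta_\eps*\lambda$, so for the atomic or singular $\lambda^\pm\in\ooM$ allowed by the theorem you still owe a diagonal argument in the mollification scale versus $\eps$, with the Taylor constants degrading as the scale shrinks; your proposal does not address this.
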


As an application of Theorem~\ref{th3} we may consider the case where the lagrangian $l$ is homogeneous  with respect to a Riemannian metric $g_{(x)}$:

\vskip .3in\noindent
\begin{example} \label{lh}If $l(x,v)= |v|^p/(p-1)$
where $p>1$.
Then $C_T(x,y) = \frac{D^p(x,y)}{(p-1)T^{p-1}}$ while \\ $D_E(x,y)= \frac{p}{p-1}E^{(p-1)/p}D(x,y)$   and $\uE=0$.
It follows that
\be\label{lhf} \hC_T(\lambda)= \frac{
W_1^p(\lambda)}{(p-1)T^{p-1}} , \ \ \ \eps^{-1}\cC_{ \eps T}(\mu+\eps\lambda^+, \mu+\eps\lambda^-)=\frac{
W_p^p(\mu+\eps\lambda^+, \mu+\eps\lambda^-)}{(p-1)T^{p-1}\eps^{p}} \ee
where the Wasserstein distance $W_p$  is defined in (\ref{wasser}).
Hence, by
Theorem~\ref{th1} and Theorem~\ref{th3}
$$ W_1(\lambda)=\lim_{\eps\rightarrow 0} \eps^{-1}\inf_{\mu\in\ooM} W_p(\mu+\eps\lambda^-, \mu+\eps\lambda^+) \ . $$
\end{example}
 \begin{remark}
 The optimal transport description of the weak KAM theory (\ref{mkmin}) can be considered as a special case of Theorem~\ref{th3} where $\lambda=0$. Indeed $\inf_{\mu\in\ooM} \eps^{-1}C_{\eps T}(\mu, \mu)=-T\uE$ by (\ref{mkmin}). On the other hand, since $\D_E(0)=0$ for any $E\geq\uE$ it follows that $T_c=0$, hence   $\hC_{T_c}(0)=0$ so $\hC_T(0)=-T\uE$ as well  by  the last part of Theorem~\ref{th1}.
 \end{remark}
\subsection{Conditional action}
There is also an interest in the definition of action (and metric distance) conditioned with a given probability measure $\mu\in\ooM$. We  introduce these definitions and reformulate parts of the main results Theorems~\ref{th1}-\ref{th3}
in terms of these.
\par
For a given $\mu\in\ooM$ and $E\geq \uE$, let
\be\label{HEmore} {\cal H}_E(\mu):= \left\{ \phi\in C^1(M) \ ; \ \ \int_M h(x, d\phi) d\mu \leq E \ \right\} \ . \ee
In analogy with (\ref{dualdefD}) we define the $\mu-${\it conditional metric} on  $\lambda\in {\cal M}_0$:
 \be\label{Demudef}\D_E(\lambda\|\mu):= \sup_\phi\left\{ \int_M \phi d\lambda  \ \ ; \ \ \phi\in {\cal H}_E(\mu)\right\}  \ . \ee

The {\em conditioned, modified action} with respect to $\mu\in\ooM$ is defined in analogy with Theorem~\ref{th1} (\ref{hcT}, \ref{hcT1})
\be\label{CTmudef} \hC_T(\lambda\|\mu):= \max_{E\geq \uE} \D_E(\lambda\|\mu)- ET\equiv \sup_{\phi\in C^1(M)} \int_M-Th(x, d\phi) d\mu + \phi d\lambda \ . \ee

\par\noindent
 \begin{example}\label{lh11} As in Example~\ref{lh},  $l(x,v)= |v|^p/(p-1)$ implies $h(\xi)=q^{-q}|\xi|^q$ where $q=p/(p-1)$. Then (\ref{HEmore}, \ref{Demudef})  is related  to (\ref{w1phi}), that is $W_1^{(p)}(\lambda\|\mu)=\cD_E(\lambda\|\mu)$
 where $E=q^{-q}$ or
  \be\label{lh11f}\D_E(\lambda\|\mu)=qE^{1/q}W_1^{(p)}(\lambda\|\mu) \ , \ \ \hC_T(\lambda\|\mu)=\frac{q-1}{T^{1/(q-1)}} \left( W_1^{(p)}(\lambda\|\mu)\right)^p\ee
 \end{example}

\begin{remark}
It seems there is a relation between this definition and the tangential gradient \cite{bu}. There are also possible applications to {\it optimal network} and {irrigation} theory, where one wishes to minimize $D(\lambda\|\mu)$ over some constrained set of $\mu\in\ooM$  (the irrigation network) for a prescribed $\lambda$ (representing the set of sources and targets).
See, e.g. \cite{bu2}, \cite{bcm} and the ref. within.
\end{remark}
The next result is
 \begin{theorem}\label{th5}
 For any $\lambda\in \mm$,
 $$ \D_E(\lambda)=\min_{\mu\in\ooM} \D_E(\lambda\|\mu) \ , \ \ \ \hC_T(\lambda)=\min_{\mu\in\ooM} \hC_T(\lambda\|\mu) \ .  $$
 \end{theorem}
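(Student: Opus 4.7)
My plan is to prove the second identity $\hC_T(\lambda) = \min_\mu \hC_T(\lambda\|\mu)$ by a minimax interchange plus a smoothing step, and then to recover the first identity $\D_E(\lambda) = \min_\mu \D_E(\lambda\|\mu)$ from it by Legendre duality in the energy variable. The functions $E\mapsto \D_E(\lambda)$ and $E\mapsto \D_E(\lambda\|\mu)$ are concave and nondecreasing on $[\uE,\infty)$, so the transforms appearing in (\ref{hcT1}) and (\ref{CTmudef}) can be inverted.

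Starting from the dual representation (\ref{CTmudef}),
\begin{displaymath}
\min_{\mu\in\ooM}\hC_T(\lambda\|\mu)=\min_{\mu\in\ooM}\sup_{\phi\in C^1(M)}\Bigl[\int_M\phi\,d\lambda-T\int_M h(x,d\phi)\,d\mu\Bigr].
\end{displaymath}
The expression in brackets is linear, hence weak-$*$ continuous, in $\mu$ on the compact convex set $\ooM$, and concave in $\phi$ because $h$ is convex in $\xi$. Sion's minimax theorem swaps $\min_\mu$ and $\sup_\phi$. For fixed $\phi\in C^1(M)$, the minimum in $\mu$ of $-T\int h(x,d\phi)\,d\mu$ equals $-T\max_{x\in M} h(x,d\phi(x))$, attained at a Dirac mass at any maximizer of $h(\cdot,d\phi)$. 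Stratifying the remaining supremum by the level $E:=\max_x h(x,d\phi(x))$, which lies in $[\uE,\infty)$ by the Evans--Gomes characterization of $\uE$, reduces the right-hand side to $\sup_{E\geq \uE}[A_E(\lambda)-ET]$, where $A_E(\lambda):=\sup\{\int\phi\,d\lambda : \phi\in C^1(M),\ h(x,d\phi)\leq E\ \text{everywhere}\}$.

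It then remains to identify $A_E(\lambda)$ with $\D_E(\lambda)$. The inequality $A_E(\lambda)\leq \D_E(\lambda)$ is standard: any $C^1$ classical subsolution of $h\leq E$ lies in ${\cal L}_E$, as one sees by integrating $\langle d\phi,\dot z\rangle \leq h(z,d\phi)+l(z,\dot z)\leq E+l(z,\dot z)$ along an absolutely continuous curve $z$ and passing to the infimum in the definition (\ref{dedef1}) of $D_E$. The reverse inequality is the genuinely analytic step: every continuous $D_E$-Lipschitz function must be approximated in $C^0(M)$ by $C^1$ strict subsolutions of $h(x,d\phi)\leq E+\eps_n$ with $\eps_n\to 0$. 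I would achieve this by mollification (available under hypothesis $\mathbf{H_1}$) combined with a small convex combination with a strict subsolution of $h\leq \uE+\delta$, in the spirit of the Fathi--Siconolfi regularization, to absorb the convexity error produced by mollifying $h$. Granted $A_E=\D_E$, (\ref{hcT1}) gives $\min_\mu\hC_T(\lambda\|\mu)=\sup_{E\geq\uE}[\D_E(\lambda)-ET]=\hC_T(\lambda)$.

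For the first identity, Legendre inversion in $E$ yields $\D_E(\lambda)=\inf_{T>0}[\hC_T(\lambda)+ET]$ and analogously $\D_E(\lambda\|\mu)=\inf_{T>0}[\hC_T(\lambda\|\mu)+ET]$, so exchanging $\min_\mu$ with $\inf_T$ reduces the claim to the conditional identity just established. The main obstacle is the smoothing identification $A_E=\D_E$: producing a $C^1$ classical subsolution at an arbitrarily small energy surplus from a merely continuous, possibly only viscosity-type, subsolution on a general compact Riemannian manifold is the real analytic content, whereas the minimax swap and the Legendre inversion are essentially bookkeeping.
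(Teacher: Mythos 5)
Your argument for the second identity is correct in outline but takes a genuinely different route from the paper. The paper obtains $\hC_T(\lambda)=\min_{\mu\in\ooM}\hC_T(\lambda\|\mu)$ in one line, because Theorem~\ref{th1}-(\ref{hcT}) already exhibits $\hC_T(\lambda)$ as $\min_\mu\sup_\phi\left[\int\phi\, d\lambda-T\int h(x,d\phi)\,d\mu\right]$ and (\ref{CTmudef}) names the inner supremum $\hC_T(\lambda\|\mu)$; all the work was done earlier, in the minimax (\ref{minmaxF}) and in the explicit construction of the optimizing measure $\mu_\lambda$ out of optimal plans and their geodesics (Definition~\ref{mulambda}, Lemma~\ref{lemma3.5}), which is what identifies the common value with $\max_{E\geq\uE}[\D_E(\lambda)-ET]$. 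You instead rerun the minimax directly (the same Sion argument as (\ref{minmaxF}), and fine as such), collapse $\min_\mu$ to $-T\oH(\phi)$, and then must identify $A_E(\lambda)=\sup\{\int\phi\, d\lambda:\ \phi\in C^1(M),\ h(x,d\phi)\leq E\}$ with $\D_E(\lambda)$. The inequality $A_E\leq\D_E$ is essentially Lemma~\ref{6.3}; the reverse is a density statement ($C^1$ subsolutions at level $E+\eps_n$ approximating ${\cal L}_E$ uniformly) that the paper never needs, precisely because it produces the minimizer $\mu_\lambda$ constructively rather than arguing through the worst-case $\mu$. Your route buys a more transparent weak-KAM explanation of the identity at the price of importing a regularization theorem. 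Your treatment of the first identity --- concavity of $\D_E(\lambda\|\mu)$ in $E$, the Legendre inversion $\D_E(\lambda\|\mu)=\min_{T>0}[\hC_T(\lambda\|\mu)+ET]$, and the exchange of the two minima --- is exactly the paper's proof.

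One caution on the density step, which you rightly single out as the real content of your approach: Theorem~\ref{th5} is stated without ${\bf H_1}$ or ${\bf H_2}$. If you regularize by mollification you need ${\bf H_1}$ to commute $\delta_\eps*$ with $d$, and you also need ${\bf H_2}$ (which you do not invoke) to control the error $\int_{M\times M}\left[h(x,d\phi(y))-h(y,d\phi(y))\right]\delta_\eps(x,y)\,d\mu(x)\,dy$ arising from the $x$-dependence of $h$ after Jensen's inequality --- this is exactly the estimate carried out in the proof of Lemma~\ref{lem61}. So the mollification route proves the theorem only under the extra hypotheses. Quoting the Fathi--Siconolfi regularization \cite{[Fa]} instead gives the density on a general compact manifold and closes the gap, but then that citation, not the minimax, is doing the heavy lifting.
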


\noindent
The analog of Theorem~\ref{th3} holds for the conditional action as well. However, we can only prove the $\Gamma-$convergence in that case. Recall that a sequence of functionals $F_n:{\bf X}_n\rightarrow \R\cup\{\infty\}$   is said to $\Gamma-$converge to $F: {\bf X}\rightarrow \R\cup\{\infty\}$ ($\Gamma-\lim_{n\rightarrow \infty} F_n=F$) if and only if
\begin{description}
\item{(i)} ${\bf X}_n\subset {\bf X}$ for any $n$.
\item{(ii)} For any sequence $x_n\in {\bf X}_n$ converging to $x\in {\bf X}$ in the topology of ${\bf X}$ ,
$$ \liminf_{n\rightarrow\infty} F_n(x_n) \geq F(x) \ . $$
\item{(iii)} For any $x\in {\bf X}$ there exists a sequence $\hat{x}_n\in {\bf X}_n$ converging to $x\in {\bf X}$ in the topology of ${\bf X}$ for which
    $$ \lim_{n\rightarrow\infty} F_n(\hat{x}_n) = F(x) \ . $$
\end{description}

In Theorem~\ref{th6} below the $\Gamma-$convergence is related to the special case where ${\bf X}_n= {\bf X}$:
 \begin{theorem}\label{th6}
 Let ${\bf X}_n=\mm\times \ooM= {\bf X}$ and $F_n(\lambda,\mu):= n\cC_{T/n}(\mu+\lambda^-/n, \mu+\lambda^+/n)$. Then
$$\hC_T(\cdot\|\cdot)=  \Gamma- \lim_{n\rightarrow \infty}F_n \  . $$
\end{theorem}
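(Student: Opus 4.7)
The plan is to verify the $\Gamma$-liminf and the existence of a recovery sequence separately on ${\bf X}=\mm\times\ooM$ equipped with the weak-$*$ product topology, with the dual representation
$$\hC_T(\lambda\|\mu)\ =\ \sup_{\phi\in C^1(M)}\Bigl[\int\phi\,d\lambda\,-\,T\!\int h(x,d\phi)\,d\mu\Bigr]$$
from (\ref{CTmudef}) serving as the bridge between $F_n$ and the continuous-time action $\hC_T(\cdot\|\cdot)$.

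For the liminf inequality, fix $\phi\in C^1(M)$ and a weakly convergent sequence $(\lambda_n,\mu_n)\to(\lambda,\mu)$. Let $\Lambda_n\in{\cal P}(\mu_n+\lambda_n^-/n,\mu_n+\lambda_n^+/n)$ be an optimal coupling for $\cC_{T/n}$, with associated minimizing curves $\vec{z}^n_{x,y}:[0,T/n]\to M$ realizing $C_{T/n}(x,y)$ as in (\ref{CTdef}). The pointwise Fenchel--Young inequality $l(z,\dot z)\geq\langle d\phi(z),\dot z\rangle-h(z,d\phi(z))$, integrated along each path, gives
$$C_{T/n}(x,y)\,\geq\,\phi(y)-\phi(x)\,-\,\int_0^{T/n}\! h\bigl(\vec{z}^n_{x,y}(s),d\phi(\vec{z}^n_{x,y}(s))\bigr)\,ds.$$
Integrating against $\Lambda_n$ and multiplying by $n$ yields
$$F_n(\lambda_n,\mu_n)\,\geq\,\int\phi\,d\lambda_n\,-\,T\!\int h(x,d\phi(x))\,d\bar\mu_n(x),$$
where $\bar\mu_n$ is the normalized time-occupation measure of the optimal paths. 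The crucial subclaim is $\bar\mu_n\to\mu$ weakly in $C^*(M)$: assuming $F_n$ stays bounded along a subsequence (else the bound is vacuous), the superlinearity of $l$ forces the average path displacement to vanish over time $T/n$, so $\bar\mu_n$ differs from its initial marginal $\mu_n+\lambda_n^-/n\to\mu$ by a vanishing amount in duality with continuous functions. Taking $\liminf_n$ and then $\sup_\phi$ delivers (ii).

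For a recovery sequence at a general $(\lambda,\mu)\in{\bf X}$, I would set $\hat\lambda_n:=\delta_{\eps_n}*\lambda$ and $\hat\mu_n:=\delta_{\eps_n}*\mu$ for a suitably slow $\eps_n\to 0$ (mollifiers guaranteed by ${\bf H_1}$), so that $(\hat\lambda_n,\hat\mu_n)\to(\lambda,\mu)$ weakly. Fix an $\eta$-near-maximizer $\phi\in C^1(M)$ of $\hC_T(\lambda\|\mu)$ and let $v(x):=\nabla_\xi h(x,d\phi(x))$ be its dual velocity field; the Euler--Lagrange equation of the dual gives $\nabla\!\cdot\!(\mu v)=-\lambda/T$ in the distributional sense. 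Construct an explicit transport plan from $\hat\mu_n+\hat\lambda_n^-/n$ to $\hat\mu_n+\hat\lambda_n^+/n$ \`a la Benamou--Brenier: linearly interpolate $\rho_s:=\hat\mu_n+(1-sn/T)\hat\lambda_n^-/n+(sn/T)\hat\lambda_n^+/n$ on $s\in[0,T/n]$, driven by a smooth vector field $v_n$ close to $v$ and satisfying the exact continuity condition $\nabla\!\cdot\!(\hat\mu_n v_n)=-\hat\lambda_n/T$ (solvable because $\hat\mu_n$ is smooth and strictly positive). Expanding the Benamou--Brenier cost via the Fenchel equality $l(x,v(x))=\langle d\phi,v\rangle-h(x,d\phi)$ and one integration by parts yields
$$F_n(\hat\lambda_n,\hat\mu_n)\,\leq\,\int\phi\,d\hat\lambda_n\,-\,T\!\int h(x,d\phi)\,d\hat\mu_n\,+\,o(1),$$
and weak convergence combined with ${\bf H_2}$ drives the right-hand side to $\int\phi\,d\lambda-T\!\int h\,d\mu\geq\hC_T(\lambda\|\mu)-\eta$. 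A standard diagonal argument in $\eta$ completes (iii).

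The principal obstacle is the calibration of the mollification scale $\eps_n$ against the discretization scale $T/n$: in the homogeneous case the correction transport from mollifying at scale $\eps_n$ has a $\cC_{T/n}$ cost of order $\eps_n^p(T/n)^{1-p}$, so after the outer multiplication by $n$ one must enforce $\eps_n=o(T/n)$ while still having $\phi$ and $d\phi$ resolved sharply enough to nearly attain the dual value $\hC_T(\lambda\|\mu)$. The commutation identity $\delta_\eps*d\phi=d(\delta_\eps*\phi)$ from ${\bf H_1}$ and the uniform modulus of continuity of $h$ from ${\bf H_2}$ are precisely what make this tradeoff feasible. The other delicate point is the weak convergence $\bar\mu_n\to\mu$ in the liminf step, which rests on an a priori velocity bound supplied by the superlinearity of the Lagrangian in (\ref{hamdes}).
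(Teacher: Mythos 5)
Your route is genuinely different from the paper's on both halves, and most of it is sound. For the $\Gamma$-liminf the paper does not integrate Fenchel--Young along minimizing curves; it uses Kantorovich duality together with the Lax--Oleinik semigroup, writing $\cC_{T/n}(\mu_1,\mu_2)=\sup_\phi\int\phi\,d\mu_2-\KO(\phi)_{(T/n,\cdot)}d\mu_1$ and the classical short-time expansion $\eps^{-1}[\KO(\phi)_{(\eps T,x)}-\phi(x)]\to -Th(x,d\phi)$ for $\phi\in C^1$ (Lemma~\ref{lem62}); this sidesteps your occupation measure $\bar\mu_n$ entirely and, incidentally, handles varying sequences $(\mu_n,\lambda_n)\weakly(\mu,\lambda)$ with no extra work since the expansion is uniform in $x$. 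Your occupation-measure argument is correct (superlinearity does give $\int L(x,y)\,d\Lambda_n\to 0$ for the path lengths $L$, hence $W_1(\bar\mu_n,\mu_n+\lambda_n^-/n)\to 0$), but it is the longer road. For the recovery sequence, the paper mollifies exactly as you do (Lemma~\ref{lem61}, using ${\bf H_1}$, ${\bf H_2}$) but then, in Lemma~\ref{lem63}, analyzes the \emph{actual optimal map} via a Monge--Amp\`ere expansion and the implicit function theorem, whereas you build an explicit Benamou--Brenier competitor; for an upper bound a competitor suffices, so your idea is in principle more economical. Note also that your ``principal obstacle'' of calibrating $\eps_n$ against $T/n$ is a red herring: the paper decouples the two scales by a plain diagonal argument (fix the mollification index $j$, let $n\to\infty$, then let $j\to\infty$), and no quantitative relation $\eps_n=o(T/n)$ is ever needed, since one never transports between the mollified and unmollified measures.

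The one genuine gap is in the Fenchel bookkeeping of your competitor. You want simultaneously (a) the \emph{exact} continuity equation $\nabla\cdot(\hat\mu_n v_n)=-\hat\lambda_n/T$, so that the linear interpolation is admissible and $n\cC_{T/n}\leq T\int l(x,v_n)\,d\hat\mu_n+o(1)$, and (b) the Fenchel \emph{equality} $l(x,v_n)=\langle d\phi,v_n\rangle-h(x,d\phi)$, so that integration by parts turns the cost into $\int\phi\,d\hat\lambda_n-T\int h(x,d\phi)\,d\hat\mu_n$. With $\phi$ only an $\eta$-near-maximizer of $\hC_T(\lambda\|\mu)$, its dual field $v=h_\xi(x,d\phi)$ satisfies the continuity equation only approximately, and any correction $v_n$ restoring (a) breaks (b): Young's inequality then goes the wrong way ($l(x,v_n)\geq\langle d\phi,v_n\rangle-h(x,d\phi)$), and the residual transport needed to fix the target marginal has cost of order $n^{p-2}$ after the outer multiplication by $n$, which diverges for $p>2$. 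The fix is to abandon the near-maximizer of the unmollified problem and instead take $\phi=\psi_n$ to be the \emph{exact} maximizer of $\hC_T(\hat\lambda_n\|\hat\mu_n)$, i.e. the solution of the quasi-linear elliptic equation $T\nabla\cdot(\hat\mu_n\, h_\xi(x,d\psi_n))=-\hat\lambda_n$ --- which is smooth by elliptic regularity precisely because $\hat\mu_n$ is smooth and strictly positive (this is the equation (\ref{eq0}) in the paper's Lemma~\ref{lem63}). Then (a) and (b) hold simultaneously, the competitor cost equals $\hC_T(\hat\lambda_n\|\hat\mu_n)+o(1)$, and Lemma~\ref{lem61} (which is where ${\bf H_1}$, ${\bf H_2}$ really enter) carries this to $\hC_T(\lambda\|\mu)$.
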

From Theorem~\ref{th6} and Theorem~\ref{th3} it follows immediately
\begin{corollary}\label{coradd}
In addition, if $\mu_n$ is a minimizer of $F_n$ in $\ooM$ then any converging subsequence of $\mu_n$, $n\rightarrow \infty$, converges to a minimizer of $\hC(\lambda\|\cdot)$ in $\ooM$.
\end{corollary}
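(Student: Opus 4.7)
The plan is to combine the two $\Gamma$-convergence ingredients, namely the $\liminf$ inequality from Theorem~\ref{th6} and the pointwise convergence of the infima supplied by Theorem~\ref{th3} (together with Theorem~\ref{th5}). First I would observe that, since $M$ is compact, $\ooM$ is compact in the weak-$*$ topology of $C^*(M)$, so any sequence of minimizers $\mu_n\in\ooM$ of $F_n(\lambda,\cdot)$ admits a subsequence $\mu_{n_k}\weakly\mu^*\in\ooM$; it suffices to show that every such weak-$*$ limit point $\mu^*$ is a minimizer of $\hC_T(\lambda\|\cdot)$.

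Next I would apply the $\liminf$ inequality (property (ii) in the definition of $\Gamma$-convergence recalled just before Theorem~\ref{th6}) to the sequence $(\lambda,\mu_{n_k})\in{\bf X}_{n_k}={\bf X}$ converging in the product topology to $(\lambda,\mu^*)$. This is immediate because $\lambda$ is held fixed; Theorem~\ref{th6} then yields
\[
\liminf_{k\to\infty} F_{n_k}(\lambda,\mu_{n_k})\;\geq\;\hC_T(\lambda\|\mu^*).
\]

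For the matching upper bound I would use that $\mu_n$ is a minimizer of $F_n(\lambda,\cdot)$, so $F_n(\lambda,\mu_n)=\min_{\mu\in\ooM}F_n(\lambda,\mu)$. Setting $\eps=1/n$ in Theorem~\ref{th3} and combining with the identity $\hC_T(\lambda)=\min_{\mu\in\ooM}\hC_T(\lambda\|\mu)$ from Theorem~\ref{th5} gives
\[
\lim_{n\to\infty} F_n(\lambda,\mu_n)\;=\;\lim_{n\to\infty}\min_{\mu\in\ooM} F_n(\lambda,\mu)\;=\;\hC_T(\lambda)\;=\;\min_{\mu\in\ooM}\hC_T(\lambda\|\mu).
\]
Chaining the two displays produces $\hC_T(\lambda\|\mu^*)\leq\min_{\mu\in\ooM}\hC_T(\lambda\|\mu)$, and the reverse inequality is automatic since $\mu^*\in\ooM$; hence $\mu^*$ is a minimizer.

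There is essentially no hard step here: the corollary is a textbook consequence of $\Gamma$-convergence on a fixed compact ambient space together with convergence of the infima. The only minor point requiring a word of care is that Theorem~\ref{th6} is stated for $(\lambda,\mu)$ jointly, so one has to notice that specializing to the constant sequence $\lambda_{n_k}\equiv\lambda$ is a legitimate use of the $\liminf$ condition and that no construction of a recovery sequence for $\mu^*$ is required, since property (iii) of $\Gamma$-convergence is invoked implicitly only through Theorem~\ref{th3} in providing the value of $\lim_n\min_\mu F_n(\lambda,\mu)$.
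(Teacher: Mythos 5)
Your argument is correct and is precisely the reasoning the paper intends when it says the corollary ``follows immediately'' from Theorems~\ref{th6} and~\ref{th3}: the $\liminf$ inequality of the $\Gamma$-convergence applied along a weak-$*$ convergent subsequence (using compactness of $\ooM$), combined with the convergence of the minimal values from Theorem~\ref{th3} and the identification $\hC_T(\lambda)=\min_{\mu}\hC_T(\lambda\|\mu)$ from Theorem~\ref{th5}. No discrepancy with the paper's approach.
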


  Finally, we note that (\ref{Ww1pmumin}) is a special case of Theorem~\ref{th6}.
 Using  Examples~\ref{lh}, \ref{lh11} with $\eps=1/n$, recalling $(q-1)^{-1}=p-1$ we obtain
 \begin{corollary}\label{corspecial}
 $$ W_1(\lambda)= \lim_{n\rightarrow\infty} n \min_{\mu\in\ooM} W_p(\mu+\lambda^+/n, \mu+\lambda^-/n)$$
 \end{corollary}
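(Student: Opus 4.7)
The plan is to derive Corollary~\ref{corspecial} as a direct specialization of Theorem~\ref{th3} to the homogeneous Lagrangian $l(x,v)=|v|^p/(p-1)$ of Example~\ref{lh}, with the reparametrization $\eps=1/n$. No fresh $\Gamma$-convergence analysis is required: Theorem~\ref{th3} already supplies the convergence of the infima, and Example~\ref{lh} gives explicit closed-form expressions of $\hC_T$ and $\cC_T$ in terms of $W_1$ and $W_p$, so the argument reduces to arithmetic.

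First I apply Theorem~\ref{th3} with $T=1$ to the difference $\lambda=\lambda^+-\lambda^-$, yielding
$$\hC_1(\lambda)=\lim_{\eps\to 0}\min_{\mu\in\ooM}\eps^{-1}\cC_\eps(\mu+\eps\lambda^-,\mu+\eps\lambda^+).$$
Next I substitute the two identities recorded in (\ref{lhf}): on the left $\hC_1(\lambda)=W_1^p(\lambda)/(p-1)$, and on the right
$$\eps^{-1}\cC_\eps(\mu+\eps\lambda^-,\mu+\eps\lambda^+)=\frac{W_p^p(\mu+\eps\lambda^-,\mu+\eps\lambda^+)}{(p-1)\eps^p}.$$
Setting $\eps=1/n$ turns the right-hand side into $n^p W_p^p(\mu+\lambda^+/n,\mu+\lambda^-/n)/(p-1)$; after cancelling the universal factor $1/(p-1)$ the identity becomes
$$W_1^p(\lambda)=\lim_{n\to\infty} n^p\min_{\mu\in\ooM} W_p^p(\mu+\lambda^+/n,\mu+\lambda^-/n).$$
Finally, since $x\mapsto x^{1/p}$ is continuous and strictly monotone on $\R^+$, it commutes with both $\min_\mu$ and $\lim_n$ applied to non-negative quantities, so extracting $p$-th roots yields the desired formula.

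The only point requiring care is the verification of hypotheses ${\bf H_1},{\bf H_2}$ of Theorem~\ref{th3} for the chosen kinetic Lagrangian. Hypothesis ${\bf H_2}$ is automatic: the dual Hamiltonian $h(x,\xi)=q^{-q}|\xi|_{g^*}^q$ is invariant under parallel translation of the covector, so $h(x,\xi)=h(y,\xi_y)$ exactly and the required estimate holds with any $\eps$. Hypothesis ${\bf H_1}$ is a structural assumption on $M$, valid on flat tori and spheres as remarked in the paper, and is inherited directly from Theorem~\ref{th3}. Thus the genuine work has already been done inside Theorem~\ref{th3}, and the role of the corollary is merely to record the bookkeeping of powers of $\eps=1/n$ together with the passage from $W_p^p$ to $W_p$; there is no substantial obstacle.
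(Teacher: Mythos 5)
Your proposal is correct and follows essentially the same route as the paper: Example~\ref{lh} already records the identity $W_1(\lambda)=\lim_{\eps\rightarrow 0}\eps^{-1}\inf_{\mu\in\ooM}W_p(\mu+\eps\lambda^-,\mu+\eps\lambda^+)$ by combining Theorem~\ref{th3} with the closed-form expressions (\ref{lhf}) for $\hC_T$ and $\cC_{\eps T}$, and the corollary is just the substitution $\eps=1/n$. Your added remarks on ${\bf H_1}$, ${\bf H_2}$ and on extracting $p$-th roots through $\min_\mu$ and $\lim_n$ are accurate bookkeeping that the paper leaves implicit.
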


\section{Proof of Theorems~\ref{th1}\&\ref{th5}}\label{proof1}
We first show that $\hC(\lambda)<\infty$ (recall  (\ref{minlag1})).
 \begin{lemma}\label{5.1} For any $\lambda\in{\cal M}_0$, \
 ${\cal M}_\lambda\not=\emptyset$. In particular, since the Lagrangian $l$ is bounded from below,
 $\hC(\lambda)<\infty$.
 \end{lemma}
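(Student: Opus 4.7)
The plan is to exhibit an explicit element $\nu \in {\cal M}_\lambda$ with finite $l$-integral; the statement $\hC(\lambda) < \infty$ then follows immediately from the defining infimum in (\ref{minlag1}). (The lower bound on $l$ is used only to guarantee that the infimum is not $-\infty$.)

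For $\lambda = 0$ take $\nu = \delta_{(x_0, 0)}$ with $x_0 \in M$ arbitrary: both the closedness identity and $\int l\,d\nu = l(x_0,0) < \infty$ are immediate. For $\lambda \neq 0$, let $\lambda = \lambda^+ - \lambda^-$ be the Hahn decomposition, with common total mass $m := \lambda^\pm(M) > 0$ (guaranteed by $\lambda \in \mm$). Pick any coupling $\Lambda \in {\cal P}(\lambda^-, \lambda^+)$, for definiteness the product $\Lambda = (\lambda^- \otimes \lambda^+)/m$. For each $(x,y) \in M \times M$ select measurably an absolutely continuous curve $\vec{z}_{x,y} : [0, 1/m] \to M$ with $\vec{z}_{x,y}(0) = x$ and $\vec{z}_{x,y}(1/m) = y$; reparametrized minimizing geodesics provided by the Riemannian exponential furnish such a selection off the cut locus, which is closed and can be handled measurably in the usual way.

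Let $\nu_{x,y}$ denote the push-forward of Lebesgue measure on $[0, 1/m]$ under $t \mapsto (\vec{z}_{x,y}(t), \dot{\vec{z}}_{x,y}(t))$, and set
$$ \nu := \int_{M \times M} \nu_{x,y}\, d\Lambda(x,y). $$
Then $\int_{TM} d\nu = (1/m) \cdot m = 1$, so $\nu \in {\cal M}_1^+(TM)$. For any $\phi \in C^1(M)$ the fundamental theorem of calculus yields $\int \langle d\phi, v\rangle\, d\nu_{x,y} = \phi(y) - \phi(x)$, hence
$$ \int_{TM} \langle d\phi, v\rangle\, d\nu = \int_{M \times M} (\phi(y) - \phi(x))\, d\Lambda(x,y) = \int_M \phi\, d\lambda, $$
verifying the closedness constraint in (\ref{closedmslambda}). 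Since $M$ is compact, the speeds $|\dot{\vec{z}}_{x,y}| \equiv m\, D(x,y)$ are uniformly bounded in $(x,y)$, so the support of $\nu$ sits in a compact subset of $TM$ on which the continuous Lagrangian $l$ is bounded; consequently $\int l\, d\nu < \infty$.

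The only mildly delicate point is the measurable selection of the curves $\vec{z}_{x,y}$, but on a compact Riemannian manifold this is routine, and the final estimate is insensitive to the particular selection made. Everything else is linear bookkeeping with the transport plan $\Lambda$ and the fundamental theorem of calculus.
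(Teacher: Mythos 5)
Your proof is correct, and it reaches the conclusion by a genuinely different route than the paper. Both arguments share the same core device -- lift a coupling of $\lambda^-$ and $\lambda^+$ to $TM$ by pushing forward uniform measure along constant-speed geodesics, and verify the closedness constraint by the fundamental theorem of calculus -- but they differ in how they handle a general $\lambda\in{\cal M}_0$. You work with the full Jordan decomposition and an arbitrary coupling (the normalized product measure suffices), integrating the geodesic lifts $\nu_{x,y}$ directly against the plan; the price is a measurable selection of minimizing geodesics $(x,y)\mapsto \vec{z}_{x,y}$, which you correctly flag and which is indeed standard on a compact manifold (the correspondence has closed graph and admits a Borel selection). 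The paper instead approximates $\lambda$ weakly by finite combinations of Dirac differences, builds the lift explicitly for finitely many geodesic arcs -- where no selection issue arises -- and then extracts a weak limit $\nu$ of the resulting $\nu_n$; the price there is the compactness argument and the verification that the linear constraint survives the limit, which works because all the $\nu_n$ are supported in the fixed compact set $\{(x,v):|v|\leq \mathrm{diam}(M)\}$ so that $v$ is a bounded continuous test function on the relevant region. Your version is more self-contained in that it produces an explicit element of ${\cal M}_\lambda$ in one step and makes the normalization $\int d\nu=1$ transparent (the paper's $\nu_n$ has mass $\alpha_n n$, and the normalization is left implicit); the paper's version avoids any appeal to measurable selection. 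Both establish $\hC(\lambda)<\infty$ the same way, since the constructed measure is supported where $l$ is bounded.
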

\begin{proof}
It is enough to show that there exists a compact set $K\subset TM$ and  a sequence $\{ \lambda_n\}\subset {\cal M}_0$  converging weakly to $\lambda$ such that for each $n$ there exists $\nu_n\in {\cal M}_{\lambda_n}$ whose support is contained in $K$. Indeed, such a set is compact and there exists a weak limit $\nu=\lim_{n\rightarrow\infty}\nu_n$ which satisfies $\lim_{n\rightarrow \infty}v\nu_n=v\nu$ as well. Hence, if $\phi\in C^1(M)$ then
$$ \lim_{n\rightarrow\infty} \int_M\langle d\phi, v\rangle d\nu_n =\int_M\langle d\phi, v\rangle d\nu \
\ \ \ , \ \ \ \lim_{n\rightarrow\infty} \int_M \phi d\lambda_n= \int_M \phi d\lambda_n \ . $$
Since $\nu_n\in {\cal M}_{\lambda_n}$ we get
$$ \int_M\langle d\phi, v\rangle d\nu_n=\int_M \phi d\lambda_n$$
for any $n$, so the same equality holds for $\nu$ as well.

Now, we consider
\be\label{lambdan}\lambda_n=\alpha_n\sum_{j=1}^n \left(\delta_{x_j}-\delta_{y_j}\right)\ee
where $x_j, y_j\in M$ and $\alpha_n>0$. For any pair $(x_j, y_j)$ consider a geodesic arc corresponding to the Riemannian metric which connect $x$ to $y$, parameterized by the arc length: $\vec{z}_j:[0,1]\rightarrow M$ and $|\dot{\vec{z}}|= D(x_j,y_j)$ (recall section~\ref{notation}-(\ref{Mg})).
 Then
$$ \nu_n:=\alpha_n\sum_{j=1}^n\int_0^1 \delta_{x-\vec{z}_j(t), v-\dot{\vec{z}}_j(t)} dt \  $$
satisfies for any $\phi\in C^1(M)$
 \begin{multline}\int_M \langle d\phi, v\rangle d\nu_n=\alpha_n\sum_{j=1}^n\int_0^1\langle d\phi\left( \vec{z}_j(s), \dot{\vec{z}}_j(s)\right) \dot{\vec{z}}_j(t) \rangle dt = \alpha_n\sum_{j=1}^n\int_0^1\frac{d}{dt}\phi\left( \vec{z}_j(s)\right) dt \\ = \alpha_n\sum_{j=1}^n\left[\phi(y_j)-\phi(x_j)\right] = \int_M\phi d\lambda_n \
 \end{multline}
 hence $\nu_n\in {\cal M}_{\lambda_n}$. Finally, we can certainly find such a sequence $\lambda_n$ of the form (\ref{lambdan}) which converges weakly to $\lambda$.
\end{proof}
\subsection{Point  distances and Hamiltonians}
For $E\in\R$, let $\sigma_E:TM\rightarrow \R$ the support function of the level surface $h(x,\xi)\leq E$, that is:
\be\label{sigmadef} \sigma_E(x,v):= \sup_{\xi\in T^*_xM} \left\{ \langle \xi,v\rangle_{(x)} \ ; \ h(x,\xi)\leq E \right\}\ . \ee
It follows from our standing assumptions (Section~\ref{notation}-\ref{hamdes}) that
 $\sigma_E$ is differentiable as a function of $E$ for any $(x,v)\in TM$.  For the following Lemma see e.g. \cite{Ro}.

 Recall that
\be\label{DEdef} D_E(x,y):=\inf_{T>0} C_T(x,y) + ET\ee where $C_T$ as defined in (\ref{CTdef}). Recall also
 section~\ref{notation}-\ref{kappa}:

\begin{lemma} . \label{basicdef}
\be\label{DEdef2}  D_E(x,y)=\inf_{\vec{z}\in{\cal K}^1_{x,y}} \int_0^1\sigma_E\left(\vec{z}(s), \dot{\vec{z}}(s)\right)ds \ .  \ee
\end{lemma}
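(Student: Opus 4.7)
The plan is to derive the claim from a single pointwise Legendre identity relating $\sigma_E$ to the Lagrangian $l$, together with the parameterization-invariance of the $\sigma_E$-length functional.

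First I would establish the key identity
\[
\sigma_E(x,v) \;=\; \inf_{t>0}\, t\bigl[\,E+l(x,v/t)\,\bigr].
\]
This follows by dualizing the inequality constraint in (\ref{sigmadef}): introducing a multiplier $t\ge 0$ and invoking strong duality (which is valid since $h$ is strictly convex, superlinear on each fibre, and the feasible set $\{h(x,\cdot)\le E\}$ has nonempty interior for $E>\uE$, so Slater holds; the case $E=\uE$ is recovered by taking $E\downarrow\uE$), one gets
\[
\sigma_E(x,v)=\inf_{t\ge 0}\sup_{\xi}\bigl[\langle\xi,v\rangle - t(h(x,\xi)-E)\bigr]=\inf_{t>0}\bigl[tE+tl(x,v/t)\bigr],
\]
where the last step uses that $\sup_\xi\{\langle\xi,v\rangle-th(x,\xi)\}=tl(x,v/t)$ by the very definition of $l$ as the Legendre transform of $h$. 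I would also record that $\sigma_E$ is positively $1$-homogeneous in $v$, so $\int_0^1\sigma_E(\vec{z},\dot{\vec{z}})\,ds$ is invariant under any orientation-preserving reparameterization of $\vec z$.

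For the upper bound $D_E(x,y)\le \inf_{\vec z\in\mathcal K^1_{x,y}}\int_0^1\sigma_E(\vec z,\dot{\vec z})\,ds$: given $\vec z\in\mathcal K^1_{x,y}$ and a positive measurable $t:[0,1]\to(0,\infty)$, set $T=\int_0^1 t(s)\,ds$ and $\tau(s)=\int_0^s t(\sigma)\,d\sigma$. The curve $\vec w(\tau):=\vec z(\tau^{-1}(\tau))$ belongs to $\mathcal K^T_{x,y}$ and satisfies $\dot{\vec w}(\tau)=\dot{\vec z}(s)/t(s)$, whence
\[
C_T(x,y)+ET\;\le\; \int_0^T l(\vec w,\dot{\vec w})\,d\tau + ET \;=\; \int_0^1 t(s)\bigl[\,E+l(\vec z(s),\dot{\vec z}(s)/t(s))\,\bigr]\,ds.
\]
Taking the infimum over $t(\cdot)$ pointwise and using the identity from the previous paragraph turns the right-hand side into $\int_0^1\sigma_E(\vec z,\dot{\vec z})\,ds$; minimizing over $T$ on the left and over $\vec z$ on the right gives the inequality.

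For the reverse inequality, given any $T>0$ and $\vec w\in\mathcal K^T_{x,y}$, reparameterize to $\vec z(s):=\vec w(sT)\in\mathcal K^1_{x,y}$. Then $\dot{\vec z}(s)=T\dot{\vec w}(sT)$ and the change of variables yields
\[
C_T(x,y)+ET\;=\;\int_0^1 T\bigl[\,E+l(\vec z(s),\dot{\vec z}(s)/T)\,\bigr]\,ds\;\ge\;\int_0^1\sigma_E(\vec z,\dot{\vec z})\,ds,
\]
where the last inequality is the identity of Step 1 with the constant choice $t(s)\equiv T$. Taking infima over $\vec w$ and over $T$ produces $D_E(x,y)\ge \inf_{\vec z\in\mathcal K^1_{x,y}}\int_0^1\sigma_E(\vec z,\dot{\vec z})\,ds$. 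Combined with the first direction this proves (\ref{DEdef2}).

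The main technical point to check carefully is the strong duality step that identifies $\sigma_E$ with the Legendre-type infimum: one must make sure that the infimum is actually attained (or approached by admissible $t>0$), and handle the degenerate case $v=0$ (where both sides vanish) and the case $E=\uE$ separately. The rest of the argument is a parameterization manipulation that would otherwise be essentially algebraic.
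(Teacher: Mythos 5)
Your proposal is correct, but note that the paper itself offers \emph{no proof} of this lemma: it is stated with a pointer to Rockafellar's \emph{Convex Analysis}, so there is no internal argument to compare against. Your route --- (i) the Lagrange-duality identity $\sigma_E(x,v)=\inf_{t>0}t\,[E+l(x,v/t)]$ for the support function of the sublevel set $\{h(x,\cdot)\le E\}$, and (ii) the reparameterization argument exploiting the positive $1$-homogeneity of $\sigma_E$ in $v$ --- is the standard way to reconcile the ``time-optimized action'' $\inf_{T>0}C_T+ET$ with the parameterization-free length functional, and both inequalities are carried out correctly. The Slater condition you invoke does hold for $E>\uE$, since $\uE\ge\inf_\xi h(x,\xi)$ for every $x$ (take $\phi\equiv 0$ pointwise in the effective-Hamiltonian characterization), so $\{h(x,\cdot)<E\}$ is nonempty and open; this also settles the degenerate case $v=0$, where both sides vanish because $E+l(x,0)=E-\inf_\xi h(x,\xi)\ge 0$. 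The only points that deserve one more line in a polished write-up are the ones you already flag: the interchange $\inf_{t(\cdot)}\int=\int\inf_{t>0}$ requires a measurable (near-optimal) selection $s\mapsto t(s)$ with $0<\int_0^1 t<\infty$, which you can get by truncating $t$ to $[\delta,1/\delta]$ and using continuity in $t$; and the case $E=\uE$ follows by monotone/concave continuity of both sides in $E$. With those remarks your argument is a complete, self-contained substitute for the external citation.
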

Given $x\in M$, let
\be\label{uE2} \uE:= \inf\left\{ E\in\R; \ D_E(x,x)>-\infty \right\}\ee
For the  following Lemma see \cite{mane} (also \cite{S}):
\begin{lemma}
$\uE$ is independent of $x\in M$. The definitions (\ref{uE2}) and (\ref{minlag}) and (\ref{maxham}) are equivalent. If $E\geq \uE$ then $D_E(x,y) > -\infty$ for any $x,y\in M$ and, in addition
\begin{description}
\item{i)} $D_E(x,x)=0$ for any $x\in M$.
\item{ii)} For any $x,y,z\in M$, $D_E(x,z)\leq D_E(x,y) + D_E(y,z)$
\end{description}
\end{lemma}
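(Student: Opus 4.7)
The plan is to build everything on the path-integral formula (\ref{DEdef2}) together with the observation that the support function $\sigma_E(x,v)$ is positively $1$-homogeneous in $v$. This homogeneity makes the action integrand $\sigma_E(\vec{z},\dot{\vec{z}})\,ds$ invariant under orientation-preserving reparametrization, so concatenating any $\vec{z}_1\in{\cal K}^1_{x,y}$ with $\vec{z}_2\in{\cal K}^1_{y,z}$ and reparametrizing onto $[0,1]$ yields the triangle inequality (ii) after passing to infima.

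For (i) when $E\ge\uE$: the constant curve $\vec{z}\equiv x$ is admissible, and superlinearity of $h$ makes the sublevel set $\{h(x,\cdot)\le E\}$ non-empty and convex, so $\sigma_E(x,0)=0$ and $D_E(x,x)\le 0$. Conversely, if some loop at $x$ had strictly negative action, iterating it $n$ times would produce loops with action $\to -\infty$, contradicting $D_E(x,x)>-\infty$; hence $D_E(x,x)=0$. The independence of $\uE$ from the choice of $x$ and the finiteness $D_E(x,y)>-\infty$ on all of $M\times M$ follow from the same concatenation idea: the smoothness of $M$ provides finite upper bounds on $D_E(x_0,x_1)$ and $D_E(x_1,x_0)$ via any joining curve, and any loop at $x_1$ with very negative action would, by prepending and appending these joining curves, produce a loop at $x_0$ with very negative action, contradicting the hypothesis on $x_0$.

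The substantive part is the equivalence of (\ref{uE2}) with (\ref{minlag}) and (\ref{maxham}). I would link $D_E$ to the set ${\cal L}_E$ from (\ref{LE}) via the elementary Fenchel equivalence $h(x,d\phi)\le E \iff \langle d\phi,v\rangle\le \sigma_E(x,v)$ for all $v\in T_xM$: integrating along any path gives $\phi(y)-\phi(x)\le D_E(x,y)$, so the existence of a $C^1$ subsolution with $h(x,d\phi)\le E$ forces $D_E(x,x)\ge 0$ and in particular $D_E(x,x)>-\infty$. The converse direction---constructing a subsolution out of finiteness of $D_E$---uses the Lax--Oleinik type function $\phi(y)=D_E(x_0,y)$, which is $D_E$-Lipschitz and, by a calibrated-curve argument, satisfies $h(y,d\phi)\le E$ almost everywhere. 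Identification of all three quantities with the single critical value is then completed by the Fenchel--Kantorovich duality between the closed-measure problem (\ref{closedm})--(\ref{minlag}) and the saddle-point problem (\ref{maxham}): the pointwise Legendre inequality $l(x,v)\ge \langle d\phi,v\rangle - h(x,d\phi)$ integrated against $\nu\in{\cal M}^c_0$ gives one direction, and a minimax exchange gives the reverse.

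The main obstacle will be precisely this minimax exchange, since $C^1(M)$ is not compact and the optimal subsolution need not itself be $C^1$; the standard remedy is to relax the test functions to Lipschitz viscosity subsolutions of $h(x,d\phi)=\uE$, whose existence is Fathi's weak KAM theorem, and which close the loop between (\ref{uE2}), (\ref{minlag}), and (\ref{maxham}). Apart from this analytic point, the lemma is a repackaging of the path-integral inequalities above and the argument follows \cite{mane} and \cite{S}.
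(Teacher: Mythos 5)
The paper does not actually prove this lemma: it is quoted from Ma\~{n}\'{e} \cite{mane} (see also \cite{S}), so there is no internal argument to compare yours against. Judged on its own terms, your sketch is the standard proof and is essentially sound: the $1$-homogeneity of $\sigma_E$ in $v$, hence reparametrization invariance of the action in (\ref{DEdef2}), does give the triangle inequality by concatenation; the constant-curve bound plus the loop-iteration argument gives $D_E(x,x)=0$; prepending and appending fixed joining arcs of finite action transfers finiteness of $D_E(x,\cdot)$ between base points, which yields both the independence of $\uE$ from $x$ and finiteness on all of $M\times M$; and the Fenchel equivalence $h(x,d\phi)\le E \Leftrightarrow \langle d\phi,v\rangle\le\sigma_E(x,v)$ is exactly the bridge the paper itself uses later (Lemma~\ref{6.3}) to identify subsolutions with ${\cal L}_E$.

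One point of order deserves care. Your iteration argument concludes $D_E(x,x)=0$ only \emph{given} $D_E(x,x)>-\infty$; by monotonicity of $E\mapsto D_E$ this hypothesis is automatic for $E>\uE$, but the lemma also asserts finiteness at $E=\uE$ itself, which does not follow from the definition (\ref{uE2}) as an infimum. As you note at the end, this is supplied by the existence of a (Lipschitz, or by \cite{[Fa]} even $C^1$) critical subsolution of $h(x,d\phi)=\uE$, which forces $D_{\uE}(x,y)\ge\phi(y)-\phi(x)>-\infty$; just make sure that step is established \emph{before} you invoke $D_{\uE}(x,x)>-\infty$ in part (i), so the argument is not circular. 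The remaining substantive issue you flag --- the minimax exchange identifying (\ref{uE2}) with (\ref{minlag}) and (\ref{maxham}) despite the non-compactness of $C^1(M)$ --- is indeed the real content of the cited references, and deferring it to Fathi's weak KAM theorem is the accepted resolution.
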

From (\ref{DEdef}), Lemma~\ref{basicdef} and the continuity of $\sigma_E$ with respect to $E\geq \uE$ we get
\begin{corollary}\label{diff}
If $E\geq \uE$ then for any $x,y\in M$,  $D_E(x,y)$ is continuous, monotone non-decreasing and concave as a function of $E$.
\end{corollary}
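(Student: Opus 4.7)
The plan is to separate the three claims. Monotonicity and concavity follow directly from the variational representation (\ref{DEdef}), while continuity is automatic on the open interval $(\uE,\infty)$ from concavity and requires a little extra argument at the endpoint $E=\uE$, using Lemma~\ref{basicdef} together with the stated continuity of $\sigma_E$ in $E$.

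For monotonicity and concavity, observe that for each fixed $T>0$ the map $E\mapsto C_T(x,y)+ET$ is affine (hence concave) in $E$ and, since $T>0$, non-decreasing in $E$. A pointwise infimum of affine functions is concave, and a pointwise infimum of non-decreasing functions is non-decreasing. Being such an infimum over $T>0$, $E\mapsto D_E(x,y)$ is both concave and non-decreasing on $[\uE,\infty)$. Standard convex-analytic facts then give continuity on $(\uE,\infty)$ automatically.

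It therefore remains to establish right-continuity at $E=\uE$. Monotonicity delivers $\liminf_{E\to\uE^+}D_E(x,y)\geq D_{\uE}(x,y)$ for free. For the reverse inequality, fix $\eps>0$ and, using Lemma~\ref{basicdef}, pick $\vec{z}\in {\cal K}^1_{x,y}$ with $\int_0^1\sigma_{\uE}(\vec{z}(s),\dot{\vec{z}}(s))\,ds\leq D_{\uE}(x,y)+\eps$. Since the sublevel sets $\{h(x,\cdot)\leq E\}$ grow monotonically with $E$, the function $E\mapsto\sigma_E(x,v)$ is non-decreasing, and by hypothesis continuous, with $\sigma_E\downarrow\sigma_{\uE}$ pointwise as $E\downarrow\uE$. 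Passing to the limit under the integral gives $\lim_{E\to\uE^+}\int_0^1\sigma_E(\vec{z},\dot{\vec{z}})\,ds=\int_0^1\sigma_{\uE}(\vec{z},\dot{\vec{z}})\,ds\leq D_{\uE}(x,y)+\eps$, and combining with the crude bound $D_E(x,y)\leq\int_0^1\sigma_E(\vec{z},\dot{\vec{z}})\,ds$ and $\eps\to 0$ yields $\limsup_{E\to\uE^+}D_E(x,y)\leq D_{\uE}(x,y)$.

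The only nontrivial step is justifying the interchange of limit and integral. Super-linearity of $h$ on the fibres of $T^*M$, uniform in $x\in M$ (compactness), makes the sublevel sets $\{h(x,\cdot)\leq E\}$ uniformly bounded in $x$ over any bounded range of $E$, which yields a dominator of the form $\sigma_E(\vec{z}(s),\dot{\vec{z}}(s))\leq C|\dot{\vec{z}}(s)|$ valid for all $E$ in a fixed neighbourhood of $\uE$. Since $\vec{z}$ is absolutely continuous this dominator is integrable, and dominated convergence applies.
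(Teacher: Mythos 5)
Your proof is correct and follows exactly the route the paper intends: the paper offers no written argument beyond citing the representation $D_E(x,y)=\inf_{T>0}\left[C_T(x,y)+ET\right]$ (for monotonicity and concavity as an infimum of increasing affine functions), Lemma~\ref{basicdef}, and the continuity of $\sigma_E$ in $E$ (for continuity down to the endpoint $E=\uE$), which are precisely the three ingredients you use. Your version merely makes explicit the interior-continuity-from-concavity step and the dominated-convergence justification at $E=\uE$ that the paper leaves implicit.
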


Note that the differentiability of $\sigma_E$ with respect to $E$ does {\em not} imply that $D_E(x,y)$ is differentiable for each $x,y\in M$. However, since $D_E(x,y)$ is a concave function of $E$ for each $x,y\in M$, it is differentiable for Lebesgue almost any $E>\uE$. We then obtain by differentiation
\begin{lemma}\label{time}
If $E$ is a point of differentiability of $D_E(x,y)$ then there exists a geodesic arc $\vec{z}\in {\cal K}_{x,y}^1$
realizing (\ref{DEdef2}) such that the $E$ derivative of $D_E(x,y)$ is given by
\be\label{realE} T_E(x,y):= \frac{d}{dE} D_E(x,y)= \int_0^1 \sigma^{'}_E\left(\vec{z}(s), \dot{\vec{z}}(s)\right)ds \ , \ee
where $\sigma^{'}_E$ is the $E$ derivative of $\sigma_E$.  Moreover
\be\label{de=te} D_E(x,y)= C_{T_E(x,y)}(x,y) + ET_E(x,y) \ . \ee
\end{lemma}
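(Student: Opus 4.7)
The plan is to pass between the two equivalent representations of $D_E(x,y)$: the \emph{time} representation (\ref{DEdef}) gives the derivative $T_E$, while the \emph{path} representation (\ref{DEdef2}) gives the integral formula, and the link is made by constructing a single minimizer that realizes both.

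Stage 1 (extract $T_E(x,y)$ and prove (\ref{de=te})). For each fixed pair $(x,y)$, $E \mapsto D_E(x,y)$ is the infimum over $T>0$ of the family of affine functions $E \mapsto C_T(x,y) + E T$, hence concave as already recorded in Corollary~\ref{diff}. For $E>\uE$ the function $T \mapsto C_T(x,y) + E T$ is lower semicontinuous, tends to $+\infty$ as $T\to 0^+$ because of superlinearity of $l$ in $v$, and stays bounded below since $D_E(x,y)>-\infty$ for $E\geq \uE$. Hence the infimum is attained at some $T_0 \in (0,\infty)$. For any $E$ one then has
$$ D_E(x,y) \leq C_{T_0}(x,y) + E T_0 = D_{E_0}(x,y) + (E-E_0) T_0 , $$
so $T_0$ lies in the superdifferential of $D_\cdot(x,y)$ at $E_0$. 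At a point of differentiability the superdifferential is a singleton, and we read off $T_0 = D^{\,\prime}_{E_0}(x,y) =: T_{E_0}(x,y)$ together with the identity (\ref{de=te}).

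Stage 2 (build a minimizer of (\ref{DEdef2})). By Tonelli's theorem applied to $l$ (strictly convex and superlinear in $v$), $C_{T_0}(x,y)$ is attained by some $\vec{z}_*:[0,T_0]\to M$ with $\vec{z}_*(0)=x$, $\vec{z}_*(T_0)=y$. Reparametrize to unit time by $\vec{z}(s):=\vec{z}_*(T_0 s)$, so $\dot{\vec{z}}(s)=T_0\dot{\vec{z}}_*(T_0 s)$. Legendre duality gives the pointwise inequality
$$ \sigma_E(x,v) = \sup_{\xi:\,h(x,\xi)\le E} \langle\xi,v\rangle \leq t\bigl(l(x,v/t)+E\bigr) \qquad \forall\, t>0 , $$
and choosing $t=T_0$ at each $s$ and integrating yields
$$ \int_0^1 \sigma_{E_0}(\vec{z}(s),\dot{\vec{z}}(s))\,ds \leq C_{T_0}(x,y)+E_0 T_0 = D_{E_0}(x,y). $$
The reverse inequality is built into (\ref{DEdef2}), so $\vec z$ realizes the infimum in (\ref{DEdef2}) at $E=E_0$.

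Stage 3 (differentiate along the fixed path). Freeze $\vec z$ from Stage 2 and set
$$ g(E) := \int_0^1 \sigma_E(\vec z(s),\dot{\vec z}(s))\,ds . $$
Because $\vec z$ is admissible in (\ref{DEdef2}) for every $E$, we have $g(E) \geq D_E(x,y)$ with equality at $E=E_0$; so $E_0$ is a minimizer of the nonnegative function $g-D_\cdot(x,y)$. The differentiability of $\sigma_E$ in $E$ (assumption Section~\ref{notation}-\ref{hamdes}), combined with the boundedness of $\dot{\vec z}$ on $[0,1]$ and local boundedness of $\sigma'_E$ in $E$ near $E_0$, lets us differentiate $g$ under the integral sign. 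Matching derivatives at the contact point $E_0$ gives
$$ \int_0^1 \sigma'_{E_0}(\vec z(s),\dot{\vec z}(s))\,ds = g'(E_0) = D^{\,\prime}_{E_0}(x,y) = T_{E_0}(x,y), $$
which is (\ref{realE}).

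The chief obstacle is the uniform control needed to differentiate under the integral in Stage 3: one must know that $\sigma'_E(\vec z(s),\dot{\vec z}(s))$ admits an integrable dominant for $E$ in a neighbourhood of $E_0$, which I would obtain from the explicit representation $\sigma'_E = 1/h_\xi(x,\xi^*)\cdot$ (appropriate quantity) at the maximizing covector, together with the fact that $\dot{\vec z}$ is essentially bounded since $\vec z_*$ is a Tonelli minimizer. Everything else is concavity and Legendre duality.
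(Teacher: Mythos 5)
Your proof is correct, and it is essentially the argument the paper leaves implicit: the lemma is stated there with only the remark ``we then obtain by differentiation,'' and your three stages (superdifferential of the infimum of affine functions in $E$, a Tonelli minimizer reparametrized to $[0,1]$ realizing both representations, and an envelope/first-order matching at the contact point $E_0$) are exactly the details that justify that remark. One small point to tighten in Stage 1: to get attainment of $\inf_{T>0}\left[C_T(x,y)+ET\right]$ you need coercivity as $T\to\infty$, not merely boundedness below; this follows by picking $E'\in(\uE,E)$ and writing $C_T(x,y)+ET\geq D_{E'}(x,y)+(E-E')T\to\infty$, using $D_{E'}(x,y)>-\infty$. (The degenerate case $x=y$, where the infimum is approached at $T\to 0^+$, is trivial since $D_E(x,x)\equiv 0$.) In Stage 3, an alternative to hunting for an integrable dominant is to use that $E\mapsto\sigma_E(x,v)$ is concave, so the difference quotients are monotone and monotone convergence applies.
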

From (\ref{sigmadef}) we get $\sigma_E(x, v)\leq |v|\max\{ |p| \ ; \ h(x,\xi)\leq E\}$. From our standing assumption on $h$ (section~\ref{notation}-(\ref{hamdes})) and (\ref{DEdef2}) we obtain
\begin{lemma} For any $x,y\in M$ and $E\geq \uE$
 $$D_E(x,y)\leq  \hat{h}^{-1}(E+C) D(x,y)$$
 In particular
\be\label{Dgestim} \lim_{E\rightarrow\infty} E^{-1}D_E(x,y)=0\ee
uniformly on $M\times M$.
\end{lemma}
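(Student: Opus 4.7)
The plan is to bound $\sigma_E$ fibrewise by a multiple of the Riemannian norm $|\cdot|_x$, insert this bound into the formula (\ref{DEdef2}) from Lemma~\ref{basicdef}, and then use the superlinearity of $\hat h$ to obtain the limit.

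First I would unpack the standing assumption from Section~\ref{notation}-\ref{hamdes}: if $h(x,\xi)\leq E$ then $\hat h(\xi)\leq h(x,\xi)+C\leq E+C$. Since the superlinearity of $h$ on the fibres forces the sublevel set $\{\xi\in T^*_xM:\,h(x,\xi)\leq E\}$ to be bounded uniformly in $x$, we may set
$$ \hat h^{-1}(E+C):=\sup\bigl\{\,|\xi|_x\ :\ \hat h(\xi)\leq E+C,\ x\in M\,\bigr\}<\infty. $$
Thus every $\xi$ with $h(x,\xi)\leq E$ satisfies $|\xi|_x\leq \hat h^{-1}(E+C)$, and the elementary Cauchy--Schwarz inequality $\langle\xi,v\rangle\leq |\xi|_x|v|_x$ in the fibre, combined with the definition (\ref{sigmadef}), yields
$$ \sigma_E(x,v)\leq \hat h^{-1}(E+C)\,|v|_x. $$

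Next, I would plug this pointwise bound into (\ref{DEdef2}). For any $\vec z\in\mathcal K^1_{x,y}$,
$$ \int_0^1\sigma_E\bigl(\vec z(s),\dot{\vec z}(s)\bigr)ds\ \leq\ \hat h^{-1}(E+C)\int_0^1|\dot{\vec z}(s)|_{\vec z(s)}\,ds. $$
Taking the infimum over $\vec z\in\mathcal K^1_{x,y}$, the right-hand side reduces to $\hat h^{-1}(E+C)\,D(x,y)$, because the Riemannian length functional is invariant under reparametrisation and its infimum over absolutely continuous paths joining $x$ to $y$ equals the geodesic distance $D(x,y)$. This proves the first statement.

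Finally, for (\ref{Dgestim}) I would invoke the superlinearity $\hat h(\xi)/\|\xi\|\to\infty$ as $\|\xi\|\to\infty$: for every $M>0$ there exists $R_M$ with $\hat h(\xi)\geq M\|\xi\|$ whenever $\|\xi\|\geq R_M$. Hence whenever $E+C$ is large enough, every $\xi$ with $\hat h(\xi)\leq E+C$ satisfies $\|\xi\|\leq (E+C)/M$, so $\hat h^{-1}(E+C)\leq (E+C)/M$. Consequently $E^{-1}\hat h^{-1}(E+C)\to 0$ as $E\to\infty$, and since $D(\cdot,\cdot)$ is bounded on the compact manifold $M\times M$, the limit (\ref{Dgestim}) is uniform. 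The only mildly delicate step is the very first one, interpreting $\hat h^{-1}$ correctly so that it is both well defined and compatible with the superlinearity bound; everything after that is a routine combination of the fibrewise Cauchy--Schwarz estimate with the length interpretation of $D_E$.
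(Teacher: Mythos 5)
Your proposal is correct and follows essentially the same route as the paper: the paper likewise derives $\sigma_E(x,v)\leq |v|\max\{|\xi| \ ; \ h(x,\xi)\leq E\}$ from (\ref{sigmadef}), converts the sublevel-set bound into $\hat h^{-1}(E+C)$ via the standing lower bound $h(x,\xi)\geq -C+\hat h(\xi)$, inserts this into (\ref{DEdef2}), and obtains (\ref{Dgestim}) from the superlinearity of $\hat h$. Your explicit definition of the generalized inverse $\hat h^{-1}$ and the reparametrisation remark merely make precise what the paper leaves implicit.
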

\begin{corollary}\label{corlip} For $E\geq \uE$, the set ${\cal L}_E$ (\ref{LE}) is contained in the set of Lipschitz functions with respect to $D$, and ${\cal L}_E$ is locally compact in $C(M)$.
\end{corollary}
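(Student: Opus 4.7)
The plan is to extract both claims from the estimate of the preceding lemma, $D_E(x,y)\leq \hat{h}^{-1}(E+C)\,D(x,y)$.

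For the Lipschitz claim, fix $\phi\in{\cal L}_E$. By definition $\phi(x)-\phi(y)\leq D_E(x,y)$ for all $x,y\in M$, and interchanging $x$ and $y$ gives $\phi(y)-\phi(x)\leq D_E(y,x)$. Combining these with the above estimate and the symmetry of the geodesic distance $D$, one obtains
$$ |\phi(x)-\phi(y)| \leq L_E \, D(x,y), \qquad L_E:=\hat{h}^{-1}(E+C). $$
This bound is uniform in $\phi\in{\cal L}_E$, so ${\cal L}_E$ is contained in (and is in fact equi-Lipschitz within) the set of $D$-Lipschitz functions on $M$.

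For local compactness in $C(M)$, fix $\phi_0\in {\cal L}_E$ and set $B_R(\phi_0):=\{\phi\in C(M):\|\phi-\phi_0\|_\infty < R\}$. Any element of ${\cal L}_E\cap\overline{B_R(\phi_0)}$ satisfies $\|\phi\|_\infty\leq\|\phi_0\|_\infty+R$ and is $L_E$-Lipschitz, hence this family is uniformly bounded and equicontinuous. Since $M$ is compact, the Arzel\`a--Ascoli theorem yields relative compactness in $C(M)$. It remains to observe that ${\cal L}_E$ is closed in $C(M)$: if $\phi_n\to\phi$ uniformly with $\phi_n\in{\cal L}_E$, passing to the pointwise limit in $\phi_n(x)-\phi_n(y)\leq D_E(x,y)$ gives $\phi\in{\cal L}_E$. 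Therefore ${\cal L}_E\cap\overline{B_R(\phi_0)}$ is compact, providing $\phi_0$ with a relatively compact open neighborhood inside ${\cal L}_E$.

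The only point requiring minor care is the potential asymmetry of $D_E$, but this is bypassed by bounding both $D_E(x,y)$ and $D_E(y,x)$ by the symmetric $D(x,y)$ via the preceding lemma. Beyond that, no real obstacle arises, since the conclusion reduces to a direct application of Arzel\`a--Ascoli together with closedness of ${\cal L}_E$ under uniform limits.
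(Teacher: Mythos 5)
Your proof is correct and follows exactly the route the paper intends: the corollary is stated without proof immediately after the lemma giving $D_E(x,y)\leq \hat{h}^{-1}(E+C)\,D(x,y)$, and the equi-Lipschitz bound plus Arzel\`a--Ascoli with closedness of ${\cal L}_E$ under uniform limits is the implicit argument. Your handling of the possible asymmetry of $D_E$ and the distinction between local compactness and compactness (since ${\cal L}_E$ is invariant under adding constants) is exactly the right level of care.
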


Given $\phi\in C^1(M)$ let \be\label{oHdef}
\overline{H}(\phi):= \sup_{x\in M} h(
x, d\phi) \  \ .  \ee
We extend the definition of $\oH$ to the larger class of Lipschitz functions by the following

\begin{lemma}\label{6.3}
 If $\phi\in C^1(M)$ then $$ \overline{H}(\phi)
=
\min_{E \geq \underline{E}}\left \{E; \
\phi\in  {\cal L}_E\right\}
\ ,
$$
where ${\cal L}_E$ as defined in (\ref{LE}).
 \end{lemma}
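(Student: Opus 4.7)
The plan is to prove the identity by establishing, for each $E \geq \uE$, the equivalence
\[ \oH(\phi) \leq E \quad \Longleftrightarrow \quad \phi \in {\cal L}_E. \]
Since the family $\{{\cal L}_E\}$ is nondecreasing in $E$, and $\oH(\phi) \geq \uE$ holds automatically ($\oH(\phi) \geq \inf_{\phi'} \oH(\phi') = \uE$ by the Evans--Gomes definition recalled in Section~\ref{background}), the minimal $E \geq \uE$ for which $\phi \in {\cal L}_E$ is exactly $\oH(\phi)$, which is the claimed formula.

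The easy implication $\oH(\phi) \leq E \Rightarrow \phi \in {\cal L}_E$ rests on the very definition of the support function (\ref{sigmadef}): the pointwise bound $h(x, d\phi(x)) \leq E$ entails $\langle d\phi(x), v\rangle \leq \sigma_E(x, v)$ for every $v \in T_xM$. Integrating this inequality along an arbitrary $\vec{z} \in {\cal K}^1_{x,y}$ gives $\phi(y) - \phi(x) \leq \int_0^1 \sigma_E(\vec{z}, \dot{\vec{z}})\, ds$; taking the infimum over all such paths and invoking \rlemma{basicdef} yields $\phi(y) - \phi(x) \leq D_E(x,y)$, i.e.\ $\phi \in {\cal L}_E$.

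The converse direction is the substance of the lemma. Fixing $x \in M$ and $v \in T_xM$, I choose for small $t > 0$ a smooth curve $\vec{z}_t:[0,t] \to M$ with $\vec{z}_t(0) = x$ and $\dot{\vec{z}}_t(0) = v$. Since $\sigma_E$ is positively $1$-homogeneous in the fibre variable, a reparameterization of \rlemma{basicdef} gives $D_E(x, \vec{z}_t(t)) \leq \int_0^t \sigma_E(\vec{z}_t(s), \dot{\vec{z}}_t(s))\, ds$. Combining with $\phi \in {\cal L}_E$, dividing by $t$, and letting $t \downarrow 0$, the continuity of $\sigma_E$ on $TM$ produces $\langle d\phi(x), v\rangle \leq \sigma_E(x, v)$ for every $v$. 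Applying the bipolar (support function) theorem to the closed convex set $\{\xi \in T_x^*M : h(x, \xi) \leq E\}$ (closed and convex by continuity and strict convexity of $h$), whose support function is precisely $\sigma_E(x, \cdot)$, this inequality forces $d\phi(x)$ to lie in the set, so $h(x, d\phi(x)) \leq E$; taking the sup over $x$ completes the argument.

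The main point requiring care is the finiteness and continuity of $\sigma_E$ on $TM$ for $E \geq \uE$, which underlies the short-time limit. This reduces to verifying that $\{\xi : h(x, \xi) \leq E\}$ is nonempty at every $x$ when $E \geq \uE$; since any $\phi \in C^1(M)$ satisfies $\oH(\phi) \geq \sup_x \min_\xi h(x, \xi)$, one has $\uE \geq \sup_x \min_\xi h(x, \xi)$, and the sublevel set is indeed nonempty everywhere. Beyond this bookkeeping, the whole argument is just Fenchel duality between $h(x, \cdot)$ and its support function $\sigma_E(x, \cdot)$, combined with the path representation of $D_E$ from \rlemma{basicdef}.
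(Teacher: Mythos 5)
Your proof is correct, and one half of it takes a genuinely different route from the paper's. The direction $\phi\in{\cal L}_E\Rightarrow \oH(\phi)\le E$ coincides with the paper's argument: both pass from the $D_E$-Lipschitz bound to $\langle d\phi(x),v\rangle\le\sigma_E(x,v)$ by shrinking paths and then recover $h(x,d\phi(x))\le E$ from convexity of the sublevel set $\{\xi:\ h(x,\xi)\le E\}$ via the support-function (bipolar) theorem. For the reverse inequality, however, you prove the clean implication $\oH(\phi)\le E\Rightarrow\phi\in{\cal L}_E$ by integrating $\langle d\phi,\dot{\vec{z}}\rangle\le\sigma_E(\vec{z},\dot{\vec{z}})$ along an arbitrary path and taking the infimum via \rlemma{basicdef}, whereas the paper works on the Lagrangian side, combining Young's inequality $h(x,d\phi)\ge\langle d\phi,v\rangle-l(x,v)$ with a near-minimizing curve realizing $D_E(x,y)$ through (\ref{de=te}). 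Your route buys two things: it avoids the Lagrangian and the existence of near-optimal curves altogether, and it sidesteps a soft spot in the paper's version, whose final estimate $T_\eps^{-1}\int_0^{T_\eps}h\,dt\ge E-\eps$ only follows if $\phi(y)-\phi(x)\ge D_E(x,y)$ for the chosen pair, i.e.\ it tacitly requires a pair saturating the $D_E$-Lipschitz bound at the minimal $E$, which the text never exhibits. Your bookkeeping (nonemptiness of $\{h(x,\cdot)\le E\}$ for $E\ge\uE$, positive homogeneity and upper semicontinuity of $\sigma_E$ for the short-time limit) is the right level of care. The only blemish, inherited from the paper itself, is the orientation mismatch between the definition (\ref{LE}), which reads $\phi(x)-\phi(y)\le D_E(x,y)$, and the inequality $\phi(y)-\phi(x)\le D_E(x,y)$ that both you and the paper actually use; this is harmless when $D_E$ is symmetric but should be stated consistently for non-reversible Lagrangians.
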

\begin{proof}
First we show that if $\phi\in {\cal L}_E\cap C^1(M)$ then $h(x, d\phi)\leq E$ for all $x\in M$. Indeed, for any  $x,y\in M$ and any  curve $z(\cdot)$ connecting  $x$ to $y$
$$ \phi(y)-\phi(x) = \int_0^1d \phi(z(t))\cdot \dot{z} dt\leq D_E(x,y)\leq \int_0^1\sigma_E(z(t), \dot{z}(t)) dt$$
hence $d\phi(x)\cdot v\leq \sigma_E(x,v)$ for any $v\in T_xM$. Then, by definition, $d\phi(x)$
is contained in any supporting
half  space which contains the set $Q_x(E):= \{ \xi\in T^*_xM; \ h(x,\xi)\leq E\}$. Since this set is convex by assumption, it follows that $d\phi\in Q_x(E)$, so $h(x, d\phi)\leq E$ for any $x\in M$. Hence $\overline{H}(\phi)\leq E$.

Next we show the opposite inequality $h(x, d\phi)\geq E$ for all $x\in M$.  Recall (\ref{de=te}). Then
 for any $\eps>0$ we can find $T_\eps>0$ and $\vec{z}_\eps\in {\cal K}^{T_\eps}_{x,y}$ so
\be\label{est1} D_E(x,y)\geq \int_0^{T_\eps}l(\vec{z}_\eps(t), \dot{\vec{z}}_\eps(t)) dt + (E-\eps)T_\eps  \ . \ee
 Next, for a.e $t\in[0, T_\eps]$
 \be\label{est2}h\left(\vec{z}_\eps(t), d\phi(\vec{z}_\eps(t))\right) \geq \dot{\vec{z}}_\eps(t)\cdot d\phi(\vec{z}_\eps(t))-l\left(\vec{z}_\eps(t), \cdot{\vec{z}}_\eps(t) \right) \ . \ee
Integrate (\ref{est2}) from $0$ to $T_\eps$ and use $\vec{z}_\eps\in{\cal K}^{T_\eps}_{x,y}$, (\ref{est1}, \ref{est2}) and the definition of ${\cal L}_E$ to obtain
$$ T_\eps^{-1}\int_0^{T_\eps}h\left(\vec{z}_\eps(t), d\phi(\vec{z}_\eps(t))\right)dt \geq T_\eps^{-1}\left[\phi(y)-\phi(x)\right]-T_\eps^{-1}\int_0^{T_\eps}l\left(\vec{z}_\eps(t), \cdot{\vec{z}}_\eps(t) \right)dt
\geq E-\eps \ . $$
Hence, the supremum of $h(x, d\phi)$ along the orbit of $\vec{z}_\eps$ is, at least, $E-\eps$.
Since $\eps$ is arbitrary, then $\oH(\phi)\geq E$.
 \end{proof}
 \subsection{Measure  distances and Hamiltonians}
From Lemma~\ref{6.3} and Corollary~\ref{corlip}  we  extend the definition of $\oH$ to the space $Lip(M)$
of Lipschitz functions on $M$.
Let now define for $\lambda\in\mm$
 \be\label{H*def}\oH_T^*(\lambda):= \sup_{
\phi\in Lip(M)}\left\{ -T\oH(
\phi)  + \int_M \phi d\lambda\right\}
 \in \R\cup\{\infty\} \  . \ee

 \begin{proposition}\label{cormain} For any $\lambda\in \mm$
\be\label{W1e} \oH_T^*( \lambda)
 =
 \sup_{E\geq \underline{E}}\left\{\D_E(\lambda)-TE\right\}  \ . \ee
 \end{proposition}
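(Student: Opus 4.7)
The plan is to recognize that $\oH_T^*(\lambda)$ and $\sup_{E \geq \uE}\{\D_E(\lambda) - TE\}$ differ only in the order in which one takes suprema over $\phi$ and over $E$, and to justify exchanging these suprema. The key tool is the extension of \rlemma{6.3}: for $\phi \in Lip(M)$ one sets
\[
\oH(\phi) := \min\{E \geq \uE : \phi \in {\cal L}_E\},
\]
with the convention $\oH(\phi) = +\infty$ if no such $E$ exists. This minimum is indeed attained, because ${\cal L}_E$ is nondecreasing in $E$ (by \rcor{diff}, since $D_E$ is nondecreasing in $E$) and is stable under monotone decreasing limits of $E$ by the continuity of $D_E$ in $E$.

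Starting from the right-hand side and unfolding $\D_E(\lambda)$ via (\ref{dualdefD}),
\[
\sup_{E \geq \uE}\{\D_E(\lambda) - TE\} = \sup_{E \geq \uE}\sup_{\phi \in {\cal L}_E}\left\{\int_M \phi\, d\lambda - TE\right\}.
\]
I would then exchange the order of the two suprema to obtain
\[
\sup_{\phi \in \bigcup_{E \geq \uE}{\cal L}_E}\ \sup_{E \geq \uE,\ \phi \in {\cal L}_E}\left\{\int_M \phi\, d\lambda - TE\right\}.
\]
For a fixed $\phi$ in the union, the set $\{E \geq \uE : \phi \in {\cal L}_E\}$ equals the closed half-line $[\oH(\phi), \infty)$; since $T > 0$, the inner supremum is attained at $E = \oH(\phi)$, producing $-T\oH(\phi) + \int_M \phi\, d\lambda$.

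Substituting back, the right-hand side reduces to $\sup_\phi\{-T\oH(\phi) + \int_M \phi\, d\lambda\}$, where $\phi$ ranges over $\bigcup_{E \geq \uE}{\cal L}_E \subset Lip(M)$ (using \rcor{corlip}). Enlarging this supremum to all of $Lip(M)$ is harmless, since any Lipschitz $\phi$ outside the union contributes $-\infty$ under the convention $\oH(\phi) = +\infty$ and therefore does not affect the value. This identifies the expression with $\oH_T^*(\lambda)$ as given by (\ref{H*def}). No substantial obstacle is expected here; the only care required is to justify the interchange of the two suprema and to confirm that the extension of $\oH$ to $Lip(M)$ behaves as claimed, both of which are straightforward consequences of \rlemma{6.3} and \rcor{diff}.
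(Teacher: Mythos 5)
Your proposal is correct and follows essentially the same route as the paper: the paper's proof also writes $-T\oH(\phi)$ as $\sup_{E\geq\uE}\{-TE \ ; \ \phi\in{\cal L}_E\}$ via Lemma~\ref{6.3}, interchanges the two suprema, and invokes the duality (\ref{dualdefD}) to identify the inner supremum over $\phi\in{\cal L}_E$ with $\D_E(\lambda)$. Your additional care about the attainment of $\oH(\phi)$ and the convention $\oH(\phi)=+\infty$ outside $\bigcup_E{\cal L}_E$ is consistent with, and slightly more explicit than, what the paper records.
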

 \begin{proof}
 By definition of $\overline{H}^*$ and Lemma~\ref{6.3},
 \begin{multline}\label{chain1} \oH_T^*(\lambda)=
 \sup_{\phi\in Lip(M)}\left[
 \int_M \phi d\lambda- T\oH(\phi)
 \right] =
\sup_{\phi\in Lip(M)}\sup_{E\geq \underline{E}}\left[\int_M \phi d\lambda-TE
 \ ; \
 \phi\in {\cal L}_E\right]
 \\
 = \sup_{E\geq \underline{E}}\sup_{\phi\in Lip(M)}\left[ \int_M\phi d\lambda-TE \ ; \ \phi\in {\cal L}_E\right] = \sup_{E\geq \underline{E}}\left\{\D_E(\lambda)-TE\right\}, \end{multline}
 where we used the duality relation given by (\ref{dualdefD}).
\end{proof}

\begin{corollary}\label{corh*}
$\overline{H}_T^*$ is weakly continuous on ${\cal M}_0$.
\end{corollary}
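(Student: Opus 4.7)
The plan is to combine Proposition~\ref{cormain} with a compactness argument on both the maximizing energy $E$ and the corresponding potential $\phi$. By (\ref{W1e}), write
$$\oH_T^*(\lambda)= \sup_{E\geq \uE}\{\D_E(\lambda)-TE\}.$$

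\textbf{Lower semicontinuity.} I first argue that for each fixed $E\geq \uE$, the functional $\lambda\mapsto \D_E(\lambda)$ is weakly continuous on $\mm$. Weak lower semicontinuity is immediate from the dual formula (\ref{dualdefD}), which exhibits $\D_E(\cdot)$ as the supremum of the weakly continuous linear functionals $\lambda\mapsto \int\phi\,d\lambda$ over $\phi\in {\cal L}_E$. Upper semicontinuity follows by picking near-optimal $\phi_n^*\in {\cal L}_E$ for $\lambda_n$ normalized by $\phi_n^*(x_0)=0$: they form an equi-Lipschitz, uniformly bounded family, so Arzel\`a--Ascoli yields a uniform subsequential limit $\phi^*\in {\cal L}_E$ (the set is closed in $C(M)$), and uniform convergence of $\phi_n^*$ against weakly convergent $\lambda_n$ gives $\int \phi_n^*\,d\lambda_n\to \int\phi^*\,d\lambda\leq\D_E(\lambda)$. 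Then $\oH_T^*$ is weakly lsc as a supremum of weakly continuous functions.

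\textbf{Localizing the sup in $E$.} For upper semicontinuity, suppose $\lambda_n\to\lambda$ weakly. Banach--Steinhaus gives $K:=\sup_n\|\lambda_n\|_{TV}<\infty$, and the Lipschitz estimate preceding (\ref{Dgestim}), applied through the dual (\ref{dualdefD}) after normalizing $\phi(x_0)=0$, yields the uniform bound
$$|\D_E(\lambda_n)|\leq K\,\mathrm{diam}(M)\,\hat h^{-1}(E+C).$$
Superlinearity of $\hat h$ forces $\hat h^{-1}(E+C)/E\to 0$ as $E\to\infty$, so $\D_E(\lambda_n)-TE\to -\infty$ uniformly in $n$. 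On the other hand, $\oH_T^*(\lambda_n)\geq \D_{\uE}(\lambda_n)-T\uE$ is uniformly bounded below by the same estimate at $E=\uE$. Choosing approximate maximizers $E_n\geq\uE$ with $\oH_T^*(\lambda_n)\leq \D_{E_n}(\lambda_n)-TE_n+1/n$, the sequence $\{E_n\}$ is therefore bounded; pass to a subsequence with $E_n\to E^*\geq \uE$.

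\textbf{Passing to the limit.} Pick near-optimal $\phi_n\in {\cal L}_{E_n}$ with $\phi_n(x_0)=0$. By the uniform Lipschitz bound (now uniform in $n$ since $\{E_n\}$ is bounded), Arzel\`a--Ascoli produces a uniform subsequential limit $\phi_n\to\phi^*$. Taking $n\to\infty$ in $\phi_n(x)-\phi_n(y)\leq D_{E_n}(x,y)$ and invoking continuity of $D_E$ in $E$ (Corollary~\ref{diff}) shows $\phi^*\in {\cal L}_{E^*}$. Uniform convergence of $\phi_n$ against weakly convergent $\lambda_n$ gives $\int \phi_n\,d\lambda_n\to \int\phi^*\,d\lambda\leq \D_{E^*}(\lambda)$, hence
$$\limsup_{n}\oH_T^*(\lambda_n)\leq \D_{E^*}(\lambda)-TE^*\leq \oH_T^*(\lambda),$$
which combined with the lsc proved above gives continuity. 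The principal obstacle is the boundedness of the maximizing energies $E_n$; once that is in hand, Arzel\`a--Ascoli together with the joint regularity of $\D_E$ in $(E,\lambda)$ delivers the conclusion.
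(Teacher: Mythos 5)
Your proof is correct, and it follows the same overall architecture as the paper's: continuity of $\D_E(\cdot)$ on $\mm$ for each fixed $E$, plus the coercivity estimate behind (\ref{Dgestim}) to confine the maximizing energies to a compact interval, and then passage to the limit in $\sup_{E\geq\uE}\{\D_E(\lambda)-TE\}$. The differences are in how the sub-steps are justified. The paper obtains upper semicontinuity of $\lambda\mapsto\D_E(\lambda)$ from the primal (Kantorovich) formulation (\ref{mk}) and lower semicontinuity from the dual (\ref{dualdefD}), and then disposes of the joint dependence on $(E,\lambda)$ by remarking that $E\mapsto\D_E(\lambda)$ is finite and concave, hence locally uniformly continuous in $E$ ("mutual continuity"). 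You instead extract \emph{both} semicontinuities from the dual formulation, using Corollary~\ref{corlip} and Arzel\`a--Ascoli on normalized near-optimal potentials in ${\cal L}_E$; and you replace the concavity/mutual-continuity step by a direct compactness argument on the joint sequence $(E_n,\phi_n,\lambda_n)$, using Corollary~\ref{diff} to pass the constraint $\phi_n\in{\cal L}_{E_n}$ to the limit. Your route is more self-contained (it never needs stability of optimal plans under weak convergence of the marginals, which the paper's one-line appeal to (\ref{mk}) implicitly uses, nor the concavity of $\D_E$ in $E$), at the cost of being longer; the paper's route is shorter but leans on those two facts without detail. One small point worth making explicit in your write-up: the two-sided bound $|\D_E(\lambda_n)|\leq K\,\mathrm{diam}(M)\,\hat h^{-1}(E+C)$ requires knowing ${\cal L}_E\neq\emptyset$ for the lower bound (e.g.\ $\phi(\cdot)=D_E(\cdot,x_0)\in{\cal L}_E$ by the triangle inequality), but this is routine and does not affect the argument.
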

\begin{proof}
For each $E\geq \uE$, the Monge-Kantorovich metric $\D_E:\mm\rightarrow \R$ is continuous on $\mm$ (under weak* topology).
Indeed, it is u.s.c. by  (\ref{mk}) and l.s.c. by the dual formulation (\ref{dualdefD}).

Also, for each $\lambda\in\ooM$, $\D_E(\lambda)$ is concave and finite in $E$  for $E\geq \uE$. It follows  that $\D$ is mutually continuous on $[\uE, \infty[ \times \mm$.
From (\ref{Dgestim}) we also get that $\D$ is coercive on $\mm$, that is $\lim_{E\rightarrow\infty} E^{-1}\D_E(\lambda)=0$
locally uniformly on $\mm$. These imply that
$\oH^*_T$ is continuous on $\mm$ via (\ref{W1e}).
\end{proof}

We return now to Corollary ~\ref{diff} and Lemma~\ref{time}. It follows that  for any countable dense set $A\subset M$ there exists a (possibly empty) set $N\subset ]\uE, \infty[$ of zero Lebesgue measure such that   $D_E(x,y)$ is differentiable in $E\in ]\uE, \infty[ - N$, for {\it any} $x,y\in A$. Let ${\cal M}(A)\subset {\cal M}_0$ be the set of all measures in ${\cal M}_0$ which are supported on a {\it finite} subset of $A$, and such that $\lambda(\{x\})$ is rational for any $x\in A$. Again, since ${\cal M}(A)$ is countable, it follows by Corollary~\ref{diff} that $\D_E(\lambda)$ is differentiable (as a function of $E$) for any $\lambda\in {\cal M}(A)$ and any $E\in ]\uE, \infty[ - N$ for a (perhaps larger) set $N$ of zero Lebesgue measure. It is also evident that ${\cal M}_0$ is the weak closure of ${\cal M}(A)$.

\begin{lemma}\label{mongk}
For any $\lambda^+-\lambda^-\equiv \lambda\in {\cal M}(A)$ and $E\in ]\uE, \infty[ - N$, there exists an optimal plan $\Lambda^\lambda_E\in{\cal P}(\lambda^+,\lambda^-)$ realizing
\be\label{mk1} \int_{M\times M} D_E(x,y) d\Lambda^\lambda_E(x,y) = \min_{\Lambda\in {\cal P}(\lambda^+,\lambda^-)} \int_{M\times M} D_E(x,y) d\Lambda(x,y)\equiv \D_E(\lambda)\ee
for which
\be\label{tineq0} \frac{d}{dE} \D_E(\lambda)=\sum_{x,y\in A}\Lambda^\lambda_E(\{x,y\}) T_E(x,y) \ . \ee
  \end{lemma}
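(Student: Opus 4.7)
The plan is the following. Since $\lambda \in {\cal M}(A)$, both $\lambda^+$ and $\lambda^-$ are supported on finite subsets of $A$, so every $\Lambda \in {\cal P}(\lambda^+, \lambda^-)$ is automatically concentrated on the finite set $\mathrm{supp}(\lambda^+) \times \mathrm{supp}(\lambda^-) \subset A \times A$. Consequently ${\cal P}(\lambda^+, \lambda^-)$ reduces to a compact polytope in a finite-dimensional space, cut out by the marginal constraints, and continuity of $D_E$ on this finite set gives the existence of an optimal plan $\Lambda^\lambda_E$ realizing (\ref{mk1}).

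The identity (\ref{tineq0}) would then follow from an envelope argument. Fix $E \in ]\uE, \infty[ - N$ and an optimizer $\Lambda^\lambda_E$ at $E$. Set
$$g(E') := \int_{M\times M} D_{E'}(x,y)\, d\Lambda^\lambda_E(x,y) = \sum_{x,y \in A} D_{E'}(x,y)\, \Lambda^\lambda_E(\{x,y\}).$$
This is a finite sum of terms $D_{E'}(x,y)$ with $(x,y) \in A \times A$. The choice of $N$ spelled out in the paragraph preceding the lemma ensures that each such $D_{\cdot}(x,y)$ is differentiable at $E$ with derivative $T_E(x,y)$ (\rlemma{time}), so $g$ is differentiable at $E$ with
$$g'(E) = \sum_{x,y \in A} T_E(x,y)\, \Lambda^\lambda_E(\{x,y\}).$$

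To conclude, observe that $\Lambda^\lambda_E$ remains feasible (though generally not optimal) at every other $E'$, hence $g(E') \geq \D_{E'}(\lambda)$ for all $E'$, while optimality at $E$ gives $g(E) = \D_E(\lambda)$. Thus $g - \D_{\cdot}(\lambda)$ is non-negative and vanishes at $E$, which is therefore a global minimum of this difference. Since both $g$ and $\D_\cdot(\lambda)$ are differentiable at $E$ (the enlargement of $N$ is precisely what ensures differentiability of $\D_\cdot(\lambda)$), their derivatives must agree there, yielding (\ref{tineq0}).

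I expect the only delicate point to be the simultaneous differentiability: one needs a single Lebesgue null set $N$ outside of which $D_E(x,y)$ is differentiable for every pair $(x,y)$ appearing in the support of $\Lambda^\lambda_E$ and for every $\lambda \in {\cal M}(A)$ at once. The restriction to measures supported on the countable set $A$ with rational masses is exactly what makes this possible, since countable unions of null sets remain null. Beyond this, the remainder is a routine envelope computation.
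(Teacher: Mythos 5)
Your proof is correct, but it takes a different route from the paper's. The paper establishes (\ref{tineq0}) by working with optimal plans at \emph{nearby} energies: it takes $E_n\searrow E$ and $E^n\nearrow E$, writes the one-sided difference quotients of $\D_E(\lambda)$ against the optimal plans $\Lambda^\lambda_{E_n}$, extracts convergent subsequences to obtain limit plans $\Lambda^\lambda_{E^+}$ and $\Lambda^\lambda_{E^-}$ satisfying the two opposite inequalities
$\frac{d}{dE}\D_E(\lambda)\geq \sum\Lambda^\lambda_{E^+}T_E$ and $\frac{d}{dE}\D_E(\lambda)\leq \sum\Lambda^\lambda_{E^-}T_E$, and finally takes a convex combination of the two limit plans to force equality. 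You instead fix an optimal plan at $E$ itself and run the standard envelope (Fermat) argument: $g(E')-\D_{E'}(\lambda)\geq 0$ with equality at the interior point $E$, and both terms are differentiable there by the choice of $N$, so the derivatives agree. Your version is shorter, avoids the subsequence extraction and the interpolation step entirely, and in fact proves something slightly stronger --- the identity (\ref{tineq0}) holds for \emph{every} optimal plan at such an $E$, not merely for one specially constructed plan --- which is all that is needed in the subsequent use of the lemma (Definition~\ref{defmu} and Lemma~\ref{lemma3.5}). Your finite-dimensional linear-programming observation for existence is also sound (the paper takes existence for granted via weak compactness of ${\cal P}(\lambda^+,\lambda^-)$), with the common implicit understanding, shared by the paper's own sums over $x,y\in A$, that $\lambda^\pm$ are themselves finitely supported on $A$. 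The one point you flag as delicate --- a single null set $N$ working simultaneously for all pairs in $A$ and all $\lambda\in{\cal M}(A)$ --- is exactly what the paragraph preceding the lemma arranges by countability, so there is no gap.
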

\begin{proof}
Let $E_n\searrow E$. For each $n$, set $\Lambda_{E_n}^\lambda$ be a minimizer of  (\ref{mk1}) subjected to $E=E_n$.   We choose a subsequence so that the
limit
\begin{equation}\label{limae} \Lambda_{E^+}^\lambda(\{x,y\}):= \lim_{n\rightarrow\infty} \Lambda_{E_n}^\lambda(\{x,y\})\end{equation} exists for any $x,y\in A$.
Evidently,  $ \Lambda_{E^+}^\lambda\in {\cal P}(\lambda^+,\lambda^-)$ is an optimal plan for (\ref{mk1}).
Next,

 $$\D_{E_n}(\lambda)-\D_E(\lambda) \geq  \sum_{x,y\in A}\Lambda_{E_n}^\lambda(\{x,y\})  \left(
 D_{E_n}(x,y) - D_{E}(x,y)\right)$$
  Divide by $E_n-E>0$ and let $n\rightarrow\infty$, using (\ref{limae}) and (\ref{realE}) we get
 \be\label{tineq1}\frac{d}{dE} \D_E(\lambda) \geq \sum_{x,y\in A}\Lambda_{E^+}^\lambda(\{x,y\})T_E(x,y) \ . \ee
 We repeat the same argument for a sequence $E^n \nearrow E$ for which $$ \Lambda_{E^-}^\lambda(\{x,y\}):= \lim_{n\rightarrow\infty} \Lambda_{E_n}^\lambda(\{x,y\}) $$
 and get
 \be\label{tineq2} \frac{d}{dE} \D_E(\lambda) \leq \sum_{x,y\in A}\Lambda_{E^-}^\lambda(\{x,y\})T_E(x,y) \ . \ee
 Again $\Lambda_{E^-}^\lambda$ is an optimal plan as well. If $\Lambda_{E^-}^\lambda=\Lambda_{E^+}^\lambda$ then we are done. Otherwise, define $\Lambda_{E^-}^\lambda$ as a convex combination of $\Lambda_{E^-}^\lambda$ and $\Lambda_{E^+}^\lambda$ for which the equality (\ref{tineq0}) holds due to (\ref{tineq1}, \ref{tineq2}).
 \end{proof}
Given $x,y\in M$, let $E$ be a point of differentiability of $D_E(x,y)$, and $\vec{z}^E_{x,y}:[0,1]\rightarrow M$ a geodesic arc connecting $x,y$ and realizing (\ref{realE}). Then $d\tau^E_{x,y}:= \sigma^{'}_E\left(\vec{z}^E_{x,y}, \dot{\vec{z}}^E_{x,y}\right) ds$ is a non-negative measure on $[0,1]$,  and  (\ref{realE}) reads $T_E(x,y)=\int_0^1 d\tau^E_{x,y}$. Let
$\mu^E_{x,y}$ be the measure on $M$ obtained by pushing $\tau^E_{x,y}$ from $[0,1]$ to $M$ via $\vec{z}^E_{x,y}$:
$$\mu^E_{x,y}:= \left(\vec{z}^E_{x,y}\right)_\# \tau^E_{x,y}\in {\cal M}^+ \ , $$
that is, for any $\phi\in C(M)$,
\be\label{muedef} \int_M \phi d \mu^E_{x,y}  := \int_0^1\phi\left(\vec{z}^E_{x,y}(t)\right) d\tau^E_{x,y} \ . \ee
\begin{definition}\label{defmu}
For any $\lambda\in {\cal M}(A)$  and $E\in ]\uE,\infty[ - N$ let
$$ \mu^E_\Lambda:= \sum_{x,y\in A}\Lambda_E^\lambda(\{x,y\})\mu^E_{x,y}$$
where $\mu^E_{x,y}$ are as given in  (\ref{muedef}) and $\Lambda^\lambda_E$ is the particular optimal plan given in Lemma~\ref{mongk}.
\end{definition}
\begin{remark}\label{remnor}Note that $\int_M d\mu^E_\Lambda = \D^{'}_E(\lambda)$ for any $\lambda\in {\cal M}_0(A)$ and $E\in ]\uE, \infty[ - N$ by Lemma~\ref{mongk}, where $\D^{'}_E(\lambda)= (d/dE)\D_E(\lambda)$.
\end{remark}
\begin{definition}
For any  $\lambda\in {\cal M}_0$, $T>0$,
 $E(\lambda,T)$ is the maximizer of (\ref{W1e}), that is
$$ \D_{E(\lambda,T)}(\lambda) - TE(\lambda,T)\equiv  \oH_T^*(\lambda) \ . $$
\end{definition}
By Corollary~\ref{diff} (in particular, the concavity of $\D_E(\lambda)$ with $E$) we obtain
\begin{lemma}\label{2side}
If $E(\lambda,T)>\uE$ then
$$\left. \frac{d^+}{dE}\D_E(\lambda)\right|_{E=E(\lambda,T)}\leq T \leq \left. \frac{d^-}{dE}\D_E(\lambda,T)\right|_{E=E(\lambda,T)}$$
where $d^+/dE$ (res. $d^-/dE$) stands for the right (res. left) derivative. If $E(\lambda,T)=\uE$ then
$$\left. \frac{d^+}{dE}\D_E(\lambda)\right|_{E=\uE}\leq T \ . $$
\end{lemma}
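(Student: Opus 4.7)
The plan is to treat $E(\lambda,T)$ as the maximizer on the interval $[\uE,\infty)$ of the scalar function
\[
g(E):= \D_E(\lambda) - TE,
\]
and extract the one-sided derivative conditions from concavity. By Corollary~\ref{diff}, $E\mapsto\D_E(\lambda)$ is concave and finite on $[\uE,\infty)$, hence so is $g$. In particular, at every point of $(\uE,\infty)$ the left and right derivatives of $\D_E(\lambda)$ exist and satisfy $\D_E^{\prime +}(\lambda)\leq \D_E^{\prime -}(\lambda)$, and at $E=\uE$ the right derivative $\D_E^{\prime +}(\lambda)|_{E=\uE}$ exists (possibly $+\infty$, as already noted in the earlier Remark).

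First I would handle the interior case $E(\lambda,T)>\uE$. Being a maximizer of the concave $g$ at an interior point, the standard one-sided optimality conditions force $g^{\prime +}(E(\lambda,T))\leq 0$ and $g^{\prime -}(E(\lambda,T))\geq 0$, which directly unpack into
\[
\D_E^{\prime +}(\lambda)|_{E=E(\lambda,T)} \leq T \leq \D_E^{\prime -}(\lambda)|_{E=E(\lambda,T)}.
\]
If either inequality failed, one could perturb $E$ by a small amount in the appropriate direction and strictly increase $g$, contradicting maximality.

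Second I would handle the boundary case $E(\lambda,T)=\uE$. Since only perturbations $E=\uE+h$ with $h>0$ are admissible, maximality yields only the one-sided condition $g^{\prime +}(\uE)\leq 0$, i.e.\ $\D_E^{\prime +}(\lambda)|_{E=\uE}\leq T$. This is exactly the second assertion.

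There is no serious obstacle here; the statement is essentially the subdifferential characterization of maximizers for a concave function on a half-line, and all the analytic input (concavity, finiteness, and existence of the relevant one-sided derivatives of $\D_E(\lambda)$ on $[\uE,\infty)$) has already been deposited in Corollary~\ref{diff} together with the Remark guaranteeing $\D_{\uE}(\lambda)>-\infty$. The only mild care needed is to note that $\D_E^{\prime +}(\lambda)|_{E=\uE}$ may equal $+\infty$, in which case the boundary inequality is vacuously satisfied and consistent with the interior inequality degenerating as $E\downarrow\uE$.
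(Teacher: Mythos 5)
Your proof is correct and follows exactly the paper's own (essentially unstated) argument: the paper derives Lemma~\ref{2side} directly from the concavity of $E\mapsto\D_E(\lambda)$ established in Corollary~\ref{diff}, via the standard one-sided first-order optimality conditions for a concave function maximized on the half-line $[\uE,\infty)$. You have simply written out the elementary details the paper leaves implicit, including the correct treatment of the boundary case $E(\lambda,T)=\uE$ and the possibility that $\D_E^{\prime +}(\lambda)|_{E=\uE}=+\infty$.
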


\subsection
{ Proof of Theorem~\ref{th1} \ (\ref{1} $\leftrightarrows$\ref{hcT}) }
First we note that it is enough to assume $T=1$. Consider
\be\label{calF} {\cal F}(\mu,\phi):=  \int_M-h(x, d\phi)d\mu + \phi d\lambda  \  \ee
where $\lambda\in\mm$ is prescribed.
Evidently, ${\cal F}$ is convex lower semi continuous (l.s.c) in $\mu$ on  $\ooM$ and concave upper semi continuous (u.s.c) in $\phi$ on $C^1(M)$. Since $\ooM$ is compact, the Minimax Theorem implies
 \be\label{minmaxF}\sup_{\phi\in C^1(M)}\min_{\mu\in \ooM} {\cal F}(\mu,\phi)= \min_{\mu\in \ooM} \sup_{\phi\in C^1(M)}{\cal F}(\mu,\phi) \ . \ee
Next define
$$ {\cal G}(\nu, \phi):= \int_{TM} \left( l(x,v)- \langle d\phi,  v\rangle \right) d\nu +\int_M \phi d\lambda \ . $$
on  $\ooM(TM)\times C^1(M)$. Then (recall (\ref{closedmslambda}))
 \be\label{minmax2}\sup_{\phi\in C^1(M)} \inf_{\nu\in \ooM(TM)} {\cal G}(\nu,\phi) \leq \inf_{\nu\in{\cal M}_\lambda} \int_{TM} l(x,v) d\nu\equiv \hC(\lambda) \ . \ee
Now
$$ \overline{\cal G}(\nu):= \sup_{\phi\in C^1(M)} {\cal G}(\nu, \phi) \equiv \left\{ \begin{array}{cc}
                                                                                       \int_{TM} l(x,v) d\nu  & \text{if} \ \nu\in {\cal M}_\lambda \\
                                                                                       \infty  &  \text{if} \ \nu\not\in {\cal M}_\lambda
                                                                                     \end{array}\right. \ .  $$
We recall, again, from the Minmax Theorem that the inequality in (\ref{minmax2}) turns into an equality provided
the set $  \{ \nu\in \ooM(TM); \ \ \overline{\cal G}(\nu) \leq \hC(\lambda)\}$ is compact.  However $\hC(\nu)<\infty$ by Lemma~\ref{5.1}. Since $l$ is super linear in $v$ uniformly in $x$ (see section~\ref{notation}-\ref{hamdes})
it follows that the sub-level set $\{\nu\in {\cal M}_\lambda; \ \int_{TM} l(x,v) d\nu\leq C < \infty\}$ is tight for any constant $C$, hence  compact.

Next
 \begin{multline}\label{chain} \int_{TM}\left(l(x,v) -\langle d\phi, v\rangle\right)d\nu(x,v)+ \int_M\phi d\lambda
  \\
= \int_{M}\phi d\lambda -h(x, d\phi)d\mu+ \int_{TM}\left(l(x,v) -\langle d\phi, v\rangle+h(x,d\phi)\right)d\nu(x,v)
 . \end{multline}
 where $\mu = \Pi_\# \nu$.
By the Young inequality  $l(x,v)+h(x,\xi) \geq \langle \xi, v\rangle_{(x)}$  for any $\xi\in T_x^*M$, $v\in T_xM$ with equality if and only if
$v= h_\xi(x,d\phi(x))$. So, the second term on the right of (\ref{chain}) is non-negative, but, for any $\mu\in\ooM$
$$ \inf_{\nu}\left\{ \int_{TM}\left(l(x,v) -\langle d\phi, v\rangle\right)d\nu(x,v)\ \ ; \ \nu\in \ooM(TM) \ , \Pi_\#\nu=\mu\right\}= -\int_{M}h(x, d\phi)d\mu$$
is realized for $\nu=\delta_{v-h_\xi(x,d\phi(x))}\oplus \mu\in \ooM(TM)$.  From this and (\ref{chain}) we obtain
$$ \inf_{\nu\in\ooM(TM)} {\cal G}(\nu, \phi)= \inf_{\mu\in \ooM} {\cal F}(\phi,\mu)$$
hence
$$ \sup_{\phi\in C^1(M)}\inf_{\nu\in\ooM(TM)} {\cal G}(\nu, \phi)= \sup_{\phi\in C^1(M)}\inf_{\mu\in\ooM}{\cal F}(\phi,\mu)= \hC(\lambda)$$
and this part of the Theorem follows from (\ref{minmaxF}).

 $\Box$

\subsection{Proof of Theorem \ref{th1}:(\ref{hcT}$\leftrightarrows$\ref{hcT1}) }
We now define, for {\it any} $\lambda\in {\cal M}_0$, a measure $\mu_\lambda\in \ooM$ in the following way:

Assume, for now, that $\lambda\in {\cal M}(A)$. If $E(\lambda,T)\in ]\uE, \infty[-N$ then define $\mu_\lambda=\mu^{E(\lambda,T)}_\Lambda$ according to Definition~\ref{defmu}. Otherwise, fix a sequence  $E^n\in ]\uE, \infty[ - N$ such that $E^n\searrow E(\lambda,T)$.
Similarly, let $E_n\in ]\uE, \infty[ - N$ such that $E_n\nearrow E(\lambda,T)$.

Then $\mu_{\Lambda_n}^{E^n}$ and $\mu_{\Lambda_n}^{E_n}$ are given by Definition~\ref{defmu} for any $n$. Let  $\mu_\lambda^+$ be a weak limit of the sequence $\mu_{\Lambda_n}^{E^n}$, and, similarly, $\mu_\lambda^-$ be a weak limit of the sequence $\mu_{\Lambda_n}^{E_n}$.

 By Lemma~\ref{2side} and Remark~\ref{remnor} we get
 \be\label{mupm} \int_M d\mu_\lambda^+\leq T \leq \int_M d\mu_\lambda^-  \ . \ee If $E(\lambda,T)=\uE$ then we can still define $\mu_\lambda^+$, and it satisfies the left inequality of (\ref{mupm}).

\begin{definition}\label{mulambda}
For any $\lambda\in {\cal M}_0$, let $\mu_\lambda$ defined in the following way:
\begin{description}
\item {i)} If $\lambda\in {\cal M}_0(A)$ then
\begin{itemize}
\item If $E(\lambda,T)>\uE$ then $\mu_\lambda$ is a convex combination of $T^{-1}\mu_\lambda^+, T^{-1}\mu_\lambda^-$ given by (\ref{mupm}) such that $\mu_\lambda\in \ooM$ (that is, $\int d\mu_\lambda,=1$).
    \item If $E(\lambda,T)=\uE$ then
    \be\label{part3} \mu_\lambda= T^{-1}\mu_\lambda^+ + \left( 1-T^{-1}\int_M d\mu_\lambda^+\right) \mu_M\ee
    where $\mu_M$ is a projected Mather measure.
\end{itemize}
\item{ii)} For $\lambda\not\in {\cal M}_0(A)$, let $\lambda_n\in {\cal M}_0(A)$ be a sequence converging weakly to $\lambda$. Then $\{\mu_\lambda\}$ is the set of weak limits of the sequence $\mu_{\lambda_n}$.
\end{description}
\end{definition}
Define
 \be\label{calEdef}{\cal Q}(\lambda, \mu):= \sup_{
\phi\in C^1(M)}\left\{ -\int_M h(x, d\phi) d\mu
 +\int_M \phi d\lambda \right\}
  \in \R\cup\{\infty\} \  , \ \ {\cal Q}_T(\lambda,\mu):= {\cal Q}(\lambda, T\mu) \ .  \ee
  Recall from \ref{1}$\leftrightarrows$\ref{hcT} that
  \be\label{11} \hC_T(\lambda)=\inf_{\mu\in \ooM}{\cal Q}_T(\lambda, \mu)\equiv \inf_{\mu\in \ooM}{\cal Q}(\lambda, T\mu) \ . \ee
  Also, from (\ref{H*def}), (\ref{oHdef}) and Proposition~\ref{cormain}
  \be\label{22}  \oH_T^*(\lambda) \leq {\cal Q}_T(\lambda,\mu) \ \ \   \forall \mu\in\ooM \ .  \ee
  We have to show that
  \be\label{oh=cale}\oH_T^*(\lambda)=\inf_{\mu\in\ooM} {\cal Q}_T(\lambda,\mu) \  \ee
  for any $\lambda\in \mm$.
  It is enough to prove (\ref{oh=cale}) for a dense set of in ${\cal M}_0$, say for any $\lambda\in {\cal M}_0(A)$.  Suppose (\ref{oh=cale}) holds for a sequence $\{\lambda_n\}\subset {\cal M}_0(A)$ converging weakly to
  $\lambda\in {\cal M}_0$, that is, $\oH_T^*(\lambda_n)=\hC_T(\lambda_n)$. Since $\oH_T^*$ is weakly continuous by Corollary~\ref{corh*} we get
  $\oH_T^*(\lambda)=\lim_{n\rightarrow\infty}\oH_T^*(\lambda_n)$. On the other hand we recall that, according to definition~\ref{hcT} of Theorem~\ref{th1},  $\hC_T: {\cal M}_0 \mapsto \R$ is l.s.c. So $\lim_{n\rightarrow\infty}
  \hC_T(\lambda_n)\geq \hC_T(\lambda)$, hence $\oH_T^*(\lambda)\geq \hC_T(\lambda)$. By (\ref{11}, \ref{22})
  we get (\ref{oh=cale}) for {\it any} $\lambda\in\mm$.
    \par
  The proof of $\ref{hcT}\leftrightarrows\ref{hcT1}$ then follows from
\begin{lemma}\label{lemma3.5}
For any $\lambda\in {\cal M}_0(A)$
\be\label{ntp}{\cal Q}_T(\lambda, \mu_\lambda)=\oH_T^*(\lambda) \  \ee
holds where $\mu_\lambda\in \ooM$ is as given in Definition~\ref{mulambda}.
\end{lemma}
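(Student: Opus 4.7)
The plan is to establish ${\cal Q}_T(\lambda,\mu_\lambda)\le \oH_T^*(\lambda)$; the reverse inequality is already given by (\ref{22}). The key is a pointwise Fenchel-type bound arising from concavity of $E\mapsto \sigma_E(x,v)$, which in turn follows from convexity of $h(x,\cdot)$: if $h(x,\xi_i)\le E_i$, then $h(x,\alpha\xi_1+(1-\alpha)\xi_2)\le \alpha E_1+(1-\alpha)E_2$, so combining near-maximizers in (\ref{sigmadef}) gives $\sigma_{\alpha E_1+(1-\alpha)E_2}(x,v)\ge \alpha\sigma_{E_1}(x,v)+(1-\alpha)\sigma_{E_2}(x,v)$. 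The supporting-line inequality for a concave function then reads $\sigma_{E'}(x,v)\le \sigma_E(x,v)+\sigma'_E(x,v)(E'-E)$ at any $E$ of differentiability, and substituting $E'=h(x,\xi)$ together with the trivial bound $\langle\xi,v\rangle\le \sigma_{h(x,\xi)}(x,v)$ produces the key inequality
\begin{equation*}
\langle\xi,v\rangle\le \sigma_E(x,v)+\sigma'_E(x,v)\bigl(h(x,\xi)-E\bigr),\qquad \forall\,(x,\xi,v).
\end{equation*}

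Applying this with $\xi=d\phi(\vec{z}(s))$, $v=\dot{\vec{z}}(s)$ along the geodesic $\vec{z}=\vec{z}^E_{x,y}$ realizing (\ref{DEdef2}) at an $E\in\,]\uE,\infty[\,\setminus N$ of differentiability, then integrating over $s\in[0,1]$ and invoking (\ref{DEdef2}), (\ref{realE}) and (\ref{muedef}) yields
\begin{equation*}
\phi(y)-\phi(x)\le D_E(x,y)+\int_M h(z,d\phi(z))\,d\mu^E_{x,y}(z)-E\,T_E(x,y).
\end{equation*}
Integrate this inequality against the optimal plan $\Lambda^\lambda_E$ supplied by Lemma~\ref{mongk}: the left assembles to $\int_M\phi\,d\lambda$, the cost term to $\D_E(\lambda)$ via (\ref{mk1}), and the geodesic measures combine into $\mu^E_\Lambda$ of Definition~\ref{defmu} with total mass $\D'_E(\lambda)$ by Remark~\ref{remnor}. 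Hence for every $\phi\in C^1(M)$ and $E\in\,]\uE,\infty[\,\setminus N$,
\begin{equation*}
\int_M\phi\,d\lambda-\int_M h(z,d\phi)\,d\mu^E_\Lambda\le \D_E(\lambda)-E\int_M d\mu^E_\Lambda.
\end{equation*}

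It remains to specialize to $E=E(\lambda,T)$ along the cases of Definition~\ref{mulambda}. If $E(\lambda,T)>\uE$, choose $E^n\searrow E(\lambda,T)$ and $E_n\nearrow E(\lambda,T)$ inside $]\uE,\infty[\,\setminus N$ and pass to weak limits: continuity of $\D_E(\lambda)$ in $E$ ensures the previous display survives with $\mu^E_\Lambda$ replaced by $\mu_\lambda^+$ (mass $\le T$) or by $\mu_\lambda^-$ (mass $\ge T$). Taking the convex combination that defines $T\mu_\lambda$ in Definition~\ref{mulambda} collapses the mass factor on the right to exactly $T$, producing the bound $\D_{E(\lambda,T)}(\lambda)-E(\lambda,T)\,T=\oH_T^*(\lambda)$. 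If $E(\lambda,T)=\uE$, apply the inequality to the $\mu_\lambda^+$ limit alone and append the Mather summand from (\ref{part3}); the dual characterization (\ref{maxham}) of $\mu_M$ as a maximizer of $\inf_\phi\int h(x,d\phi)\,d\mu$ forces $\int_M h(x,d\phi)\,d\mu_M\ge \uE$ for every $\phi\in C^1(M)$, so the contribution of the complementary $(T-\int d\mu_\lambda^+)\mu_M$ is at most $-(T-\int d\mu_\lambda^+)\uE$, and addition again produces $\D_{\uE}(\lambda)-\uE\,T=\oH_T^*(\lambda)$. Taking $\sup_\phi$ gives ${\cal Q}_T(\lambda,\mu_\lambda)\le \oH_T^*(\lambda)$, and (\ref{ntp}) follows on combining with (\ref{22}). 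The main obstacle is this last stage: arranging the mass-balanced convex combinations of Definition~\ref{mulambda} so that the coefficient of $E$ on the right cleanly becomes $T$, and, in the boundary case $E(\lambda,T)=\uE$, invoking the variational role of $\mu_M$ furnished by (\ref{maxham}) to absorb the residual mass.
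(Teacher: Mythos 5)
Your proof is correct and follows the same overall skeleton as the paper's: both arrive at the central estimate $\int_M\phi\,d\lambda-\int_M h(x,d\phi)\,d\mu^E_\Lambda\le \D_E(\lambda)-E\,\D'_E(\lambda)$ for $E\in\,]\uE,\infty[\,\setminus N$ and then run the identical endgame (the mass-balancing convex combination of $\mu_\lambda^\pm$ when $E(\lambda,T)\in N$, and the Mather-measure padding via (\ref{maxham}) when $E(\lambda,T)=\uE$, exactly as in (\ref{lastt})--(\ref{lastagain})). Where you genuinely diverge is in how the per-geodesic inequality is obtained. The paper reparametrizes the arc $\vec{z}^E_{x,y}$ to physical time $[0,T_E(x,y)]$, applies the Young inequality $\langle d\phi,\dot{z}\rangle-h(z,d\phi)\le l(z,\dot z)$, and then uses $D_E=C_{T_E}+ET_E$ from (\ref{de=te}) to reassemble the chain (\ref{longineq}). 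You instead stay entirely on the Hamiltonian side: concavity of $E\mapsto\sigma_E(x,v)$ (from convexity of $h(x,\cdot)$ in (\ref{sigmadef})) gives the supporting-line bound $\langle\xi,v\rangle\le\sigma_{h(x,\xi)}(x,v)\le\sigma_E(x,v)+\sigma'_E(x,v)(h(x,\xi)-E)$, which integrated along the arc realizing (\ref{DEdef2}) yields the same estimate via (\ref{realE}) and (\ref{muedef}), with no Lagrangian, no time change, and no appeal to (\ref{de=te}). The two derivations are Legendre-dual to one another; yours is arguably cleaner in that it makes transparent why the measure $d\tau^E_{x,y}=\sigma'_E\,ds$ is the right weight (it is the slope of the supporting line in $E$), while the paper's version makes visible the dynamical interpretation of $T_E(x,y)$ as a travel time. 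Both are complete; the only point worth flagging is that your supporting-line step tacitly uses that $E'=h(x,d\phi(x))$ lies in the interval where $\sigma_{E'}(x,v)$ is finite and concave, which holds since $Q_x(E')\ni d\phi(x)$ is nonempty there.
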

\begin{proof}

Let $\lambda\in{\cal M}_0(A)$ and $E\in ]\uE, \infty[-N$. Then we use (\ref{muedef}) for {\it any} $\phi\in C^1(M)$
$$ -\int_M h(x, d\phi) d\mu_\Lambda^E= -\sum_{x,y\in A} \Lambda(\{x,y\})\int_0^1h\left(\vec{z}^{E}_{x,y}(s),
d\phi\left(\vec{z}^{E}_{x,y}(s)\right)\right) ds $$
We now perform a change of variables $ds\rightarrow dt=\sigma^{'}_E\left(\vec{z}^{E}_{x,y}(s), \dot{\vec{z}}^{E}_{x,y}(s)\right) ds$ which transforms the interval $[0,1]$ into $[0, T_E(x,y)]$ (see (\ref{realE})) and we get
$$ -\int_M h(x, d\phi) d\mu_\Lambda^E= -\sum_{x,y\in A} \Lambda(\{x,y\})\int_0^{T_E(x,y)}h\left(\widehat{\vec{z}}^{E}_{x,y}(t),
d\phi\left(\widehat{\vec{z}}^{E}_{x,y}(t)\right)\right) dt $$
where $\widehat{\vec{z}}^E_{x,y}$ is the re-parametrization of $\vec{z}^E_{x,y}$, satisfying $\widehat{\vec{z}}^E_{x,y}(0)=x$, $\widehat{\vec{z}}^E_{x,y}(T_E(x,y))=y$. Next
$$\int_M\phi d\lambda= \int_M d\Lambda^E_\lambda(x,y)\left[\phi(y)-\phi(x)\right]
= \sum_{x,y\in A} \Lambda(\{x,y\})\int_0^{T_E(x,y)}d\phi\left(\widehat{\vec{z}}^E_{x,y}(t)\right) \dot{\widehat{\vec{z}}}^E_{x,y}(t) dt  $$
so $\int_M\phi d \lambda- \int_M h(x, d\phi) d\mu_\Lambda^E = $
\begin{multline}\label{longineq} \sum_{x,y\in A} \Lambda^E_\lambda(\{x,y\})\int_0^{T_E(x,y)}
\left[ d\phi\left(\widehat{\vec{z}}^E_{x,y}(t)\right) \dot{\widehat{\vec{z}}}^E_{x,y}(t)-
h\left(\widehat{\vec{z}}^{E}_{x,y}(t),
d\phi\left(\widehat{\vec{z}}^{E}_{x,y}(t)\right)\right)\right] dt\\
\leq \sum_{x,y\in A} \Lambda^E_\lambda(\{x,y\})\int_0^{T_E(x,y)} l\left( \widehat{\vec{z}}^E_{x,y}(t), \dot{\widehat{\vec{z}}}^E_{x,y}(t)\right) dt = \sum_{x,y\in A} \Lambda^E_\lambda(\{x,y\}) C_{T_E(x,y)}(x,y)
\\ =  \sum_{x,y\in A} \Lambda^E_\lambda(\{x,y\})\left[ C_{T_E(x,y)}(x,y) + ET_E(x,y)\right] - E\sum_{x,y\in A} \Lambda^E_\lambda(\{x,y\})T_E(x,y) = \\ \sum_{x,y\in A} \Lambda^E_\lambda(\{x,y\}) D_E(x,y) - E\sum_{x,y\in A} \Lambda^E_\lambda(\{x,y\})T_E(x,y)= \D_E(\lambda)- ED^{'}_E(\lambda)  \ .
\end{multline}
To obtain  (\ref{longineq}) we used the Young inequality  in the second line,  (\ref{de=te})  and (\ref{tineq0}) on the last line.

Since (\ref{longineq}) is valid for any $\phi\in C^1(M)$ we get from this and (\ref{22})  that
 \be\label{last}\D_E(\lambda)- E \D^{'}_E(\lambda) \geq {\cal Q}(\lambda, \mu_\Lambda^E) \geq \oH_T^*(\lambda) = \max_{E\geq \uE} \D_E(\lambda)-TE \ ,  \ee
 holds for {\it any} $E\geq \uE$.
Now, if it so happens that the maximizer $E(\lambda,T)$ on the right of (\ref{last}) is on the complement of the set $N$ in $[\uE,\infty[$, then $D^{'}_E(\lambda)=T=\int_M d\mu^E_\Lambda$ for $E=E(\lambda,T)$ via Lemma~\ref{2side} and the inequality in (\ref{last}) turns into an equality.
  Otherwise, if
$E(\lambda,T)\in N- \{\uE\}$, we take the sequences $E_n\nearrow E(\lambda,T)$, $E^n \searrow E(\lambda,T)$ for $E_n, E^n\in ]\uE, \infty[-N$  and the corresponding limits $\mu^+_\lambda$, $\mu^-_\lambda$ defined in (\ref{mupm}). Since ${\cal Q}_T$ is a convex, l.s.c as a function of $\mu$ we get that the left inequality in (\ref{last}) survives the limit, and
 \be\label{lastt}\D_{E(\lambda,T)}(\lambda)- E(\lambda,T)\frac{d^+}{dE} \D_{E(\lambda,T)}(\lambda) \geq {\cal Q}(\lambda, \mu^+_\lambda) \ \ , \ \  \ \D_{E(\lambda,T)}(\lambda)- {E(\lambda,T)}\frac{d^-}{dE} \D_{E(\lambda,T)}(\lambda) \geq {\cal Q}(\lambda, \mu^-_\lambda) \ , \ee
while $\frac{d^+}{dE} \D_{E(\lambda,T)}(\lambda)= \int d\mu^+_\lambda$ and $\frac{d^-}{dE} \D_{E(\lambda,T)}(\lambda)= \int d\mu^-_\lambda$. Then, upon taking a convex combination $\mu_\lambda=\alpha T^{-1}\mu_\lambda^+ + T^{-1}(1-\alpha)\mu^-_\lambda$ such that, according to Definition~\ref{mulambda},
\be\label{lasttt}\alpha \frac{d^+}{dE} \D_{E(\lambda,T)}(\lambda) + (1-\alpha)\frac{d^-}{dE} \D_{E(\lambda,T)}(\lambda) = T\int d \mu_\lambda =T\ee
and using the convexity of ${\cal Q}$ in $\mu$ we get from (\ref{lastt}, \ref{lasttt})
$$ \D_{E(\lambda,T)}(\lambda)- TE(\lambda,T)\geq {\cal Q}(\lambda, T\mu_\lambda)\equiv {\cal Q}_T(\lambda, \mu_\lambda)$$
This, with the right inequality of (\ref{ntp}) yields the equality ${\cal Q}_T(\lambda, \mu_\lambda)=\oH_T^*(\lambda)$.

Finally, if $E(\lambda,T)=\uE$ we proceed as follows: Let $E^n\searrow \uE$ and $\mu^+_\lambda:=\lim_{n\rightarrow \infty} \mu^{E^n}_\lambda$. It follows that
 \be\label{rrr}\int_M d\mu^+_\lambda = \lim_{n\rightarrow \infty} \int_M d\mu^{E^n}_\lambda = \lim_{n\rightarrow \infty} \D^{'}_{E^n}(\lambda) = \frac{d^+}{dE}\D_{\uE}(\lambda)\in (0, T] \ . \ee
Let $\mu_\lambda$ as in (\ref{part3}).
 From (\ref{calEdef}, , \ref{rrr}) and (\ref{maxham}) we get
\be\label{pth3} {\cal Q}_T(\lambda, \mu_\lambda)\leq {\cal Q}(\lambda, \mu_\lambda^+) + \left(T-\frac{d^+}{dE}\D_{\uE}(\lambda) \right){\cal Q}(0, \mu_M)= {\cal Q}(\lambda,\mu_\lambda^+)-  \left( T- \frac{d^+}{dE}\D_{\uE}(\lambda)\right)
\uE \ee
while (\ref{maxham}) and the left part of (\ref{lastt}) for $E=\uE$ imply
\be\label{lastagain} {\cal Q}(\lambda, \mu_\lambda^+)\leq \D_{\uE}(\lambda) -\uE \frac{d^+}{dE}\D_{\uE}(\lambda) \ . \ee
From
 (\ref{pth3}) and (\ref{lastagain}) we get
 $$ {\cal Q}_T(\lambda,\mu_\lambda)\leq \D_{\uE}(\lambda)-\uE T\leq  \oH^*_T(\lambda) $$
 and the equality holds via (\ref{22}). The last part of Theorem~\ref{th1} follows from the equality in (\ref{22}) as well.
 \end{proof}
\subsection{Proof of Theorem~\ref{th5}}
Theorem~\ref{th1}-(\ref{hcT})  and (\ref{CTmudef}) imply \be\label{hct1}\hC_T(\lambda)=\min_{\mu\in\ooM} \hC_T(\lambda\|\mu) \ . \ee Next, we note that $\D_E(\lambda\|\mu)$ is a concave function of $E$ for $E\geq \uE$. In fact, from  (\ref{HEmore}) and convexity of $h(x,\cdot)$ for each $x\in M$ we obtain
$$\phi_i\in {\cal H}_{E_i} \ \ ,  i=1,2  \ \ \Longrightarrow \alpha\phi_1+ (1-\alpha)\phi_2\in{\cal H}_{\alpha E_1+(1-\alpha)E_2}$$
for $\alpha\in (0,1)$ and $E_1, E_2\geq \uE$. The concavity of $\D_{(\cdot)}(\lambda\|\mu)$ follows from its definition (\ref{Demudef}). Then, by convex duality and (\ref{CTmudef})
$$ \D_E(\lambda\|\mu)=\min_{T>0} \left[\hC_T(\lambda\|\mu) + ET\right] \ . $$
By the same argument
$$ \D_E(\lambda)=\min_{T>0} \left[\hC_T(\lambda) + ET\right] \ . $$
Hence, (\ref{hct1}) and Theorem~\ref{th1}-(\ref{hcT1}) imply
$$ \min_{\mu\in\ooM}\D_E(\lambda\|\mu)= \min_{\mu\in\ooM}\min_{T>0} \left[\hC_T(\lambda\|\mu) + ET\right]$$
$$=
\min_{T>0}\min_{\mu\in\ooM}\left[\hC_T(\lambda\|\mu) + ET\right]= \min_{T>0}\left[\hC_T(\lambda) + ET\right]= \D_E(\lambda) \ .  $$
$\Box$
\section{Proof of Theorems~\ref{th3}\&\ref{th6}}\label{th23}
\subsection{Auxiliary results}
\par\noindent
Lemma~\ref{lsc} follows from the surjectivity of $Exp^{(t)}_l(x)$  as a mapping from $T_xM$ to $M$, for any $x\in M$ and any $t\not= 0$ (Recall definition at Section~\ref{notation}-\ref{Exp}):
\begin{lemma}\label{lsc}
Let $\Lambda\in\oM(M\times M)$. For any $t>0$ there exists a Borel measure $\widehat{\Lambda}^{(t)}\in \oM(TM)$ such that $\left(I\otimes Exp^{(t)}_{(l)}\right)_\# \widehat{\Lambda}^{(t)} = \Lambda$. Here $I\otimes Exp^{(t)}_{(l)}(x,v):= \left(x , Exp^{(t)}_{(l)}(x,v)\right)$.
\end{lemma}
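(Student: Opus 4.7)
Let $\Phi_t := I\otimes Exp^{(t)}_{(l)}: TM\rightarrow M\times M$. The strategy is to produce a measurable section $s:M\times M\rightarrow TM$ of $\Phi_t$, that is, $\Phi_t\circ s = \mathrm{id}_{M\times M}$, and then to define
$$\widehat{\Lambda}^{(t)} := s_{\#}\Lambda.$$
Given such an $s$, the required push-forward identity follows immediately: for every Borel $A\subset M\times M$,
$$(\Phi_t)_{\#}\widehat{\Lambda}^{(t)}(A)=\Lambda(s^{-1}(\Phi_t^{-1}(A)))=\Lambda((\Phi_t\circ s)^{-1}(A))=\Lambda(A).$$
So the entire content of the lemma reduces to producing the section $s$.

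To produce $s$ I would first observe that the fibers $\Phi_t^{-1}(\{(x,y)\})$ are non-empty by the surjectivity of $Exp^{(t)}_{(l)}(x,\cdot):T_xM\rightarrow M$ highlighted just before the lemma. Using the super-linearity of $l$ on the fibers of $TM$ (Section~\ref{notation}-\ref{hamdes}) together with compactness of $M$, one obtains a uniform a~priori bound on the initial velocity of a Tonelli minimizer from $x$ to $y$ in time $t$: the minimal action $C_t(x,y)$ is uniformly bounded above (test, e.g., with a constant-speed geodesic between $x$ and $y$), and standard Lagrangian action estimates then force $|\dot{\vec{z}}(0)|_g\leq R(t)$ independently of $(x,y)$. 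Consequently every fiber of $\Phi_t$ meets the compact set $K_R:=\{(x,v)\in TM:|v|_g\leq R(t)\}$, and the restriction $\Phi_t|_{K_R}:K_R\rightarrow M\times M$ is a continuous surjection between compact metric spaces.

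The multifunction $F(x,y):=\Phi_t^{-1}(\{(x,y)\})\cap K_R$ is closed-valued, non-empty-valued, and upper semi-continuous (because $\Phi_t$ is continuous and $K_R$ is compact), so the Kuratowski--Ryll-Nardzewski measurable selection theorem delivers a Borel measurable selector $s:M\times M\rightarrow K_R\subset TM$ with $\Phi_t(s(x,y))=(x,y)$, completing the construction. The only non-routine ingredient is this measurable selection, but it is applied in its most classical form (closed-valued multifunction on a compact metric target) so no genuine obstacle is expected; alternatively one could skip the compactness reduction and invoke Jankov--von Neumann directly on the Polish space $TM$, producing a universally measurable section, which still suffices for defining the push-forward of a Borel measure.
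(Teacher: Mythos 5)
Your argument is correct and follows the same route the paper takes: the paper simply asserts that the lemma ``follows from the surjectivity of $Exp^{(t)}_{(l)}$'' on each fiber, and your proof supplies exactly the missing details (non-emptiness of the fibers of $I\otimes Exp^{(t)}_{(l)}$, an a priori velocity bound to reduce to a compact-valued upper semi-continuous multifunction, and a measurable selection to push $\Lambda$ forward). Nothing in your write-up deviates from, or goes beyond, what the paper's one-line justification implicitly requires.
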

The proof of Lemma~\ref{partof} follows directly from the definition of the optimal plan:
\begin{lemma}\label{partof}
Let $\Lambda$ be a minimizer for (\ref{kantdef}), $B\subset M\times M$ a Borel subset and $\Lambda\lfloor_B$ the restriction of $\Lambda$ to $B$. Let $\mu^0_B$, $\mu^1_B$ the marginals of $\Lambda\lfloor_B$ on the factors  of $M\times M$. Then $\Lambda\lfloor_B$ is an optimal plan for $\ccC\left(\mu^0_B,\mu^1_B\right)$. In addition, if $B_1, B_2\subset M\times M$ are disjoint Borel sets then
$$  \ccC\left(\mu^0_{B_1},\mu^1_{B_1}\right)+\ccC\left(\mu^0_{B_2},\mu^1_{B_2}\right)= \ccC\left(\mu^0_{B_1}+\mu^0_{B_2},\mu^1_{B_1}+\mu^1_{B_2}\right)$$
and $\Lambda\lfloor_{B_1\cup B_2}$ is the optimal plan with respect to  $\ccC\left(\mu^0_{B_1}+\mu^0_{B_2},\mu^1_{B_1}+\mu^1_{B_2}\right)$.
\end{lemma}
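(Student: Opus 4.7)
The plan is to prove both assertions by a standard cut-and-paste argument: if we had a cheaper plan on $B$, we could glue it to the part of $\Lambda$ living on the complement $B^c$ and produce a competitor that strictly beats $\Lambda$ on the whole $(\mu_0,\mu_1)$ problem. The only things to check are that (i) the marginals of $\Lambda\lfloor_B$ and $\Lambda\lfloor_{B^c}$ really add up to the marginals of $\Lambda$, and (ii) the cost is additive across the partition $B\sqcup B^c$, both of which follow from the Borel measurability of $B$ and the linearity of the integral.

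More precisely, writing $\mu_0,\mu_1$ for the marginals of $\Lambda$ and setting $\mu^0_{B^c},\mu^1_{B^c}$ for the marginals of $\Lambda\lfloor_{B^c}$, I note that $\Lambda=\Lambda\lfloor_B+\Lambda\lfloor_{B^c}$ implies $\mu_i=\mu^i_B+\mu^i_{B^c}$ for $i=0,1$, and
$$\int_{M\times M} C\, d\Lambda=\int_B C\, d\Lambda+\int_{B^c} C\, d\Lambda.$$
If $\Lambda\lfloor_B$ were not optimal in $\mathcal{P}(\mu^0_B,\mu^1_B)$, there would be some $\Lambda'\in\mathcal{P}(\mu^0_B,\mu^1_B)$ with $\int C\, d\Lambda'<\int_B C\, d\Lambda$. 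Then $\tilde\Lambda:=\Lambda'+\Lambda\lfloor_{B^c}$ lies in $\mathcal{P}(\mu_0,\mu_1)$ (by additivity of marginals) and has total cost strictly less than $\int C\, d\Lambda$, contradicting the optimality of $\Lambda$. This gives the first claim.

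For the second claim, I would apply the first part three times. First, to $B:=B_1\cup B_2$: since $B_1,B_2$ are disjoint Borel sets, $\Lambda\lfloor_{B_1\cup B_2}$ is an optimal plan for $\ccC(\mu^0_{B_1}+\mu^0_{B_2},\mu^1_{B_1}+\mu^1_{B_2})$, establishing the second displayed claim. Second and third, to $B_1$ and $B_2$ individually, to get $\int_{B_i}C\, d\Lambda=\ccC(\mu^0_{B_i},\mu^1_{B_i})$. Combining,
$$\ccC\bigl(\mu^0_{B_1}+\mu^0_{B_2},\mu^1_{B_1}+\mu^1_{B_2}\bigr)=\int_{B_1\cup B_2}C\, d\Lambda=\sum_{i=1,2}\int_{B_i}C\, d\Lambda=\ccC(\mu^0_{B_1},\mu^1_{B_1})+\ccC(\mu^0_{B_2},\mu^1_{B_2}),$$
which is the equality to be proved.

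There is no real obstacle: the argument is purely combinatorial once one knows the Kantorovich infimum is attained (which is given here by compactness of $\mathcal{P}(\mu_0,\mu_1)$ in the weak-$*$ topology), and it uses no specific structure of the cost $C$ beyond that it is non-negative and Borel measurable. The only place where one should be mildly careful is verifying that $\Lambda'+\Lambda\lfloor_{B^c}$ really lies in $\mathcal{P}(\mu_0,\mu_1)$; this is immediate from the decomposition $\mu_i=\mu^i_B+\mu^i_{B^c}$ together with the fact that $\Lambda'\in\mathcal{P}(\mu^0_B,\mu^1_B)$.
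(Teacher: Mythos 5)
Your cut-and-paste argument is correct and is exactly the argument the paper has in mind: the paper gives no written proof, stating only that the lemma ``follows directly from the definition of the optimal plan,'' and your restriction-plus-gluing contradiction is the standard way to make that precise. One cosmetic remark: your closing comment that the argument needs $C\geq 0$ is not quite right (the costs $C_T$, $D_E$ here can be negative), but nothing in your proof actually uses non-negativity --- only that $C$ is bounded below on the compact $M\times M$ so all integrals are well defined.
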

Lemma~\ref{partof2} represents the {\it time interpolation} of optimal plans (see \cite{vi1}):
\begin{lemma}\label{partof2}
Given $t>0$ and  $\lambda=\lambda^+-\lambda^-\in\mm$. Let $\Lambda^t\in {\cal P}(\lambda^+,\lambda^-)$ be an optimal plan realizing
$$ \cC_t(\lambda^+,\lambda^-)= \int\int C_t(x,y)\Lambda^t(dxdy)  \ . $$
Let $\widehat{\Lambda}^{(t)}\in \oM(TM)$ given in Lemma~\ref{lsc} for $\Lambda=\Lambda^t$. Let $\lambda_s:= \left(Exp_l^{(s)}\right)_\#\widehat{\Lambda}^{(t)}$. Then, if $0<s<t$,
$$ \cC_s(\lambda^+, \lambda_s) + \cC_{t-s}(\lambda_s, \lambda^-)= \cC_t(\lambda^+, \lambda^-) \ . $$
\end{lemma}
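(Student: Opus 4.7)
The plan is to prove the two inequalities separately. The upper bound $\leq$ uses the lift $\widehat{\Lambda}^{(t)}$ to construct candidate transport plans on the two sub-intervals, while the lower bound $\geq$ is the standard gluing argument together with the sub-additivity of the action $C_t$.

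For the upper bound, I would first observe the marginal identities: $\pi_1\circ(I\otimes Exp_l^{(t)}) = \Pi$ and $\pi_2\circ(I\otimes Exp_l^{(t)}) = Exp_l^{(t)}$, so pushing $\widehat{\Lambda}^{(t)}$ forward gives $\Pi_\#\widehat{\Lambda}^{(t)} = \lambda^+$ and $(Exp_l^{(t)})_\#\widehat{\Lambda}^{(t)} = \lambda^-$. Consequently, $\mu_s := (I\otimes Exp_l^{(s)})_\#\widehat{\Lambda}^{(t)} \in {\cal P}(\lambda^+,\lambda_s)$ and $\nu_s := (Exp_l^{(s)}\otimes Exp_l^{(t)})_\#\widehat{\Lambda}^{(t)} \in {\cal P}(\lambda_s,\lambda^-)$ are admissible plans. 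The key ingredient is that when $\widehat{\Lambda}^{(t)}$ is concentrated on pairs $(x,v)$ for which the Euler--Lagrange trajectory $\tau\mapsto Exp_l^{(\tau)}(x,v)$ is an action-minimizer on $[0,t]$, the Bellman (dynamic programming) principle yields
\[ C_s\bigl(x,Exp_l^{(s)}(x,v)\bigr) + C_{t-s}\bigl(Exp_l^{(s)}(x,v),Exp_l^{(t)}(x,v)\bigr) = C_t\bigl(x,Exp_l^{(t)}(x,v)\bigr). \]
Integrating against $\widehat{\Lambda}^{(t)}$ and bounding the left-hand side below by $\cC_s(\lambda^+,\lambda_s)+\cC_{t-s}(\lambda_s,\lambda^-)$ (since $\mu_s$ and $\nu_s$ are merely admissible), and recognizing the right-hand integral as $\int C_t\,d\Lambda^t = \cC_t(\lambda^+,\lambda^-)$, gives the $\leq$ direction.

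For the lower bound, I would apply the gluing lemma. Given optimal plans $\pi^1\in{\cal P}(\lambda^+,\lambda_s)$ for $\cC_s$ and $\pi^2\in{\cal P}(\lambda_s,\lambda^-)$ for $\cC_{t-s}$, one constructs $\gamma\in{\cal M}^+(M\times M\times M)$ with $(\pi_{12})_\#\gamma = \pi^1$ and $(\pi_{23})_\#\gamma = \pi^2$. Then $(\pi_{13})_\#\gamma\in{\cal P}(\lambda^+,\lambda^-)$, and the sub-additivity $C_t(x,y)\leq C_s(x,z)+C_{t-s}(z,y)$ (immediate from concatenation of admissible paths in ${\cal K}^s_{x,z}$ and ${\cal K}^{t-s}_{z,y}$) yields
\[ \cC_t(\lambda^+,\lambda^-) \leq \int C_t(x,y)\,d\gamma(x,z,y) \leq \int \bigl(C_s(x,z)+C_{t-s}(z,y)\bigr)\,d\gamma = \cC_s(\lambda^+,\lambda_s)+\cC_{t-s}(\lambda_s,\lambda^-). \]

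The main obstacle is the implicit step in the upper bound: one must ensure that the measure $\widehat{\Lambda}^{(t)}$ provided by \rlemma{lsc} is (or can be chosen to be) supported on $(x,v)$ whose Euler--Lagrange trajectory is actually $C_t$-minimizing from $x$ to $Exp_l^{(t)}(x,v)$. Since \rlemma{lsc} merely asserts surjectivity-based existence of \emph{some} lift, I would strengthen the selection by invoking a measurable selection theorem (Jankov--von Neumann or Kuratowski--Ryll-Nardzewski) applied to the closed-graph set-valued map $(x,y)\mapsto \{v\in T_xM : Exp_l^{(t)}(x,v)=y,\ \int_0^t l(z,\dot z)=C_t(x,y)\}$, which is nonempty on the support of the optimal $\Lambda^t$ by Tonelli's theorem on the Euler--Lagrange flow; composing this selector with $\Lambda^t$ produces the desired $\widehat{\Lambda}^{(t)}$. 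Once this selection is in place, the remaining manipulations are routine pushforward computations.
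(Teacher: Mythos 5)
Your proof is correct, and it is worth noting that the paper itself offers no proof of this lemma at all: it simply labels it ``time interpolation of optimal plans'' and cites Villani. Your two-sided argument (gluing plus sub-additivity $C_t(x,y)\leq C_s(x,z)+C_{t-s}(z,y)$ for ``$\geq$''; the lift together with the pointwise dynamic-programming identity along minimizers for ``$\leq$'') is exactly the standard route, and the pushforward bookkeeping for $\mu_s$ and $\nu_s$ is right. More importantly, the obstacle you flag is a genuine defect in the paper's statement, not a quibble: Lemma~\ref{lsc} only asserts that \emph{some} lift exists by surjectivity of $Exp^{(t)}_{(l)}$, and for an arbitrary such lift the conclusion is simply false --- e.g.\ on a flat torus with $l=|v|^2/2$ and $\lambda^+=\lambda^-=\delta_{x_0}$, a lift placing mass on a large velocity $v$ with $Exp^{(t)}_{(l)}(x_0,v)=x_0$ gives $\lambda_s=\delta_{z(s)}\neq\delta_{x_0}$ and a strictly positive left-hand side while $\cC_t(\delta_{x_0},\delta_{x_0})=0$. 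The equality holds only when $\widehat{\Lambda}^{(t)}$ is concentrated on initial conditions of $C_t$-minimizing Euler--Lagrange trajectories, and your repair via Tonelli's theorem plus a measurable selection on the closed (indeed compact, by the a priori velocity bound for Tonelli minimizers on a compact manifold) graph of minimizing velocities is the correct and standard way to obtain such a lift. The same refinement is silently needed where the paper invokes this lemma in the proof of Lemma~\ref{lastlema}, so your observation strengthens the paper rather than merely reproving a cited fact.
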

\begin{lemma}\label{lem60}
For any $\lambda^+, \lambda^-\in \ooM$ satisfying $\lambda=\lambda^+-\lambda^-\in\ooM$,
$$ \ccC_T(\lambda^+,\lambda^-)\geq \hC_T(\lambda) \ .  $$
\end{lemma}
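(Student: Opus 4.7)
The strategy is to convert an optimal Kantorovich plan for $\ccC_T(\lambda^+,\lambda^-)$ into an admissible closed probability measure on $TM$ for $\hC(\lambda/T)$, and then use definition~\ref{1} of Theorem~\ref{th1} to relate $\hC$ and $\hC_T$.

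\textbf{Step 1: construct a lifted measure on $TM$.} Let $\Lambda^T\in{\cal P}(\lambda^+,\lambda^-)$ be an optimal plan realizing $\ccC_T(\lambda^+,\lambda^-)=\int\!\!\int C_T(x,y)\,d\Lambda^T$. For $\Lambda^T$-a.e. pair $(x,y)$ the infimum in the definition~(\ref{CTdef}) of $C_T(x,y)$ is attained by a Lagrangian trajectory $\vec{z}_{x,y}\in{\cal K}^T_{x,y}$ (by Tonelli's theorem, using the superlinearity of $l$ on the fibers of $TM$, Section~\ref{notation}-\ref{hamdes}). A standard measurable selection argument---alternatively, Lemma~\ref{lsc} applied to $\Lambda^T$ combined with propagation by the Euler--Lagrange flow $Exp_{(l)}$---yields a Borel selection $(x,y)\mapsto \vec{z}_{x,y}$. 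Define $\nu\in\ooM(TM)$ by
$$\int_{TM}\Phi\,d\nu \;:=\; \frac{1}{T}\int_{M\times M}\!\int_0^T \Phi\bigl(\vec{z}_{x,y}(t),\dot{\vec{z}}_{x,y}(t)\bigr)\,dt\,d\Lambda^T(x,y),\qquad \Phi\in C(TM).$$
Since $\Lambda^T$ is a probability measure, so is $\nu$.

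\textbf{Step 2: verify $\nu\in{\cal M}_{\lambda/T}$.} For any $\phi\in C^1(M)$, the fundamental theorem of calculus along each $\vec{z}_{x,y}$ gives
$$\int_{TM}\langle d\phi,v\rangle\,d\nu
=\frac{1}{T}\int\!\!\int\!\int_0^T\!\frac{d}{dt}\phi(\vec{z}_{x,y}(t))\,dt\,d\Lambda^T
=\frac{1}{T}\int\!\!\int\bigl[\phi(y)-\phi(x)\bigr]\,d\Lambda^T
=\frac{1}{T}\int_M\phi\,d\lambda,$$
where the last step uses that the marginals of $\Lambda^T$ are $\lambda^-,\lambda^+$ (in the sign convention of Section~\ref{notation}-\ref{kappadef}; the signs align with $\lambda=\lambda^+-\lambda^-$). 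This is exactly the defining condition of ${\cal M}_{\lambda/T}$ in~(\ref{closedmslambda}).

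\textbf{Step 3: compute the action of $\nu$ and conclude.} By optimality of $\vec{z}_{x,y}$ in~(\ref{CTdef}),
$$\int_{TM} l(x,v)\,d\nu = \frac{1}{T}\int\!\!\int\!\int_0^T l(\vec{z}_{x,y}(t),\dot{\vec{z}}_{x,y}(t))\,dt\,d\Lambda^T = \frac{1}{T}\int\!\!\int C_T(x,y)\,d\Lambda^T = \frac{1}{T}\,\ccC_T(\lambda^+,\lambda^-).$$
Since $\nu\in{\cal M}_{\lambda/T}$, the definition~(\ref{minlag1}) and item~\ref{1} of Theorem~\ref{th1} yield
$$\hC_T(\lambda)=T\,\hC(\lambda/T)\leq T\!\int_{TM} l\,d\nu = \ccC_T(\lambda^+,\lambda^-),$$
which is the desired inequality.

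The only delicate point is the measurable selection of optimal trajectories in Step 1. The cleanest way to bypass this is to invoke Lemma~\ref{lsc} to lift $\Lambda^T$ to $\widehat{\Lambda}^{(T)}\in\oM(TM)$ with $(I\otimes Exp^{(T)}_{(l)})_\#\widehat{\Lambda}^{(T)}=\Lambda^T$, restrict $\widehat{\Lambda}^{(T)}$ to the Borel set of initial data $(x,v)$ whose Lagrangian trajectory on $[0,T]$ minimizes~(\ref{CTdef}), and propagate by the flow; this requires only that optimal trajectories exist $\Lambda^T$-a.e., which is guaranteed by the superlinearity and $C^2$-regularity of $l$.
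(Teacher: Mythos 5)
Your argument is correct in substance, but it is a genuinely different proof from the one in the paper. You work on the primal side: you lift the optimal Kantorovich plan $\Lambda^T$ to a probability measure $\nu$ on $TM$ by superposing minimizing Lagrangian trajectories, verify the constraint defining ${\cal M}_{\lambda/T}$ in (\ref{closedmslambda}), compute $\int_{TM} l\, d\nu = T^{-1}\ccC_T(\lambda^+,\lambda^-)$, and conclude via definition~\ref{1} of Theorem~\ref{th1}, $\hC_T(\lambda)=T\hC(\lambda/T)$. The paper instead argues entirely on the dual side: it writes the Kantorovich dual of $\ccC_T(\lambda^+,\lambda^-)+ET$, uses the pointwise inequality $C_T(x,y)+ET\geq D_E(x,y)$ together with (\ref{LE}) and (\ref{dualdefD}) to obtain $\ccC_T(\lambda^+,\lambda^-)\geq \cD_E(\lambda)-ET$ for every $E\geq\uE$, and then takes the supremum over $E$ using definition~\ref{hcT1} of Theorem~\ref{th1}. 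The dual route is shorter and avoids selecting optimal trajectories altogether; your primal route is more constructive (it exhibits an explicit competitor for $\hC$, essentially the Eulerian superposition that underlies the rest of the paper) and makes transparent why the inequality can be strict, at the price of the measurable-selection step you yourself flag. Two points to tighten. First, the selection is better justified by a measurable selection theorem applied to the closed-valued, nonempty-valued correspondence assigning to $(x,y)$ the set of initial velocities whose Euler--Lagrange trajectory minimizes (\ref{CTdef}); Lemma~\ref{lsc} alone does not guarantee that the lifted initial data generate \emph{minimizing} trajectories, so your proposed restriction of $\widehat{\Lambda}^{(T)}$ may fail to push forward onto all of $\Lambda^T$. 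Second, with the convention of Section~\ref{notation}-\ref{kappadef} the first marginal of $\Lambda^T$ is $\lambda^+$, so $\int[\phi(y)-\phi(x)]\,d\Lambda^T=\int\phi\, d(\lambda^- -\lambda^+)=-\int_M\phi\, d\lambda$, and your $\nu$ lies in ${\cal M}_{-\lambda/T}$ rather than ${\cal M}_{\lambda/T}$; reversing the orientation of the curves fixes the sign only for reversible Lagrangians, since the reversed curve has action $C_T(y,x)$. The paper is itself inconsistent about this orientation, but your write-up should fix one convention and carry it through.
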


\begin{lemma}\label{lem61}
$\hC_T(\lambda\|\mu)$ is l.s.c in the weak-* topology of $\mm\times\ooM$. Assuming  ${\bf H_1}$ and ${\bf H_2}$,
for any
$\lambda\in {\cal M}_0$, $\mu\in\ooM$  there exists a sequence  $\{\tilde{\mu}_n\}=\{\rho_n(x)dx\}\subset \ooM$,
 $\{\tilde{\lambda}_n\}= \{\rho_n(q_n^+-q_n^{-})dx\}\subset {\cal M}_0$ where $\rho_n\in C^\infty(M)$ are positive everywhere, $q_n^\pm\in C^\infty(M)$ non-negatives such that $\tilde{\lambda}_n \rightharpoonup \lambda$, $\tilde{\mu}_n \rightharpoonup \mu$ and
\be\label{lim=} \lim_{n\rightarrow\infty} \hC_T(\tilde{\lambda}_n\|\tilde{\mu}_n) = \hC_T(\lambda\|\mu) \ . \ee
\end{lemma}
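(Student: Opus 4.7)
The plan is to establish separately (i) weak-* lower semi-continuity of $\hC_T(\cdot\|\cdot)$ on $\mm\times\ooM$, and (ii) the existence of a smooth recovery sequence attaining equality in the limit. For (i) I would use the representation
$$\hC_T(\lambda\|\mu) = \sup_{\phi \in C^1(M)} \left[\int_M \phi\, d\lambda - T\int_M h(x,d\phi(x))\, d\mu(x)\right]$$
from (\ref{CTmudef}). For each fixed $\phi \in C^1(M)$ the map $(\lambda,\mu) \mapsto \int \phi\, d\lambda - T\int h(\cdot, d\phi)\, d\mu$ is affine and weak-* continuous, since $\phi$ and $h(\cdot,d\phi)$ belong to $C(M)$. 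Hence $\hC_T(\cdot\|\cdot)$ is a supremum of weakly continuous functionals and therefore weakly l.s.c.

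For (ii) I would construct via mollification. Let $\delta_n := \delta_{1/n}$ be the mollifier furnished by ${\bf H}_1$. Set $\rho_n := c_n(\delta_n * \mu + 1/n)$, with $c_n \to 1$ chosen so $\int \rho_n\,dx = 1$; then $\rho_n \in C^\infty(M)$ is strictly positive and $\tilde\mu_n := \rho_n\,dx \rightharpoonup \mu$. Let $f_n := (c_n \delta_n * \lambda)/\rho_n \in C^\infty(M)$ and write $f_n = q_n^+ - q_n^-$ by $q_n^+ := M_n + f_n$, $q_n^- := M_n$ with $M_n > \|f_n\|_\infty$; the resulting $q_n^\pm \geq 0$ are smooth. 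Consequently $\tilde\lambda_n := \rho_n(q_n^+ - q_n^-)\,dx = c_n\delta_n * \lambda \rightharpoonup \lambda$ by standard properties of mollifiers.

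To prove $\limsup_{n\to\infty} \hC_T(\tilde\lambda_n\|\tilde\mu_n) \leq \hC_T(\lambda\|\mu)$, I would pick a $(1/n)$-optimizer $\phi_n \in C^1(M)$ of $\hC_T(\tilde\lambda_n\|\tilde\mu_n)$, normalized by $\int \phi_n\, d\tilde\mu_n = 0$, and set $\psi_n := \delta_n * \phi_n \in C^\infty(M)$. By ${\bf H}_1$, $d\psi_n = \delta_n * d\phi_n$, and by the symmetry of $\delta_n$ (typical on the homogeneous spaces where ${\bf H}_1$ applies), $\int \psi_n\, d\lambda = \int \phi_n\, d\tilde\lambda_n + O(1/n)$. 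The key Hamiltonian estimate comes from applying Jensen's inequality to the convex map $\xi \mapsto h(x,\xi)$ after parallel-translating covectors $d\phi_n(y) \in T^*_y M$ into $T^*_x M$, using ${\bf H}_2$ to control the resulting fiber-change error:
$$h(x, d\psi_n(x)) \leq (1+\eta_n) \int_M \delta_n(x,y)\, h(y, d\phi_n(y))\, dy + C\eta_n,\qquad \eta_n \to 0.$$
Integrating against $\mu$ and using symmetry of $\delta_n$ produces $\int h(\cdot,d\psi_n)\,d\mu \leq (1+\eta_n)\int h(\cdot,d\phi_n)\,d\tilde\mu_n + o(1)$, whence
$$\hC_T(\lambda\|\mu) \geq \int \psi_n\, d\lambda - T\!\int h(\cdot,d\psi_n)\,d\mu \geq \hC_T(\tilde\lambda_n\|\tilde\mu_n) - \tfrac{1}{n} - T\eta_n B_n - o(1),$$
where $B_n := \int h(\cdot, d\phi_n)\,d\tilde\mu_n$. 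Combined with (i), passing to $n \to \infty$ delivers (\ref{lim=}).

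The main obstacle is the uniform boundedness of $B_n$, which is what forces the error $T\eta_n B_n$ to vanish. In the only nontrivial case $\hC_T(\lambda\|\mu) < \infty$, this is extracted from the superlinearity of $h$ on the fibers (Section~\ref{notation}-\ref{hamdes}), the normalization $\int \phi_n\, d\tilde\mu_n = 0$, and Poincar\'e-type controls on $M$; the case $\hC_T(\lambda\|\mu) = \infty$ follows trivially from (i). A secondary technicality is that ${\bf H}_2$ is stated pointwise in $(x,\xi)$, so uniformity of the modulus $\delta(\eps)$ must be obtained from compactness of $M$ together with \emph{a priori} bounds on $d\phi_n$.
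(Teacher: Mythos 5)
Your part (i) and your construction of $(\tilde{\mu}_n,\tilde{\lambda}_n)$ coincide with the paper's (the paper takes $\mu_n=\delta_{\eps_n}*\mu$ and $\lambda_n=\delta_{\eps_n}*\lambda$ without your $+1/n$ regularization; that refinement is harmless and actually needed for the strict positivity of $\rho_n$ claimed in the statement). The core of part (ii) is also the paper's: take a (near-)maximizer of the mollified problem, mollify it, and use Jensen plus ${\bf H_2}$ to test it in $\hC_T(\lambda\|\mu)$. The divergence --- and the gap --- is in how the multiplicative error produced by ${\bf H_2}$ is booked. You keep it as an additive term $T\eta_n B_n$ with $B_n=\int h(\cdot,d\phi_n)\,d\tilde{\mu}_n$ and must then prove $\sup_n B_n<\infty$. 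Nothing you cite establishes this: superlinearity of $h$ gives a \emph{lower} bound $B_n\geq -C+\int\hat{h}(d\phi_n)\,d\tilde{\mu}_n$ but no upper bound; the virial-type identity $\int\phi_n\,d\tilde{\lambda}_n=T\int\langle h_\xi(x,d\phi_n),d\phi_n\rangle\,d\tilde{\mu}_n$, which would convert boundedness of $\hC_T(\tilde{\lambda}_n\|\tilde{\mu}_n)$ into boundedness of $B_n$, holds only for exact maximizers and needs a doubling-type bound $\langle h_\xi,\xi\rangle\leq C(1+h)$ that the standing hypotheses (strict convexity plus superlinearity) do not supply; and a Poincar\'e inequality weighted by $\tilde{\mu}_n$ degenerates as $n\to\infty$, since $\rho_n$ is only bounded below by about $1/n$ while $\|q_n^\pm\|_\infty$ may blow up. So the step ``$T\eta_nB_n\to 0$'' is a genuinely unproved claim on which your entire limsup inequality rests.

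The paper's proof is arranged precisely to avoid this. It inverts the Jensen/${\bf H_2}$ estimate into $\int_M h(x,d\psi_n)\,d\mu_n\geq (1-o(1))\int_M h(x,d(\delta_\eps*\psi_n))\,d\mu-O(\eps)$ and then factors the $(1-o(1))$ out of the whole bracket, obtaining
$$\hC_T(\lambda_n\|\mu_n)\leq (1-o(1))\left[\int_M\delta_\eps*\psi_n\,\frac{d\lambda}{1-o(1)}-T\int_M h(x,d(\delta_\eps*\psi_n))\,d\mu\right]+O(\eps)\leq (1-o(1))\,\hC_T\!\left(\frac{\lambda}{1-o(1)}\Big\|\mu\right)+O(\eps) \ ,$$
so the multiplicative error is absorbed into a rescaling of $\lambda$ and no bound on $B_n$ is ever required; all that remains is to let the scaling parameter tend to $1$. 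You should either adopt this rearrangement or add an explicit growth hypothesis on $h$ under which your $B_n$ bound can actually be carried out.
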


\begin{lemma}\label{lem62}
For any $\mu\in\ooM$, $\lambda=\lambda^+-\lambda^-\in\mm$
$$ \liminf_{\eps\rightarrow  0} \eps^{-1}\ccC_{\eps T}(\mu+\eps\lambda^+, \mu+\eps\lambda^-) \geq \hC_T(\lambda\|\mu)  \ . $$
\end{lemma}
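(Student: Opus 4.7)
The plan is to combine Fenchel--Young duality along action-minimizing curves with the variational formula (\ref{CTmudef}) for $\hC_T(\lambda\|\mu)$. Since $\hC_T(\lambda\|\mu) = \sup_{\phi\in C^1(M)}\bigl\{\int_M\phi\,d\lambda - T\int_M h(x,d\phi)\,d\mu\bigr\}$, it suffices to prove, for every fixed $\phi\in C^1(M)$, a lower bound of the form
$$
\liminf_{\eps\to 0}\eps^{-1}\ccC_{\eps T}(\mu+\eps\lambda^+,\mu+\eps\lambda^-)\geq -\int_M\phi\,d\lambda - T\int_M h(x,d\phi)\,d\mu,
$$
and then to apply the substitution $\phi\mapsto -\phi$ (which flips the sign of $\int\phi\,d\lambda$ to match the source/target order used in $\ccC_{\eps T}$) and take the supremum.

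The Fenchel--Young inequality $\langle\xi,v\rangle\leq l(x,v)+h(x,\xi)$, applied to $(d\phi(z),\dot z)$ and integrated over $[0,\eps T]$ along any minimizer $z^*_{xy}\in {\cal K}^{\eps T}_{x,y}$ of $C_{\eps T}(x,y)$, yields
$$
\phi(y)-\phi(x)\leq C_{\eps T}(x,y)+\int_0^{\eps T} h\bigl(z^*_{xy}(s),d\phi(z^*_{xy}(s))\bigr)\,ds.
$$
Let $\Lambda_\eps$ be an optimal plan for $\ccC_{\eps T}(\mu+\eps\lambda^+,\mu+\eps\lambda^-)$; combining Lemma~\ref{lsc} with a measurable selection of the curves $z^*_{xy}$ lifts $\Lambda_\eps$ to $\widehat\Lambda_\eps\in\oM(TM)$ whose flow under $Exp^{(s)}_{(l)}$ traces out the chosen minimizers. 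Setting $\mu^\eps_s:=\Pi_\#\bigl(Exp^{(s)}_{(l)}\bigr)_\#\widehat\Lambda_\eps$ and $\overline\mu_\eps:=(\eps T)^{-1}\int_0^{\eps T}\mu^\eps_s\,ds$, and integrating the pointwise inequality against $\Lambda_\eps$, one obtains
$$
-\eps\int_M\phi\,d\lambda\leq \ccC_{\eps T}(\mu+\eps\lambda^+,\mu+\eps\lambda^-)+\eps T\int_M h(x,d\phi)\,d\overline\mu_\eps,
$$
equivalently, after dividing by $\eps$, the key estimate
$$
\eps^{-1}\ccC_{\eps T}(\mu+\eps\lambda^+,\mu+\eps\lambda^-)\geq -\int_M\phi\,d\lambda - T\int_M h(x,d\phi)\,d\overline\mu_\eps.
$$

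The main technical step, and the expected obstacle, is to show the weak-$*$ convergence $\overline\mu_\eps\rightharpoonup\mu$ in ${\cal M}^+(M)$; combined with continuity of $h(\cdot,d\phi)$ (ensured by ${\bf H_2}$) this allows us to pass to the limit in the right-hand side. One may restrict to a subsequence along which $\eps^{-1}\ccC_{\eps T}$ stays bounded (otherwise the claimed $\liminf$ is $+\infty$ and the lemma is trivial), so that $\ccC_{\eps T}=O(\eps)$. Superlinearity of $l$ yields a Jensen-type lower bound $C_{\eps T}(x,y)\geq \eps T\,\hat l\bigl(D(x,y)/(\eps T)\bigr)-C\eps T$, and Markov's inequality then implies that the $\Lambda_\eps$-mass of pairs $(x,y)$ with $D(x,y)$ bounded away from $0$ tends to $0$ as $\eps\to 0$. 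Consequently the selected minimizing curves are uniformly short for essentially all of the $\Lambda_\eps$-mass; since the boundary marginals $\mu^\eps_0=\mu+\eps\lambda^+$ and $\mu^\eps_{\eps T}=\mu+\eps\lambda^-$ both weak-$*$ converge to $\mu$, the short-displacement estimate forces the intermediate $\mu^\eps_s$ to track them in $C^*(M)$, and the time average $\overline\mu_\eps$ inherits the weak limit $\mu$. If a displacement estimate is awkward for singular data, one may first pass through the smooth approximants of Lemma~\ref{lem61} and use the lower semicontinuity part to descend. Taking the supremum over $\phi\in C^1(M)$ in the key estimate (with $\phi\mapsto -\phi$) then produces $\hC_T(\lambda\|\mu)$ on the right, completing the proof.
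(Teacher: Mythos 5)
Your argument is correct in substance but follows a genuinely different route from the paper. The paper works entirely on the dual side: it writes $\ccC_{\eps T}$ via Kantorovich duality as $\sup_{\phi}\int\phi\,d\mu_2-LO(\phi)_{(\eps T,\cdot)}\,d\mu_1$, where $LO$ is the Lax--Oleinik semigroup, uses the fact that for $\phi\in C^1(M)$ one has $\eps^{-1}\bigl[LO(\phi)_{(\eps T,\cdot)}-\phi\bigr]\to T h(\cdot,d\phi)$, and concludes by the trivial interchange $\liminf_\eps\sup_\phi\geq\sup_\phi\lim_\eps$. That argument is soft: it never touches the optimal plans and needs no compactness or displacement control. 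Your primal argument --- Fenchel--Young along a measurable selection of minimizing curves, integrated against the optimal plan --- is the Lagrangian counterpart of the same duality, but it forces you to prove the extra statement $\overline\mu_\eps\rightharpoonup\mu$ for the time-averaged interpolant. You correctly identify this as the crux, and your sketch does close it: on a subsequence with $\eps^{-1}\ccC_{\eps T}$ bounded, superlinearity of $l$ gives $\int \mathrm{length}(z^*_{xy})\,d\Lambda_\eps=O(\eps)$, so each $\mu^\eps_s$ is within $O(\eps)$ of the endpoint marginals in $W_1$ and the average inherits the limit $\mu$; note also that continuity of $x\mapsto h(x,d\phi(x))$ already follows from $h\in C^2$ and $\phi\in C^1$, so ${\bf H_2}$ is not needed here, consistent with the paper proving this lemma without the hypotheses ${\bf H_1}$, ${\bf H_2}$. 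The price of your route is this added technical layer; what it buys is an explicit transport-measure picture (the interpolants $\overline\mu_\eps$) that the dual proof hides.

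One caveat you should resolve explicitly: the substitution $\phi\mapsto-\phi$ turns your key estimate into $\int\phi\,d\lambda-T\int h(x,-d\phi)\,d\overline\mu_\eps$, and $\sup_\phi\bigl\{\int\phi\,d\lambda-T\int h(x,-d\phi)\,d\mu\bigr\}$ coincides with $\hC_T(\lambda\|\mu)$ only when $h(x,\cdot)$ is even (as in the Wasserstein examples). For a general Hamiltonian the sign you obtain is dictated by which marginal of $\ccC_{\eps T}$ plays the role of source; the paper itself is inconsistent on this point (compare the argument order in Theorem~\ref{th3} with that in the statement of this lemma), so the honest fix is to state the lemma with the order $\ccC_{\eps T}(\mu+\eps\lambda^-,\mu+\eps\lambda^+)$, in which case your inequality delivers exactly $\hC_T(\lambda\|\mu)$ with no substitution needed.
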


\begin{lemma}\label{lem63}
Assume $\mu=\rho(x)dx$ and $\lambda=\rho(q^+-q^-)dx$ where $\rho,q^\pm$ are $C^\infty$ functions, $\rho$ positive everywhere on $M$. Then
$$ \limsup_{\eps\rightarrow 0} \eps^{-1}\ccC_{\eps T}\left(\mu+\eps\lambda^+, \mu+\eps\lambda^-\right) \leq \hC_T(\lambda\|\mu) \ . $$
\end{lemma}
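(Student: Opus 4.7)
My plan is to build a Benamou--Brenier--type dynamic transport from $\mu+\eps\lambda^+$ to $\mu+\eps\lambda^-$, driven by the velocity field attached to a near-maximizer of the dual functional defining $\hC_T(\lambda\|\mu)$, and then to evaluate the resulting action via Young's equality.

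Fix $\eta>0$. Using the variational characterization (\ref{CTmudef}) together with the smoothing hypothesis ${\bf H_1}$, pick $\phi\in C^\infty(M)$ with
$$-T\int_M h(x,d\phi)\,d\mu+\int_M\phi\,d\lambda\ \geq\ \hC_T(\lambda\|\mu)-\eta,$$
and let $v(x):=h_\xi(x,d\phi(x))$ be the associated Legendre velocity field, which is smooth and bounded on $M$. A first-variation calculation shows that $\phi$ is, up to an $O(\eta)$ error, a Lagrange multiplier for the divergence constraint $T\,\nabla\cdot(\mu v)=-\lambda$ in the distributional sense. Now introduce the linear interpolation
$$\mu_s^\eps\ :=\ \mu+\eps\lambda^++\tfrac{s}{\eps T}\eps(\lambda^--\lambda^+),\qquad s\in[0,\eps T],$$
which, thanks to the smoothness of $\rho,q^\pm$ and the positivity of $\rho$, is a smooth positive measure with the prescribed endpoints for $\eps$ small. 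Choose next a smooth time-dependent velocity field $u_s^\eps$ solving the continuity equation $\partial_s\mu_s^\eps+\nabla\cdot(\mu_s^\eps u_s^\eps)=0$. By inverting the divergence equation on $M$ against the relation above, $u_s^\eps$ can be arranged to converge to $v$ (up to an overall sign dictated by the direction of transport) in $L^\infty(d\mu)$ uniformly in $s$ as $\eps\to 0$.

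Applying the Benamou--Brenier dynamic upper bound on the Kantorovich cost,
$$\cC_{\eps T}(\mu+\eps\lambda^+,\mu+\eps\lambda^-)\ \leq\ \int_0^{\eps T}\!\!\int_M l(x,u_s^\eps(x))\,d\mu_s^\eps(x)\,ds,$$
and to leading order in $\eps$ this equals $\eps T\int_M l(x,v)\,d\mu$. Using Young's equality $l(x,v)+h(x,d\phi)=\langle d\phi,v\rangle$ at $v=h_\xi(x,d\phi)$, followed by integration by parts against the divergence relation for $\mu v$, yields
$$\eps T\int_M l(x,v)\,d\mu\ =\ \eps\int_M\phi\,d\lambda-\eps T\int_M h(x,d\phi)\,d\mu+O(\eps\eta).$$
Dividing by $\eps$, combining with the near-optimality of $\phi$, and sending first $\eps\to 0$ and then $\eta\to 0$ yields the claimed upper bound on $\limsup_{\eps\to 0}\eps^{-1}\cC_{\eps T}(\mu+\eps\lambda^+,\mu+\eps\lambda^-)$.

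The main obstacle is the quantitative control of error terms in the passage from the full path action to the leading term $\eps T\int l(x,v)\,d\mu$. The smoothness and strict positivity of $\rho$ are essential here: they permit solving the divergence equation $\nabla\cdot(\rho_s^\eps u_s^\eps)=\partial_s\rho_s^\eps$ with smooth, uniformly bounded solutions depending continuously on $\eps$, so that $l$ is evaluated on a precompact set of velocities. Hypothesis ${\bf H_2}$ is then invoked to bound the oscillations of $h$ and $l$ along the short trajectory, guaranteeing that the replacement of $l(x,u_s^\eps)\,d\mu_s^\eps$ by $l(x,v)\,d\mu$ produces only an $o(\eps)$ remainder after time integration over $[0,\eps T]$.
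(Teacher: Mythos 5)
Your route is genuinely different from the paper's: you build a dynamic (Benamou--Brenier type) competitor from the dual potential and bound $\ccC_{\eps T}$ by its action, whereas the paper analyzes the \emph{exact} optimal map $S_{\eps T}$ through its Monge--Amp\`ere equation, derives the limiting quasilinear elliptic equation $T\nabla\cdot(\rho\, h_\xi(x,d\psi_0))=\rho(q^--q^+)$ for the Kantorovich potential, and proves $\psi_\eps\to\psi_0$ in $C^1$ by the implicit function theorem. Your strategy is in principle lighter (one only needs \emph{an} admissible transport, not the optimal one), but as written it has a genuine gap at its central step.

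The gap is the claim that a near-maximizer $\phi$ of $\int\phi\,d\lambda-T\int h(x,d\phi)\,d\mu$ is ``up to an $O(\eta)$ error, a Lagrange multiplier for the divergence constraint.'' Near-optimality in value of a concave functional does not give approximate stationarity in any norm you can use: without a uniform modulus of concavity you get no control of $T\nabla\cdot(\mu\,h_\xi(x,d\phi))+\lambda$ in a space strong enough to (i) produce a velocity field $u_s^\eps$ that both satisfies the continuity equation \emph{exactly} (which is forced, since the endpoints $\mu+\eps\lambda^\pm$ are prescribed) and converges to $\pm h_\xi(x,d\phi)$ in $L^\infty(d\mu)$, and (ii) justify the integration by parts $T\int\langle d\phi,v\rangle\,d\mu=\int\phi\,d\lambda+O(\eta)$ in your Young-equality computation. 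Both steps use the Euler--Lagrange relation itself, not the value of the functional. To close the argument you need the \emph{exact} smooth maximizer $\psi_0$, i.e.\ a $C^\infty$ solution of the quasilinear elliptic equation above (this is precisely what the paper supplies via elliptic regularity, and where the hypotheses $\rho>0$, $\rho,q^\pm\in C^\infty$ enter); alternatively you could stay on the primal side, taking a smooth $u$ with $T\nabla\cdot(\mu u)=\lambda$ nearly minimizing $T\int l(x,u)\,d\mu$, and invoke Fenchel--Rockafellar duality to identify that infimum with the dual supremum --- but then the absence of a duality gap has to be proved, which is again essentially the solvability of the elliptic equation. A secondary point to pin down: transporting \emph{from} $\mu+\eps\lambda^+$ \emph{to} $\mu+\eps\lambda^-$ forces $u\approx-h_\xi(x,d\phi)$, so your leading term is $\eps T\int l(x,-v)\,d\mu$ while Young's equality applies to $l(x,+v)$; for Hamiltonians not even in $\xi$ these differ, so the orientation of the potential (equivalently, whether one maximizes with $\lambda$ or $-\lambda$) must be fixed consistently with the direction of the optimal map.
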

\begin{lemma}\label{lastlema}
For $T>0$,
$$ \hC_T(\lambda)\geq \limsup_{\eps\rightarrow 0}\eps^{-1} \inf_{\mu\in\ooM}\ccC_{\eps T}(\mu+\eps\lambda^+, \mu+\eps\lambda^-) \ . $$
\end{lemma}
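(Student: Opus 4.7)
The plan is to exhibit, for each $\eps$, a competitor $\mu_\eps\in\ooM$ in the infimum whose rescaled transport cost is at most $\hC_T(\lambda)+o(1)$, by combining Lemmas~\ref{lem61} and~\ref{lem63}. First, by Theorem~\ref{th5} pick $\mu_\ast\in\ooM$ realizing $\hC_T(\lambda\|\mu_\ast)=\hC_T(\lambda)$; existence follows from the weak compactness of $\ooM$ together with the lower semicontinuity of $\hC_T(\lambda\|\cdot)$ supplied by Lemma~\ref{lem61}. Apply Lemma~\ref{lem61} to the pair $(\lambda,\mu_\ast)$ to obtain smooth $\tilde\mu_n=\rho_n\,dx$ and $\tilde\lambda_n=\rho_n(q_n^+-q_n^-)\,dx$ with $\tilde\mu_n\rightharpoonup\mu_\ast$, $\tilde\lambda_n\rightharpoonup\lambda$ and $\hC_T(\tilde\lambda_n\|\tilde\mu_n)\to\hC_T(\lambda)$. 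Using hypothesis ${\bf H_1}$ on $\lambda^+$ and $\lambda^-$ separately, arrange in addition that the positive parts $\tilde\lambda_n^\pm:=\rho_n q_n^\pm\,dx$ converge to $\lambda^\pm$ in the Wasserstein distance $W_1$ (in particular with the same total mass).

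For each fixed $n$, Lemma~\ref{lem63} applied to $(\tilde\lambda_n,\tilde\mu_n)$ yields
$$\limsup_{\eps\to 0}\eps^{-1}\ccC_{\eps T}\bigl(\tilde\mu_n+\eps\tilde\lambda_n^+,\tilde\mu_n+\eps\tilde\lambda_n^-\bigr)\leq\hC_T(\tilde\lambda_n\|\tilde\mu_n).$$
A diagonal extraction then produces a sequence $n(\eps)\to\infty$ as $\eps\to 0$ with
$$\eps^{-1}\ccC_{\eps T}\bigl(\tilde\mu_{n(\eps)}+\eps\tilde\lambda_{n(\eps)}^+,\tilde\mu_{n(\eps)}+\eps\tilde\lambda_{n(\eps)}^-\bigr)\leq\hC_T(\tilde\lambda_{n(\eps)}\|\tilde\mu_{n(\eps)})+o(1)\longrightarrow\hC_T(\lambda).$$
Take $\mu_\eps:=\tilde\mu_{n(\eps)}\in\ooM$ as the competitor in the infimum; it remains to compare $\ccC_{\eps T}(\mu_\eps+\eps\lambda^+,\mu_\eps+\eps\lambda^-)$ with $\ccC_{\eps T}(\mu_\eps+\eps\tilde\lambda_{n(\eps)}^+,\mu_\eps+\eps\tilde\lambda_{n(\eps)}^-)$.

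The main obstacle is closing this discrepancy. By Kantorovich duality, if $(\phi_1^\eps,\phi_2^\eps)$ is an optimal dual pair for the original (unsmoothed) problem, then using the same pair as a feasible competitor in the smoothed dual yields
$$\ccC_{\eps T}(\mu_\eps+\eps\lambda^+,\mu_\eps+\eps\lambda^-)-\ccC_{\eps T}(\mu_\eps+\eps\tilde\lambda_{n(\eps)}^+,\mu_\eps+\eps\tilde\lambda_{n(\eps)}^-)\leq\eps\,\bigl(\|\phi_1^\eps\|_{Lip}\,W_1(\lambda^+,\tilde\lambda_{n(\eps)}^+)+\|\phi_2^\eps\|_{Lip}\,W_1(\lambda^-,\tilde\lambda_{n(\eps)}^-)\bigr).$$
The crux is a Lipschitz bound on $\phi_i^\eps$. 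The worst-case estimate $\|\phi_i^\eps\|_{Lip}\leq\sup_y\|C_{\eps T}(\cdot,y)\|_{Lip}$ is of order $\eps^{-(p-1)}$ in the homogeneous case and is not sharp; for the small balanced perturbation $\eps\lambda^\pm$ of the smooth reference $\mu_\eps$ the infinitesimal analysis underlying Lemma~\ref{lem63} (the optimal potential solves a divergence-form equation of the form $-T\nabla\cdot\bigl(\mu_\eps\,h_\xi(x,d\phi)\bigr)=\lambda$) predicts $\|\phi_i^\eps\|_{Lip}=O(\eps)$. In either case, choosing $n(\eps)$ slowly enough that $W_1(\lambda^\pm,\tilde\lambda_{n(\eps)}^\pm)$ decays sufficiently fast makes the right-hand side $o(\eps)$, so dividing by $\eps$ and letting $\eps\to 0$ yields
$$\limsup_{\eps\to 0}\eps^{-1}\inf_{\mu\in\ooM}\ccC_{\eps T}(\mu+\eps\lambda^+,\mu+\eps\lambda^-)\leq\hC_T(\lambda),$$
as required. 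The principal technical hurdle is the sharp regularity of the optimal Kantorovich potential under small perturbations of a smooth reference measure; this is where the smoothness and positivity of $\rho_n$ provided by Lemma~\ref{lem61} enter essentially.
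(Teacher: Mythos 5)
Your strategy---smooth the data via Lemma~\ref{lem61}, apply Lemma~\ref{lem63} for each fixed smoothing, diagonalize, and then transfer the bound back to the original $\lambda^\pm$---is not the paper's route, and the step you yourself flag as ``the crux'' is a genuine gap rather than a technicality. Your duality comparison needs a Lipschitz bound on the optimal Kantorovich potentials of the \emph{unsmoothed} problem $\ccC_{\eps T}(\mu_\eps+\eps\lambda^+,\mu_\eps+\eps\lambda^-)$, where $\lambda^\pm$ may be singular (atomic, say). The heuristic that the divergence-form equation of Lemma~\ref{lem63} ``predicts $\|\phi_i^\eps\|_{Lip}=O(\eps)$'' is taken from the smoothed setting, where all mass moves distances $O(\eps)$; it does not apply here. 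For atomic $\lambda^\pm$ the limiting potential solves a $q$-Laplace-type equation with Dirac sources located exactly where $\lambda^\pm-\tilde\lambda^\pm_{n(\eps)}$ is concentrated, so its gradient blows up precisely where you need the Lipschitz control, and the crude $c$-concavity bound gives only $O(\eps^{-(p-1)})$. Your error term after dividing by $\eps$ is then $O\bigl(\eps^{-(p-1)}W_1(\lambda^\pm,\tilde\lambda^\pm_{n(\eps)})\bigr)$, which forces $n(\eps)\to\infty$ fast; but the diagonal extraction from Lemma~\ref{lem63} forces $n(\eps)\to\infty$ slowly, because the $O((\eps T)^2)$ remainders in the Monge--Amp\`{e}re expansion involve derivatives of $\rho_n,q_n^\pm$ that blow up with $n$. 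Nothing guarantees these two requirements are compatible, and no argument is offered. A secondary point: your proof invokes ${\bf H_1}$, ${\bf H_2}$ and the attainment in Theorem~\ref{th5}, none of which is assumed in the statement of Lemma~\ref{lastlema}, and the paper's proof of this lemma uses neither.

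In effect you are reproving the $\Gamma$-$\limsup$ part of Theorem~\ref{th6}, where the recovery sequence is allowed to move $\lambda$ together with $\mu$; the entire content of Lemma~\ref{lastlema} is that the bound holds for the \emph{fixed} $\lambda$, and that is exactly what your discrepancy step fails to deliver. The paper proceeds by a direct construction that never smooths $\lambda$: it introduces the discretized metric $D_E^{\eps}(x,y)=\inf_{n}\left[C_{\eps nT}(x,y)+\eps nET\right]$, takes an optimal plan for $\cD_E^\eps(\lambda)$, splits each transport ray into $n$ Lagrangian time-steps of length $\eps T$ (Lemmas~\ref{lsc}, \ref{partof}, \ref{partof2}), and takes as competitor $\mu^{\eps,E}$ the measure $\eps$ times the superposition of the intermediate marginals $\lambda_n^j$, with $E$ tuned so that the averaged flight time makes $\mu^{\eps,E}$ a probability measure. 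The chain of one-step costs then telescopes into $\eps^{-1}\ccC_{\eps T}(\mu^{\eps,E}+\eps\lambda^+,\mu^{\eps,E}+\eps\lambda^-)\leq \cD_E^\eps(\lambda)-\langle T\rangle^\eps E$, and Theorem~\ref{th1}-\ref{hcT1} concludes. To salvage your approach you would need a quantitative Lemma~\ref{lem63} with explicit dependence on the smoothing scale together with a sharp potential estimate for singular marginals; the interpolation construction sidesteps both.
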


\begin{proof} {\it of Lemma~\ref{lem60}}: \ We use the duality representation of the Monge-Kantorovich functional
\cite{vi} to obtain (recall $\lambda^\pm\in \ooM$)
$$ \ccC_T(\lambda^+,\lambda^-) +ET = \sup_{\psi,\phi} \left\{ \int_M \psi d\lambda^--\phi d\lambda^+ \ \ \ , \ \
\phi(y)-\psi(x)\leq C_T(x,y) +ET \right\}$$
By (\ref{dedef1})
$C_T(x,y)+ET\geq D_E(x,y)$ for any $x,y\in M$ so, by (\ref{LE}, \ref{dualdefD})
\begin{multline}\sup_{\psi,\phi} \left\{ \int_M \psi d\lambda^--\phi d\lambda^+ \ , \
\phi(y)-\psi(x)\leq C_T(x,y) +ET \right\}\geq \sup_{\phi} \left\{ \int_M \phi d\lambda \ , \
\phi(y)-\phi(x)\leq D_E(x,y) \right\} \\ = {\cal D}_E(\lambda)\end{multline}
so $$ \ccC_T(\lambda^+, \lambda^-) \geq {\cal D}_E(\lambda)-ET$$
for {\it any} $E\geq \uE$.  By Theorem~\ref{th1}-(\ref{hcT1})
$$ \ccC_T(\lambda^+, \lambda^-) \geq \sup_{E\geq \uE}\cD_E(\lambda)-ET= \hC_T(\lambda) \ . $$
\end{proof}
\begin{proof} {\it of Lemma~\ref{lem61}}: \
From (\ref{Demudef}, \ref{CTmudef}) we obtain
$$ \hC_T(\lambda\|\mu)= \sup_{\phi\in C^1(M)} \int_M \phi d\lambda-Th(x, d\phi) d\mu \ . $$
In particular $\hC_T$ is l.s.c (and convex) on $\mm\times\ooM$.
\par
Let $\eps_n\rightarrow 0$ and $\lambda_n:= \lambda_{\eps_n}:= \delta_{\eps_n} * \lambda\in{\cal M}_0$ defined by
\be\label{ld} \int_M\psi d\lambda_n:= \lambda( \delta_{\eps_n} *\psi) \ \ \ \forall \psi\in C^0(M) \ . \ee
By ${\bf H_1}$,  $\lambda_n \rightharpoonup \lambda$ while $\lambda_n$  are smooth.
First, we observe that $\lim_{n\rightarrow \infty} \lambda_n \rightharpoonup \lambda$. Indeed, for any
$\psi\in C^1(M)$:
$$ \lim_{n\rightarrow \infty} \int_M \psi d\lambda_n =\lim_{n\rightarrow \infty} \lambda\left( \delta_{\eps_n} *\psi\right)
= \lambda(\psi) \ . $$
Next, by Jensen's Theorem and ${\bf H_2}$
  \begin{multline}\label{2d}\int_Mh(x, d\delta_\eps*\phi)d\mu= \int_Mh(x, \delta_\eps*d\phi)d\mu\leq \int_{M\times M}h(x, d\phi(y))\delta_\eps(x,y)d\mu(x)dy \\ \equiv \int_M h(x, d\phi)d\delta_\eps* \mu+ \int_{M\times M}\left[ h(x, d\phi(y))-h(y, d\phi(y)\right]\delta_\eps(x,y)d\mu(x)dy \end{multline}
From  section~\ref{notation}-(\ref{hamdes}) and using $\delta_\eps(x,y)=o(1)$ for $D(x,y)>\delta$,
$$ \int_{M\times M}\left[ h(x, d\phi(y))-h(y, d\phi(y)\right]\delta_\eps(x,y)d\mu(x)dy\leq O(\eps) + o(1) \int_M h(x, d\phi) d\delta_\eps*\mu  \ . $$
Next, define $\mu_n=\delta_{\eps_n}*\mu$. Let $\psi_n$ be the maximizer of $\hC(\lambda_n\|\mu_n)$, that is
$$\hC_T(\lambda_n\|\mu_n)= \int_M\psi_n d\lambda_n-Th(x,d\psi_n) d\mu_n$$
By (\ref{ld}, \ref{2d})
 \begin{multline}\hC_T(\lambda_n\|\mu_n) \leq \int_M \delta_\eps*\psi_n d\lambda- (1-o(1)) \int_M Th(x, d\delta_\eps* \psi_n) d\mu +O(\eps_n)= \\ (1-o(1))\left[ \int_M \delta_\eps*\psi_n \frac{d\lambda}{1-o(1)}-\int_M Th(x, d\delta_\eps* \psi_n) d\mu
\right]+\eps_n \leq (1-o(1))\hC\left( \frac{\lambda}{1-o(1)}\|\mu\right)+\eps_n\end{multline}
We obtained
$$ \limsup_{n\rightarrow\infty}\hC_T(\lambda_n\|\mu_n) \leq \hC_T(\lambda\|\mu)$$
which, together with the l.s.c of $\hC_T$, implies the result.
\end{proof}
\begin{proof} {\it of Lemma~\ref{lem62}}: \
Recall that  the Lax-Oleinik Semigroup acting on $\phi\in C^0(M)$
$$ \psi(x,t)=LO(\phi)_{(t,x)} := \sup_{y\in M}\left[\phi(y)-C_t(x,y)\right]$$
is a viscosity solution of the Hamilton-Jacobi equation $\partial_t\psi-h(x, d\psi)=0$ subjected to $\psi_0=\phi(x)$.
If $\phi\in C^1(M)$ then $\psi$ is a {\it classical solution} on some neighborhood of $t=0$, so
$$  \lim_{T\rightarrow 0}  LO(\phi)_{(T,\cdot)}=\phi  \ \ ; \ \  \lim_{T\rightarrow 0} T^{-1} \left[ LO(\phi)_{(T,x)} -\phi(x)\right] = h(x, d\phi) \ . $$
Then for any $\mu_1, \mu_2\in \ooM$
 \begin{multline}\ccC_T(\mu_1, \mu_2)= \sup_{\phi, \psi\in C^1(M)}\left\{\int_M\phi d\mu_2-\psi d\mu_1 \ \ \ ; \ \ \phi(x)-\psi(y)\leq C_T(x,y) \ \ \forall x,y\in M \right\}= \\ \sup_{\phi\in C^1(M)} \int_M\phi d\mu_2 - LO(\phi)_{(T,x)} d\mu_1
 \end{multline}
 Hence
  \begin{multline}\liminf_{\eps\rightarrow  0} \eps^{-1}\ccC_{\eps T}(\mu+\eps\lambda^+, \mu+\eps\lambda^-) = \\
  \liminf_{\eps\rightarrow  0}  \sup_{\phi\in C^1(M)}\int_M \eps^{-1}\left[ \phi(x)- LO(\phi)_{(\eps T,x)}\right]d\mu + \int_M \phi d\lambda^+ - LO(\phi)_{(\eps T,x)} d\lambda^-  \\
  \geq  \sup_{\phi\in C^1(M)} \lim_{\eps\rightarrow  0} \int_M \eps^{-1}\left[ \phi(x)- LO(\phi)_{(\eps T,x)}\right]d\mu + \int_M \phi d\lambda^+ - LO(\phi)_{(\eps T,x)} d\lambda^- \\
   = \sup_{\phi, \psi\in C^1(M)} \int_M  -Th(x, d\phi) d\mu + \phi d\lambda := \hC_T(\lambda\|\mu)  \ .  \end{multline}
\end{proof}
\begin{proof} {\it of Lemma~\ref{lem63}}: \
We may describe the optimal mapping
 $S_{\eps T}:M\rightarrow M$ associated with $C_{\eps T}(\mu+\eps\lambda^+, \mu+\eps\lambda^-)$ in local coordinates on each chart. It is given by the solution to the Monge-Amp\`{e}re equation
\be\label{comM}det\nabla_x S_{\eps T}= \frac{\rho(x)(1+\eps q^-(x))}{\rho(S_{\eps T}(x)) (1+\eps Tq^+(S_{\eps T}(x))}
\ee
where
\be\label{cpsi} \nabla\psi=-\nabla_x C_{\eps T}(x, S_{\eps T}(x)) \  \ee
and
\be\label{evma}\ccC_{\eps T}\left(\mu+\eps\lambda^+, \mu+\eps\lambda^-\right) = \int_M C_{\eps T}(x, S_{\eps T}(x)) \rho(1+\eps Tq^-)dx\ee

We  recall that the inverse of $\nabla_x C_{\eps T}(x, \cdot)$ with respect to the second variable is $I_d+\eps T\nabla\psi$, to leading order in $\eps$. That is,
\be\label{closeT} \nabla_x C_{\eps T}\left(x, x+\eps T\partial_p h(x, \xi)+(\eps T)^2  Q(x,\xi,\eps)\right)=-\xi \ee
where (here and below) $Q$ is a generic  smooth function of its arguments.

 Hence, $S_{\eps T}$ can be expanded in $\eps$ in terms of $\psi$ as
\be\label{Tdef} S_{\eps T}(x)=x+\eps T h_\xi(x, \nabla\psi) + (\eps T)^2 Q(x, \nabla\psi, \eps) \  \ee

 We now expand the right side of (\ref{comM})  using (\ref{Tdef}) to obtain
  \be\label{second}1+ \eps T\left[ q^-(x)-q^+(x) - h_\xi(x, d\psi) \cdot \nabla_x\ln\rho(x)\right]+(\eps T)^2 Q(x, \nabla\psi, x,\eps)\ee
  while the left hand side is
  \be\label{third} det(\nabla_xS_{\eps T})=1+\eps T\nabla\cdot h_\xi(x, d\psi)+(\eps T)^2Q(x, \nabla\psi, \nabla\nabla\psi, x,\eps)\ee
  Comparing  (\ref{second}, \ref{third}),  divide by $\eps T$ and multiply by $\rho$ to obtain
  \be\label{fulleq}T\nabla\cdot \left(\rho h_\xi(x, d\psi)\right)= \rho(q^--q^+) + \eps T\rho Q(x, \nabla\psi, \nabla\nabla \psi, x,\eps) \ .
 \ee

 Now, we substitute $\eps=0$ and get a quasi-linear equation for $\psi_0$:
 \be\label{eq0}T\nabla\cdot \left(\rho h_\xi(x, d\psi_0)\right)= \rho(q^--q^+)  \ .
 \ee
 $\psi_0$ is a maximizer of
 $$ \hC_T(\lambda\|\mu)=\int_M \rho(q^+-q^-)\psi_0 -\int_M \rho Th(x, d\psi_0) dx$$
  By elliptic regularity, $\psi_0\in C^\infty(M)$. Multiply (\ref{eq0}) by $\psi_0$ and integrate over $M$ to obtain
  $$ \int_M\rho(q^+-q^-) = \int_M \rho Th_\xi(x, d\psi_0)\cdot \nabla \psi_0$$
   Then by the Lagrangian/Hamiltonian duality
    \be\label{go1}\hC_T(\lambda\|\mu)= \int_M \rho T\left[ \nabla\psi_0\cdot h_\xi(x, d\psi_0)-h(x, d\psi_0)\right] \equiv T\int_M \rho l\left(x, h_\xi(x, d\psi_0)\right) \ . \ee
   We observe $l\left( x, \frac{y-x}{T}\right) \geq T^{-1}C_T(x,y)$. So, (\ref{evma}) with (\ref{Tdef}) imply
    \be\label{go2}(\eps T )^{-1}\ccC_{\eps T}\left(\mu+\eps\lambda^+, \mu+\eps\lambda^-\right) \leq \int_M \rho(1+\eps Tq^-) l\left(x, h_\xi(x, \nabla\psi_\eps+ \eps T Q(x, \nabla\psi_\eps,\eps )\right)\ee
   where $\psi_\eps$ is a solution of (\ref{fulleq}). Now, if we show that  $\lim_{\eps\rightarrow 0} \psi_\eps=\psi_0$
   in $C^1(M)$ then,  from (\ref{go1}, \ref{go2})
   $$ \limsup_{\eps\rightarrow 0}(\eps)^{-1}\ccC_{\eps T}\left(\mu+\eps\lambda^+, \mu+\eps\lambda^-\right) \leq  T\int_M \rho l\left(x, h_\xi(x, d\psi_0)\right)= \hC(\lambda\|\mu) \ .  $$
   Next we show that, indeed, $\lim_{\eps\rightarrow 0} \psi_\eps=\psi_0$
   in $C^1(M)$.
   \par
  Substitute $\psi_\eps=\psi_0+\phi_\eps$ in (\ref{fulleq}). We obtain
    \be\label{firstl}\nabla\cdot( \sigma (x) \nabla\phi_\eps)= \eps Q(x, \nabla\phi_\eps, \nabla\nabla\phi_\eps,\eps) + \nabla\cdot\left( \rho\langle \nabla^t\phi_\eps,  \tilde{Q}(x, \nabla\phi, \eps)\cdot\nabla\phi_\eps\rangle\right)\ee
   where $\sigma:=Th_{\xi\xi}(x, \nabla\psi_0(x))$ is a positive definite form, while  $\tilde{Q}$ is a smooth matrix valued functions in both $x$ and $\eps$, determined by $\nabla\psi_0$ and $Q$ as given in (\ref{fulleq}).  A direct application of the implicit function theorem implies the existence of a  branch $(\lambda(\eps), \eta_\eps)$ of solutions for
      \be\label{secondl}\nabla\cdot( \sigma (x) \nabla\eta)= \eps Q(x, \nabla\eta, \nabla\nabla\eta,\eps) + \nabla\cdot\left( \rho\langle \nabla^t\eta,  \tilde{Q}(x, \nabla\eta, \eps)\circ\nabla\eta\rangle\right)+ \lambda(\eps)\ee
     where $\eta_0=\lambda(0)=0$ and $\eps\mapsto \eta_\eps$ is (at least)  continuous in $C^1(M)\perp 1$. Note that for $\eps\not=0$ we may have a non-zero $\lambda(\eps)$ which follows from projecting the
     right side on the equation to the Hilbert space perpendicular to constants (recall that $M$ is a compact manifold without boundary, and the left side is surjective on this space). We now show that $\eta_\eps=\phi_\eps$, i.e $\lambda(\eps)=0$ also for  $\eps\not=0$. Indeed, (\ref{firstl}) is equivalent to (\ref{comM}) multiplied by $\rho(x)/\eps$, so (\ref{secondl}) is equivalent to
     $$ det\nabla_x \hat{S}_{\eps T}= \frac{\rho(x)(1+\eps q^-(x))}{\rho(\hat{S}_{\eps T}(x)) (1+\eps q^+(\hat{S}_{\eps T}(x))}+ \eps\rho^{-1}(x)\lambda(\eps)$$
     where
          $\hat{S}_{\eps T}(x)$ obtained from (\ref{Tdef}) with $\psi_\eps:= \psi_0+\eta_\eps$.

     Hence
      \begin{multline}\label{ml}\int_M \left( \rho(\hat{S}_{\eps T}(x)) (1+\eps q^+(\hat{S}_{\eps T}(x))\right)det(\nabla_x\hat{S}_{\eps T}) = \int_M \left( \rho(x)(1+\eps q^-(x))\right) \\ +  \eps\lambda(\eps) \int_M \frac{\rho(\hat{S}_{\eps T}(x)) }{\rho(x)} (1+\eps q^+(\hat{S}_{\eps T}(x))\end{multline}
      However, $\hat{S}_{\eps T}(x)=x+O(\eps)$ is a diffeomorphism on $M$, so
      \begin{multline}\int_M \left( \rho(\hat{S}_{\eps T}(x)) (1+\eps q^+(\hat{S}_{\eps T}(x))\right)det(\nabla_x\hat{S}_{\eps T})=\int_M \left( \rho(\hat{S}_{\eps T}(x)) (1+Tq^+(\hat{S}_{\eps T}(x))\right)|det(\nabla_x\hat{S}_{\eps T})| \\ =
      \int_M  \rho(x) (1+\eps q^+(x))\equiv  \int_M  \rho(x) (1+\eps q^-(x)) \ .
      \end{multline}
      It follows that
      $$ \eps\lambda(\eps) \int_M \frac{ \rho(\hat{S}_{\eps T}(x)) }{\rho(x)}(1+\eps q^+(\hat{S}_{\eps T}(x))=0 \ . $$
      Since $\rho$ is positive everywhere  it follows that
       $\lambda(\eps)\equiv 0$ for $|\eps|$ sufficiently small. We proved that $\eta_\eps\equiv \phi_\eps$ and, in particular, $\phi_\eps\rightarrow 0$ as $\eps\rightarrow 0$ in $C^1\perp 1$, which implies the convergence of $\psi_\eps$ to $\psi_0$ at $\eps\rightarrow 0$ in $C^1\perp 1$.
 \end{proof}

\begin{proof}(of Lemma~\ref{lastlema})
 Given $\eps >0$ let
 \be \label{DEn} D_E^{\eps }(x,y):= \inf_{n\in \mathbb{N}}\left[ C_{\eps n T}(x,y)+\eps n E T\right] \ . \ee
 Evidently, $D^\eps_E(x,y)$ is continuous on $M\times M$ locally uniformly in $E\geq \uE$. Moreover,
 \be\label{limDE}\lim_{\eps \searrow 0}D^\eps_E= D_E\ee
 uniformly on $M\times M$ and locally uniformly in $E\geq \uE$ as well.
 \par
 We now decompose $M\times M$ into mutually disjoint Borel sets
$Q_{n}$:
$$ M\times M = \cup_n Q_{n}^{\eps}  \ , \ \ Q_{n}^{\eps}\cap Q_{E,n^{'}}^{\eps} =\emptyset \ \ \text{if} \ n\not= n^{'}$$ such that
 $$ Q_{n}^{\eps}\subset
 \left\{ (x,y)\in M\times M \ ;  \ \ D_E^{\eps }(x,y)= C_{\eps n T}(x,y)+\eps nE T\right\}  \ . $$

 Let $\Lambda^E_\eps\in {\cal P}(\lambda^+,\lambda^-)$ be an optimal plan for
  \be\label{calDen}{\cal D}^{\eps }_E(\lambda)= \int_{M\times M} D_E^{\eps }(x,y) d\Lambda^E_\eps= \min_{\Lambda\in {\cal P}(\lambda^+,\lambda^-)}\int_{M\times M} D_E^{\eps }(x,y) d\Lambda \ , \ee
 and $\Lambda_\eps^{n}= \Lambda^E_\eps\lfloor_{Q_{n}^\eps}$,  the restriction of $\Lambda^E_\eps$ to $Q^\eps_{n}$. Set  $\lambda^\pm_n$ to be the marginals of $\Lambda_\eps^{n}$
 on the first and second factors of $M\times M$. Then
 $\sum_{n=1}^\infty \Lambda_\eps^{n} = \Lambda^E_\eps$ and \be\label{sumlambda}  \sum_{n=1}^\infty \lambda^\pm_n = \lambda^\pm \ee
 \begin{remark}
 Note that $Q^\eps_{n}=\emptyset$ for all but a finite number of $n\in \mathbb{N}$. In particular, the sum
 (\ref{sumlambda}) contains only a finite number of non-zero terms.
 \end{remark}
 Let $|\lambda_n|:= \int_M d\lambda^\pm_n\equiv \int_{M\times M} d\Lambda_n^\eps$. The {\it averaged flight time} is
 \be\label{Tav}\langle T\rangle^\eps:= \eps T\sum_{n=1}^\infty n |\lambda_n| \ee
 We observe that $\langle T\rangle^\eps\in \partial_E{\cal D}_E^{\eps}(\lambda)$, where $\partial_E$ is the super gradient as a function of $E$.  At this stage we choose $E$ depending on $\eps, T$ such that
 \be\label{Teav} \langle T\rangle^\eps= T + 2\eps T |\lambda^\pm|\ee
We now apply Lemma~\ref{lsc}:
 Recalling Section~\ref{notation}-\ref{Exp}, let $\widehat{\Lambda}_\eps^{n}\in \oM(TM)$ satisfying \\  $\left(I\oplus Exp_{(l)}^{(t=\eps nT)}\right)_\# \widehat{\Lambda}_\eps^{n} = \Lambda_\eps^{n}$.
Use $\widehat{\Lambda}_\eps^{n}$ to define
 $\lambda_n^j:= \left(Exp_{(l)}^{(t=\eps nT)}\right)_\# \widehat{\Lambda}_\eps^{n}\in \oM(M)$ for $j=0,1\ldots n$. Note that
 \be\label{ec} \lambda_n^0=\lambda_n^+ \ \ \ ,   \lambda_n^n=\lambda_n^- \ . \ee
 By Lemma~\ref{partof2}
 \be\label{sumcn} \ccC_{\eps n T} (\lambda_n^+, \lambda_n^-)+\eps nE T|\lambda_n|= \sum_{j=0}^{n-1} \left[\ccC_{\eps T}(\lambda_n^j, \lambda^{j+1}_n)+ \eps ET|\lambda_n|\right]\ee
From (\ref{DEn}, \ref{calDen},  \ref{sumlambda}, \ref{sumcn}) and Lemma~\ref{partof}
\be\label{cnineq}{\cal D}^\eps_E(\lambda)= \sum_{n=1}^\infty \cD_E^\eps(\lambda_n)=   \sum_{n=1}^\infty \left[\ccC_{\eps nT}(\lambda_n^+, \lambda_n^-)+\eps nE T|\lambda_n|\right]=\sum_{n=1}^\infty \sum_{j=0}^{n-1}\left(\ccC_{\eps T}(\lambda_n^j, \lambda_n^{j+1})+ \eps E T|\lambda_n| \right) \ . \ee
Let now
$$ \mu^{\eps, E}=\eps\sum_{n=1}^\infty \sum_{j=1}^{n-1} \lambda_n^j \ \ . $$
Note that
$$ \mu^{\eps, E}=\eps\sum_{n=1}^\infty \sum_{j=0}^{n} \lambda_n^j-\eps\sum_{n=1}^\infty  \lambda_n^0
-\eps\sum_{n=1}^\infty  \lambda_n^n  \ . $$
By (\ref{sumlambda},\ref{ec}, \ref{Tav}) we obtain
\be\label{oom1} \left|\mu^{\eps, E}  \right| = \eps\sum_{n=1}^\infty (n+1)|\lambda_n^\pm|-2\eps |\lambda^\pm| = 1 \ \Longrightarrow
\mu^{\eps, E}\in \ooM  \ . \ee
By (\ref{sumlambda}, \ref{ec})
\begin{multline}\label{ttt}\sum_{n=1}^\infty \sum_{j=0}^{n-1}\ccC_{\eps T}(\lambda_n^j, \lambda_n^{j+1}) \geq \ccC_{\eps T}\left( \sum_{n=1}^\infty \sum_{j=0}^{n-1} \lambda_n^j, \sum_{n=1}^\infty \sum_{j=1}^{n} \lambda_n^{j+1}\right)= \eps^{-1}  \ccC_{\eps T}\left( \eps\sum_{n=1}^\infty \sum_{j=0}^{n-1} \lambda_n^j, \eps\sum_{n=1}^\infty \sum_{j=1}^{n} \lambda_n^{j+1}\right) \\ =\eps^{-1}  \ccC_{\eps T}\left(\mu^{\eps, E}+\eps\lambda^+, \mu^{\eps, E}+\eps\lambda^-\right) \ .
\end{multline}
From (\ref{Tav}, \ref{cnineq}, \ref{ttt} , \ref{oom1})
\be\label{sdter}{\cal D}^\eps_E(\lambda)- \langle T\rangle^\eps E \geq \eps^{-1}  \ccC_{\eps T}\left(\mu^{\eps, E}+\eps\lambda^+, \mu^{\eps, E}+\eps\lambda^-\right)\geq \eps^{-1}  \inf_{\mu\in \ooM}\ccC_{\eps T}\left(\mu+\eps\lambda^+, \mu+\eps\lambda^-\right) \ . \ee
Finally,  Theorem~\ref{th1}-3,  (\ref{limDE}, \ref{Teav}, \ref{sdter}) imply
$$ \hC_T(\lambda)\geq  {\cal D}_E(\lambda)- T E=  \lim_{\eps\rightarrow 0} {\cal D}^\eps_E(\lambda)- \langle T\rangle^\eps E\geq \limsup_{\eps\rightarrow 0}\eps^{-1}  \inf_{\mu\in \ooM}\ccC_{\eps T}\left(\mu+\eps\lambda^+, \mu+\eps\lambda^-\right) \ . $$
\end{proof}
\subsection{Proof of theorem~\ref{th3}}
From Theorem~\ref{th1}- (1) we get
$$\hC_{\eps T}(\eps\lambda) = \eps \hC_T(\lambda) \ . $$
We now apply  Lemma~\ref{lem60}, adapted to the case where $|\lambda^\pm|:= \int\lambda^\pm \not=1$. Then
$$ \ccC_T(\lambda^+,\lambda^-)=  |\lambda^\pm|\ccC_T\left(\frac{\lambda^+}{|\lambda^+|},\frac{\lambda^-}{|\lambda^-|}\right)\geq |\lambda^\pm|\hC_T\left(\frac{\lambda}{|\lambda^\pm|}\right)=\hC_{T/|\lambda^\pm|}\left(\lambda\right)  \ .  $$

Note that $\int d\mu+\eps d\lambda^\pm = 1+O(\eps)$, hence
$$ \eps^{-1}\ccC_{\eps T}\left( \mu+\eps\lambda^+, \mu+\eps\lambda^-\right) \geq  \hC_{T_\eps}(\lambda) $$
where $T_\eps\rightarrow T$ as $\eps\rightarrow 0$.
Hence
$$ \liminf_{\eps\rightarrow 0} \inf_{\ooM}\eps^{-1}\ccC_{\eps T}\left( \mu+\eps\lambda^+, \mu+\eps\lambda^-\right)
\geq \hC_T(\lambda) \ . $$
The Theorem follows from this and Lemma~\ref{lastlema}.

$\Box$
\subsection{Proof of Theorem~\ref{th6}}

 We have to show that for any $(\mu, \lambda)\in \ooM\times \mm$ and any sequence $(\mu_n, \lambda_n) \rightharpoonup (\mu, \lambda)$ as $n\rightarrow \infty$:
 \be\label{lem62too}\liminf_{n\rightarrow \infty} n \ccC_{T/n}\left( \mu_n+n^{-1}\lambda_n^+, \mu_n+n^{-1}\lambda_n^-\right)\geq \hC(\lambda\|\mu)\ee
and, in addition, {\it there exists}  a sequence $(\hat{\mu}_n, \hat{\lambda}_n) \rightharpoonup (\mu, \lambda)$ for which
 \be\label{gamze}\lim_{n\rightarrow \infty} n\ccC_{ T/n}\left( \hat{\mu}_n+n^{-1}\hat{\lambda}_n^+, \hat{\mu}_n+n^{-1}\hat{\lambda}_n^-\right)= \hC(\lambda\|\mu) \ . \ee
The inequality (\ref{lem62too}) follows directly from Lemma~\ref{lem62}. To prove (\ref{gamze}), we first consider the sequence $(\tilde{\mu}_n, \tilde{\lambda}_n)$ subjected to Lemma~\ref{lem61}.
From Lemma~\ref{lem63} and Lemma~\ref{lem61},
$$ \lim_{j\rightarrow\infty} \limsup_{n\rightarrow\infty}  \ n \ccC_{T/n} \left( \tilde{\mu}_j+ n^{-1}\tilde{\lambda}^+_j, \tilde{\mu}_j+ n^{-1}\tilde{\lambda}^-_j\right) \leq \lim_{j\rightarrow\infty} \hC_T\left(\tilde{\lambda}_j\|\tilde{\mu}_j\right)= \hC(\lambda\|\mu) \ . $$
So, there exists a  subsequence $j_n$ along  which
$$  \limsup_{n\rightarrow\infty}  \ n\ccC_{T/n} \left( \tilde{\mu}_{j_n}+ n^{-1}\tilde{\lambda}^+_{j_n}, \tilde{\mu}_{j_n}+ n^{-1}\tilde{\lambda}^-_{j_n}\right) \leq\hC(\lambda\|\mu) \ . $$
This, with (\ref{lem62too}), implies (\ref{gamze}).
\par
The second part of the theorem follows from (\ref{lem62too}) and Theorem~\ref{th3}.

$\Box$

\end{document}